\newtheorem{theorem}{Theorem}[section]
\newtheorem{cor}[theorem]{Corollary}
\newtheorem{prop}[theorem]{Proposition}
\newtheorem{fact}[theorem]{Fact}
\newtheorem{lemma}[theorem]{Lemma}
\newtheorem{claim}{Claim}[theorem]
\newtheorem*{Cor}{Corollary}
\newtheorem*{thma}{Theorem~A}
\newtheorem*{thmb}{Theorem~B}
\theoremstyle{definition}
\newtheorem{definition}[theorem]{Definition}
\newtheorem{q}[theorem]{Question}
\theoremstyle{remark}
\newtheorem{remark}[theorem]{Remark}
\newtheorem{remarks}[theorem]{Remarks}
\newtheorem*{uremark}{Remark}
\DeclareMathOperator{\cf}{cf}
\DeclareMathOperator{\im}{Im}
\DeclareMathOperator{\ns}{NS}
\DeclareMathOperator{\Tr}{Tr}
\DeclareMathOperator{\acc}{acc}
\DeclareMathOperator{\ord}{ORD}
\DeclareMathOperator{\otp}{otp}
\DeclareMathOperator{\reg}{Reg}
\DeclareMathOperator{\snr}{SNR}
\DeclareMathOperator{\prt}{\Pi}
\DeclareMathOperator{\prp}{\coprod}
\newcommand\s{\subseteq}
\newcommand\zfc{\textsf{\textup{ZFC}}}
\newcommand\diagonal{\bigtriangleup}
\newcommand\symdiff{\mathbin\triangle}
\renewcommand\mid{\mathrel{|}\allowbreak}
\title{Partitioning a reflecting stationary set}
\author{Maxwell Levine}
\address{Universit\"at Wien, Kurt G\"odel Research Center for Mathematical Logic, Austria}
\urladdr{http://www.logic.univie.ac.at/\textasciitilde levinem85}
\author{Assaf Rinot}
\address{Department of Mathematics, Bar-Ilan University, Ramat-Gan 5290002, Israel.}
\urladdr{http://www.assafrinot.com}
\thanks{The second author was partially supported by the European Research Council (grant agreement ERC-2018-StG 802756) and by the Israel Science Foundation (grant agreement 2066/18).}
\subjclass[2010]{Primary 03E05; Secondary 03E04}
\begin{document}
\begin{abstract} We address the question of whether a reflecting stationary set may be partitioned into two or more reflecting stationary subsets,
providing various affirmative answers in \zfc.
As an application to singular cardinals combinatorics, we infer that it is never the case that there exists a singular cardinal all of whose scales are very good.
\end{abstract}
\date{\today}
\maketitle
\section{Introduction}
A fundamental fact of set theory is \emph{Solovay's partition theorem} \cite{MR0290961} asserting that, for every stationary subset $S$ of a regular uncountable cardinal $\kappa$,
there exists a partition $\langle S_i\mid i<\kappa\rangle$ of $S$ into stationary sets.
The standard proof involves the analysis of a certain $C$-sequence over a stationary subset of $S$;
such a sequence exists in $\zfc$ (cf.~\cite{MR2519245}), but assuming the existence of better $C$-sequences, stronger partition theorems follow. For instance:
\begin{itemize}
\item (folklore) If $\kappa=\lambda^+$ and $\square_\lambda$ holds, then any stationary subset $S$ of $\kappa$ may be partitioned into non-reflecting stationary sets $\langle S_i\mid i<\kappa\rangle$.
That is, for all $i<\kappa$,  $S_i$ is stationary, but $\Tr(S_i):=\{\beta<\sup(S_i)\mid \cf(\beta)>\omega\ \&\ S_i\cap\beta\text{ is stationary in }\beta\}$ is empty.
\item \cite[Lemma~3.2]{paper18} If  $\square(\kappa)$ holds,
then for every stationary $S\s\kappa$, there exists a coherent $C$-sequence $\vec C=\langle C_\alpha\mid\alpha<\kappa\rangle$ such that $S_i:=\{ \alpha\in S\mid \min(C_\alpha)=i\}$ is stationary for all $i<\kappa$.
The coherence of $\vec C$ implies that the elements of $\langle \Tr(S_i)\mid i<\kappa\rangle$ are pairwise disjoint as well.
\item \cite[Theorem~1.24]{paper29} If $\square(\kappa)$ holds, then any fat subset of $\kappa$ may be partitioned into $\kappa$-many fat sets
that do not simultaneously reflect.
\end{itemize}
This raises the question of whether there is a fundamentally different way to partition large sets. A more concrete question reads as follows:

\begin{q}\label{q1} Suppose that $S$ is a subset of a regular uncountable cardinal $\kappa$ for which $\Tr(S)$ is stationary.
Can $S$ be split into sets $\langle S_i\mid i<\theta\rangle$ in such a way that $\bigcap_{i<\theta}\Tr(S_i)$ is stationary? And how large can $\theta$ be?
\end{q}
\begin{uremark}
Note that for any sequence $\langle S_i\mid i<\theta\rangle$ of pairwise disjoint subsets of $\kappa$, the intersection $\bigcap_{i<\theta}\Tr(S_i)$ is a subset of $E^\kappa_{\ge\theta}$.
Therefore, the only cardinals $\theta$ of interest are the ones for which $\kappa\setminus\theta$ still contains a regular cardinal.
\end{uremark}

The above question leads us to the following principle:
\begin{definition}\label{principle}  $\prt(S,\theta,T)$ asserts the existence of a partition $\langle S_i\mid i<\theta\rangle$ of $S$ such that  $T\cap\bigcap_{i<\theta}\Tr(S_i)$ is stationary.
\end{definition}

In Magidor's model \cite[\S2]{magidor}, $\prt(S,\aleph_1,T)$ holds for any two stationary subsets $S\s E^{\aleph_2}_{\aleph_0}$ and $T\s E^{\aleph_2}_{\aleph_1}$,
and it is also easy to provide consistent affirmative answers to Question~\ref{q1} without appealing to large cardinals.
However, the focus of this short paper is to establish that instances of the principle $\prt(\ldots)$ hold true in $\zfc$.
It is proved:

\begin{thma} Suppose that $\mu<\theta<\lambda$ are infinite cardinals, with $\mu,\theta$ regular.
\begin{enumerate}
\item If $\lambda$ is inaccessible, then $\prt(\lambda,\theta,\lambda)$  and $\prt(\lambda^+,\lambda,\lambda^+)$ both hold;
\item If $\lambda$ is regular, then $\prt(E^{\lambda^+}_\mu,\theta,E^{\lambda^+}_\theta)$ holds;
\item If $2^\theta\le\lambda$ and $\theta\neq\cf(\lambda)$, then $\prt(E^{\lambda^+}_\mu,\theta,E^{\lambda^+}_\theta)$ holds;
\item If $\lambda$ is singular, then $\prt(E^{\lambda^+}_\mu,\theta,E^{\lambda^+}_{\le\theta^{+3}})$ holds.
\end{enumerate}
\end{thma}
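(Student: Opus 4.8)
The plan is to extract from all four parts a single goal and feed it different combinatorial inputs. In every case it suffices to produce a function $g$ — from $E^{\lambda^+}_\mu$ to $\theta$ in parts~(2)--(4), and from $\lambda$ to $\theta$ (resp.\ from $\lambda^+$ to $\lambda$) in part~(1) — together with a stationary set $X$ of ordinals of cofinality $\theta$ (resp.\ of cofinality $\le\theta^{+3}$) such that for every $\beta\in X$ and every $i<\theta$ the fiber $g^{-1}(i)\cap\beta$ is \emph{stationary} in $\beta$: then $\langle g^{-1}(i)\mid i<\theta\rangle$ witnesses the relevant instance of $\prt$, with $X\s T\cap\bigcap_{i<\theta}\Tr(g^{-1}(i))$. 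Reflection of the ambient set is free, since $\cf(\beta)>\mu$ implies $E^\beta_\mu$ is stationary in $\beta$, so $\Tr(E^{\lambda^+}_\mu)$ already contains all of $E^{\lambda^+}_{>\mu}$, and likewise for $\Tr(\lambda)$; the whole content is to force the self-chosen pieces to reflect \emph{simultaneously}.

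I would build $g$ (say in part~(2)) as follows. Fix once and for all a partition $\langle Z_i\mid i<\theta\rangle$ of $E^\theta_\mu$ into sets stationary in $\theta$ (Solovay's theorem at $\theta$), and fix a club-guessing sequence $\langle c_\beta\mid\beta\in E^{\lambda^+}_\theta\rangle$ with each $c_\beta$ a club in $\beta$ of order-type $\theta$ (available since $\theta^+<\lambda^+$), with whatever extra coherence a Shelah-style postprocessing affords. Writing $\phi_\beta\colon\theta\to c_\beta$ for the increasing enumeration, note $\cf(\phi_\beta(\xi))=\cf(\xi)$ for $\xi\in\acc(\theta)$, so $\phi_\beta(\xi)\in E^{\lambda^+}_\mu$ whenever $\xi\in E^\theta_\mu$. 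Now define $g$ by processing the ordinals $\gamma\in E^{\lambda^+}_\mu$ in increasing order: at $\gamma$, the ``candidates'' are those $\beta$ for which $\gamma$ is an accumulation point of $c_\beta$ at a position in $E^\theta_\mu$, each such $\beta$ prescribing the value $i_\beta$ with $\otp(c_\beta\cap\gamma)\in Z_{i_\beta}$, and we set $g(\gamma)$ resolving these prescriptions (arbitrarily if there are no candidates). If for a stationary set $X$ of $\beta$'s the prescription of $\beta$ is honored for club-many positions, then for $\beta\in X$ the fiber $g^{-1}(i)\cap\beta$ contains $\phi_\beta[Z_i\cap D]$ for a club $D\s\theta$, which is stationary in $\beta$ because $\phi_\beta$ is continuous and $Z_i\cap D$ is stationary in $\theta$; so along $X$ we have literally transcribed the fixed Solovay partition of $\theta$ onto $c_\beta$, and we are done. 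The one place requiring care is that conflicting prescriptions must overrule only a non-stationary set of $\beta$'s, and this is exactly where the hypotheses enter: in parts~(1)--(2) one uses the coherence of the $\rho$-functions of walks on ordinals over $\lambda^+$ (resp.\ over the inaccessible $\lambda$) to confine conflicts to non-accumulation positions; in part~(3), the bound $2^\theta\le\lambda$ lets a pigeonhole over the at most $\lambda$ possible ``initial patterns'' do the same bookkeeping, while $\theta\neq\cf(\lambda)$ is the usual non-degeneracy hypothesis keeping the relevant stationary sets alive. The sub-case $\prt(\lambda^+,\lambda,\lambda^+)$ of~(1), where club guessing on $E^{\lambda^+}_\lambda$ is unavailable, is instead handled by running a continuous $\in$-chain of internally approachable models of length $\lambda$ and exploiting the inaccessibility of $\lambda$ directly.

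For part~(4), $\lambda$ singular, the strategy is to push the problem down to a regular cardinal below $\lambda$ via scales. Put $\sigma=\cf(\lambda)$, fix an increasing sequence $\langle\lambda_j\mid j<\sigma\rangle$ of regular cardinals above $\theta^{+3}$, cofinal in $\lambda$, and a scale $\langle f_\alpha\mid\alpha<\lambda^+\rangle$ in $\prod_j\lambda_j$. Since a scale has stationarily many good points (a $\zfc$ theorem of Shelah), one locates a stationary set of $\beta$ of cofinality in $[\theta,\theta^{+3}]$ and a $j^*<\sigma$ for which $\alpha\mapsto f_\alpha(j^*)$ is eventually strictly increasing and, on a club below $\beta$, continuous with supremum of cofinality in that same interval; applying~(1)/(2) to $\lambda_{j^*}$ produces a colouring of $\lambda_{j^*}$ whose $\theta$ fibers all reflect, and pulling it back along $\alpha\mapsto f_\alpha(j^*)$ colours $E^{\lambda^+}_\mu$ as required. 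One gets only reflection at $E^{\lambda^+}_{\le\theta^{+3}}$ rather than $E^{\lambda^+}_\theta$ precisely because in $\zfc$ one cannot demand the scale be \emph{very} good: a merely good point need not have all coordinates of its exact upper bound of cofinality $\theta$, and one must union over the finitely many cofinalities $\theta,\dots,\theta^{+3}$ that can occur — this slack being the very phenomenon behind the announced application that no singular cardinal has all of its scales very good.

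I expect the main obstacle throughout to be the upgrade from \emph{unbounded} to \emph{stationary} for the fibers. The naive approach — take $\beta=M\cap\lambda^+$ for $M$ a union of a short elementary chain and observe that each $g^{-1}(i)$ meets the successive intervals cut out by the chain — only yields unboundedness of the fibers below $\beta$, and it genuinely cannot do better when $\theta=\mu^+$, since then every proper initial segment of such a chain has cofinality $\le\mu$ and $E^{\lambda^+}_\mu$ reflects nowhere below it. What rescues the argument is running the chain \emph{inside} a guessed club and transcribing the fixed partition $\langle Z_i\rangle$ onto its enumeration, so that stationarity of the fibers is inherited from stationarity of the $Z_i$ in $\theta$; reconciling this with the existence of a single global $g$ is the delicate conflict-resolution step above, and is where each of the four hypotheses is actually spent.
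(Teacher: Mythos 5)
Your high-level template (a single colouring $h$ whose fibres are simultaneously stationary below a stationary set of test points, obtained by transcribing a fixed Solovay partition of $\theta$ along a club system) matches the paper's, and you correctly locate the difficulty in the conflict-resolution/stationarity-upgrade step. But at exactly that step the proposal has gaps in every nontrivial part. For part~(2), the ingredient you are missing is Shelah's \textsf{ZFC} theorem that for regular $\lambda$ the set $E^{\lambda^+}_{<\lambda}$ is a union of $\lambda$ many sets each carrying a \emph{partial square} (\cite[Lemma~4.4]{Sh:351}); coherence ($C_{\bar\alpha}=C_\alpha\cap\bar\alpha$ for $\bar\alpha\in\acc(C_\alpha)$) is what makes $\delta\mapsto\otp(C_\delta)$ well defined, so there are no conflicting prescriptions to resolve. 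An ordinary club-guessing sequence is not coherent and cannot be ``postprocessed'' into one, and the coherent sequences produced by walks on ordinals do not come with the guessing/reflection features you need here in \textsf{ZFC}; so as written the conflict-resolution step is asserted, not proved. (Part~(1), by contrast, needs none of this: for inaccessible $\lambda$ the partition into cofinality classes $\langle E^{\lambda^+}_\mu\mid\mu\in\reg(\lambda)\rangle$ already works verbatim, so the walks and internally approachable chains are both unnecessary and undeveloped.)

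For parts~(3) and~(4) the fatal step is the ``pull back along $\alpha\mapsto f_\alpha(j^*)$.'' Goodness of a point $\alpha$ only gives a \emph{cofinal} set $A\s\alpha$ on which $\gamma\mapsto f_\gamma(j^*)$ is increasing; the map is not continuous and not increasing on any club, so the preimage of a stationary subset of $\varepsilon=\sup_{\gamma\in A}f_\gamma(j^*)$ need not be stationary in $\alpha$. Repairing this is the actual content of the paper's argument: a transfer lemma producing a map $\psi_\zeta$ that is increasing on a club relative to the given stationary set, a dichotomy at each good point (either a two-sided density property of the fibres of $\gamma\mapsto f_\gamma(i^*)$, or injectivity on a stationary set), and, when $2^\nu>\lambda$, an Ulam matrix over the successor cardinal $\nu$ combined with the club-guessing bound $\mathcal C(\nu,\theta)\le\lambda$. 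None of this is present in your sketch, and your part~(3) does not invoke scales at all even though for singular $\lambda$ there is no partial square on $E^{\lambda^+}_{<\lambda}$ to fall back on. Finally, your explanation of the $\theta^{+3}$ in part~(4) is not the right one: at a good point of cofinality $\nu>\cf(\lambda)$ the exact upper bound does have all relevant coordinates of cofinality $\nu$; the $+3$ comes from needing a successor cardinal $\nu$ with $\theta^{++}<\nu$ (so that Shelah club-guessing yields $\mathcal C(\nu,\theta)=\nu\le\lambda$) while also avoiding $\nu=\cf(\lambda)$.
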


It is worth mentioning that our proof of Clause~(4) is indeed fundamentally different than all standard proofs for partitioning a stationary set.
We build on the fact that any singular cardinal admits a scale and that
the set of good points of a scale is stationary relative to any cofinality;
we also use a combination of club-guessing with Ulam matrices to avoid any cardinal arithmetic hypotheses.

\medskip

We initiated this project since we realized that $\zfc$ instances of $\prt(\ldots)$ would allow us to prove that the statement ``all scales are very good'' is inconsistent.
And, indeed, the following is an easy consequence of Theorem~A:
\begin{Cor} Suppose that $\lambda$ is a singular cardinal, and $\vec\lambda$ is a sequence of a regular cardinals of length $\cf(\lambda)$, converging to $\lambda$. If $\prod\vec\lambda$ carries a scale, then it also carries a scale which is not very good.
\end{Cor}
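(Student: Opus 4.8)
Recall that a point $\alpha<\lambda^+$ of cofinality greater than $\cf(\lambda)$ is a \emph{very good point} of a scale $\vec f=\langle f_\beta\mid\beta<\lambda^+\rangle$ in $\prod\vec\lambda$ precisely when there are a \emph{club} $C\s\alpha$ and an $i<\cf(\lambda)$ with $\langle f_\beta(j)\mid\beta\in C\rangle$ strictly increasing for every $j\in[i,\cf(\lambda))$, and that $\vec f$ is \emph{very good} when club-many $\alpha\in E^{\lambda^+}_{>\cf(\lambda)}$ are very good points. Since the set of \emph{good} points of any scale is stationary at every cofinality, one cannot produce a scale all of whose points are bad; the plan is instead to manufacture, out of the given scale $\vec f$, a scale $\vec g$ with \emph{stationarily} many non-very-good points of cofinality $>\cf(\lambda)$. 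We may assume at the outset that $\vec f$ is itself very good, for otherwise there is nothing to do.

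First I would invoke Theorem~A(4). Put $\mu:=\cf(\lambda)$ and fix a regular cardinal $\theta$ with $\cf(\lambda)<\theta$ and, say, $\theta^{+4}<\lambda$; such a $\theta$ exists because $\lambda$ is a singular limit cardinal. Theorem~A(4) then yields a partition $\langle S_i\mid i<\theta\rangle$ of $E^{\lambda^+}_\mu$ for which $T:=E^{\lambda^+}_{\le\theta^{+3}}\cap\bigcap_{i<\theta}\Tr(S_i)$ is stationary. By the Remark following Question~\ref{q1}, every $\alpha\in T$ satisfies $\theta\le\cf(\alpha)\le\theta^{+3}$, in particular $\cf(\alpha)>\cf(\lambda)$, and — the decisive point — for every club $C\s\alpha$ and every $i<\theta$ the set $C\cap S_i$ is stationary in $\alpha$. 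This \emph{simultaneous} reflection of all $\theta$ pieces is exactly what ordinary Solovay‑style partitioning fails to deliver, and it is the only feature of Theorem~A(4) that the rest of the argument consumes.

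Next I would define $\vec g$ by a partition‑driven surgery on the cofinality‑$\cf(\lambda)$ points. For $\beta\in S_i$ fix a cofinal $e_\beta\s\beta$ of order type $\cf(\lambda)$ together with a continuous cofinal $\langle\delta^\beta_\xi\mid\xi<\cf(\lambda)\rangle$ in $\beta$, set $D_\beta(\xi):=\sup\{f_\gamma(\xi)\mid\gamma\in e_\beta\setminus\delta^\beta_\xi\}$, and modify the scale so that the class index $i$ sits in a high‑order ``digit'' protected by a multiplicative buffer, e.g.
\[
g_\beta(\xi):=\max\{f_\beta(\xi),\,D_\beta(\xi)\}\cdot\theta^{+4}+i
\]
for all $\xi$ large enough that $\lambda_\xi>\theta^{+4}$, and $g_\beta(\xi):=f_\beta(\xi)$ on the bounded remainder; for $\beta\notin E^{\lambda^+}_\mu$ put $g_\beta(\xi):=f_\beta(\xi)\cdot\theta^{+4}$ on the same tail. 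A routine check — the buffer absorbs the summand $i<\theta^{+4}$, so $\vec g$ stays $<^*$‑increasing; $g_\beta\ge^* f_\beta$ keeps it cofinal; and a cardinality count keeps every $g_\beta$ inside $\prod\vec\lambda$ — shows that, after a re‑enumeration if necessary, $\vec g$ is a scale in $\prod\vec\lambda$.

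It then remains to show that no $\alpha\in T$ is a very good point of $\vec g$, which suffices since $T$ is stationary. Supposing $\alpha\in T$ were very good, witnessed by a club $C\s\alpha$ and a coordinate $j_0$, for each $j\ge j_0$ the sequence $\langle g_\beta(j)\mid\beta\in C\rangle$ is strictly increasing of order type $\cf(\alpha)$, so $h(j):=\sup_{\beta\in C}g_\beta(j)$ has cofinality $\cf(\alpha)\le\theta^{+3}$; on the other hand, writing $g_\beta(j)=u^j_\beta\cdot\theta^{+4}+i(\beta)$ with $u^j_\beta$ the ``floor'' and $i(\beta)<\theta$ the index of the $S$‑class of $\beta$, strict monotonicity along $C$ forces $\beta\mapsto u^j_\beta$ to be weakly increasing and $\beta\mapsto i(\beta)$ to be strictly increasing on each fibre $\{\beta\in C\mid u^j_\beta=u\}$, so — using that $C$ meets each of the $\theta$ classes stationarily and that the $D_\beta$'s were diagonal suprema over cofinal sets of order type $\cf(\lambda)$ — the cofinality of $h(j)$ is pushed outside the window $[\theta,\theta^{+3}]$ (it is driven either by the $\theta$‑bounded colourings of a fibre, or by $\cf(\lambda)$, or by the block‑structure modulus $\theta^{+4}$), contradicting $\cf(h(j))=\cf(\alpha)$. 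This last implication is the hard part of the proof: extracting from ``the witnessing club threads all $\theta$ reflecting classes'' enough control over the ordinal arithmetic to locate $\cf(h(j))$ off the interval $[\theta,\theta^{+3}]$, which is precisely where the $\theta$‑fold reflection of Theorem~A(4) — rather than the reflection of a single stationary set — is indispensable, and where the boundary case ``number of classes $=\cf(\alpha)$'' must be defeated (presumably by observing that $T$ may be taken to reflect at a cofinality strictly above $\theta$, or by a suitable adjustment of the surgery). The remaining obstacle is the purely bookkeeping one of arranging the diagonal surgery so that $\vec g$ is a bona fide scale, paid for, as usual, by a re‑enumeration.
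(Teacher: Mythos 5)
Your first move is the right one --- the only content of the corollary really is an instance of $\prt(\lambda^+,\cf(\lambda),E^{\lambda^+}_{>\cf(\lambda)})$ extracted from Theorem~A(4) (coarsening the $\theta$-piece partition of $E^{\lambda^+}_\mu$ to $\cf(\lambda)$ pieces and extending it trivially to all of $\lambda^+$), and the paper does exactly this. But from that point on there is a genuine gap. The ``surgery'' you propose is both technically broken and, more importantly, aimed at the wrong kind of contradiction. Technically: with ordinal arithmetic, right multiplication $u\mapsto u\cdot\theta^{+4}$ is only weakly increasing (e.g.\ $(u+1)\cdot\theta^{+4}=u\cdot\theta^{+4}$ for infinite $u$), so your buffer does not absorb the summand $i$ and the claim that $\vec g$ stays $<^*$-increasing fails as written. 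More importantly: your verification that no $\alpha\in T$ is very good is exactly the step you flag as ``the hard part,'' and it is not carried out --- the assertion that $\cf(h(j))$ gets ``pushed outside the window $[\theta,\theta^{+3}]$'' is a heuristic, not an argument, and a strictly increasing sequence along $C$ is perfectly compatible with its terms repeating a low-order digit, so no contradiction is visible from the block decomposition alone.

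The missing idea (Cummings--Foreman, reproduced as Proposition~\ref{prop25} in the paper) is to aim not for a cofinality obstruction but for a \emph{collision}: index the pieces of the partition by the coordinates $i<\cf(\lambda)$ of the product itself, and define
$g_\gamma(i):=0$ if $\gamma\in S_i$, and $g_\gamma(i):=f_\gamma(i)$ otherwise. Each $g_\gamma$ differs from $f_\gamma$ at a single coordinate, so $\vec g$ is still a scale in $\prod\vec\lambda$ with no re-enumeration needed. Now fix $\alpha\in T\s\bigcap_{i<\cf(\lambda)}\Tr(S_i)$ with $\cf(\alpha)>\cf(\lambda)$, a club $C\s\alpha$ and a coordinate $i$: since $C\cap S_i$ is stationary in $\alpha$, it contains two ordinals $\gamma<\delta$, and $g_\gamma(i)=0=g_\delta(i)$, so $\langle g_\beta(i)\mid\beta\in C\rangle$ is not strictly increasing. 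As this holds for every $C$ and every $i$, the point $\alpha$ is not very good, and $T$ is stationary. This replaces your entire diagonal-supremum construction and the unproved cofinality analysis with two lines; the $\cf(\lambda)$-fold simultaneous reflection is consumed only in guaranteeing a collision at \emph{every} coordinate on \emph{every} club through $\alpha$.
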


In this short paper, we also consider a simultaneous version of the principle $\prt(\ldots)$ which is motivated by a simultaneous version of Solovay's partition theorem recently obtained by Brodsky and Rinot:
\begin{lemma}[{\cite[Lemma~1.15]{paper29}}]\label{lemma13} Suppose that
$\langle S_i\mid i<\theta\rangle$ is a sequence of stationary subsets of a regular uncountable cardinal $\kappa$, with $\theta\le\kappa$.
Then there exists a sequence $\langle S_i'\mid i\in I\rangle$ of pairwise disjoint stationary sets such that:
\begin{itemize}
\item  $S_i'\s S_i$ for every $i\in I$;
\item $I$ is a cofinal subset of $\theta$.\footnote{Note that if we demand that $I$ be equal to $\theta$, then we do not get a theorem in $\zfc$.
For instance, if the nonstationary ideal on $\omega_1$ is $\omega_1$-dense \cite[Theorem~6.148]{MR2723878}, then we could
let $\langle S_i\mid i<\omega_1\rangle$ be a non-injective enumeration of a dense subset of $\ns_{\omega_1}$.} 
\end{itemize}
\end{lemma}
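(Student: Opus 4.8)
The plan is to work modulo the nonstationary ideal and to exploit the crucial slack of being allowed to pass to a cofinal subset of $\theta$. First I would make two reductions. Since a cofinal subset of a cofinal subset of $\theta$ is cofinal in $\theta$, by replacing $\langle S_i\mid i<\theta\rangle$ with $\langle S_{c}\mid c\in C\rangle$ for a cofinal $C\s\theta$ of order-type $\cf(\theta)$, I may assume that $\theta$ is a regular cardinal; note that then ``$I$ cofinal'' just means ``$I$ unbounded''. Secondly, combining Solovay's partition theorem with Fodor's lemma I would fix once and for all a regressive map $f$ from the limit ordinals below $\kappa$ into $\kappa$ all of whose fibers $R_\zeta:=f^{-1}(\zeta)$ are stationary — starting from any $f$ with all $\kappa$ fibers stationary and resetting $f(\alpha)$ to $0$ whenever $f(\alpha)\ge\alpha$ — thereby obtaining a partition $\langle R_\zeta\mid\zeta<\kappa\rangle$ of a club into stationary sets, each of which is moreover co-stationary.

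The engine is a \emph{concentration lemma}, proved by a diagonal intersection: for a stationary $S$ and $Z\s\kappa$ with $|Z|<\kappa$, if $S\cap R_\zeta$ is nonstationary for every $\zeta\notin Z$, then $S\s\bigcup_{\zeta\in Z}R_\zeta$ modulo a club. Indeed, letting $C_\zeta$ be a club disjoint from $S\cap R_\zeta$ for $\zeta\notin Z$ and $C_\zeta:=\kappa$ otherwise, for a limit $\alpha\in S\cap\diagonal_{\zeta<\kappa}C_\zeta$ we have $\alpha\in R_{f(\alpha)}$ with $f(\alpha)<\alpha$, so $f(\alpha)\in Z$ lest $\alpha\in C_{f(\alpha)}\cap S\cap R_{f(\alpha)}=\emptyset$. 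Equivalently: whenever $S$ is stationary, $|Z|<\kappa$, and $S\not\s\bigcup_{\zeta\in Z}R_\zeta$ mod club, some $\zeta\notin Z$ has $S\cap R_\zeta$ stationary (taking $Z=\emptyset$ shows every stationary set meets some $R_\zeta$ stationarily).

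Now I would run a recursion of length $\theta$ building an increasing sequence $\langle i_\xi\mid\xi<\theta\rangle$ with $i_\xi\ge\xi$, together with distinct ordinals $\zeta_\xi<\kappa$ and the stationary sets $S'_{i_\xi}:=S_{i_\xi}\cap R_{\zeta_\xi}$ (pairwise disjoint, since the $R_\zeta$'s are). At stage $\xi$, with $Z_\xi:=\{\zeta_\eta\mid\eta<\xi\}$ of size $<\theta\le\kappa$ and $\gamma:=\max(\xi,\sup_{\eta<\xi}i_\eta+1)<\theta$, the concentration lemma lets me perform the step provided some $i\in[\gamma,\theta)$ has $S_i\not\s\bigcup_{\zeta\in Z_\xi}R_\zeta$ mod club. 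If instead every such $S_i$ \emph{is} concentrated mod club on the $<\theta$ pieces indexed by $Z_\xi$, then (using $\kappa$-completeness of $\ns$, as $|Z_\xi|<\kappa$) each such $S_i$ meets some $R_\zeta$, $\zeta\in Z_\xi$, stationarily; as $\theta$ is regular and $|Z_\xi|<\theta$, fix $\zeta^*\in Z_\xi$ and an unbounded $J\s\theta$ with $S_i\cap R_{\zeta^*}$ stationary for all $i\in J$. The problem now reduces to $\langle S_i\cap R_{\zeta^*}\mid i\in J\rangle$, a family of stationary subsets of the \emph{co-stationary} set $R_{\zeta^*}$ indexed by an unbounded $J\cong\theta$, and one iterates inside $R_{\zeta^*}$ relative to a fresh partition of $R_{\zeta^*}$ into stationary co-stationary pieces; each ``stuck'' step strictly shrinks the host set in $P(\kappa)/\ns$.

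The main obstacle is precisely this last point: one must show the ``stuck'' alternative cannot recur forever, so that the recursion halts and delivers the unbounded $I$. Put differently — and this is the delicate heart of the matter — one must rule out an ``everywhere entangled'' family, for which no matter how far out one goes or how one refines, the tail of the $S_i$'s keeps concentrating on a single stationary piece, all the while remembering that $I=\theta$ is genuinely unattainable (cf.\ the footnote). I expect the fix to be a bookkeeping device that preserves unboundedness of the surviving index set at limit stages of the iteration — for instance, always descending along a single pre-chosen Solovay decomposition of a fixed tail set $S_{i_0}$, arranged so that the nested host sets have nonstationary intersection and a halt is forced — or, alternatively, via the club-guessing/Ulam-matrix machinery used elsewhere in the paper. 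Finally, once the recursion stops with an unbounded $I$ and choices that are pairwise disjoint modulo $\ns$, a closing diagonal-intersection argument (using a suitable pairing function on $\kappa$) thins them to literal pairwise disjointness.
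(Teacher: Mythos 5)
This lemma is imported from \cite[Lemma~1.15]{paper29}; the present paper contains no proof of it, so your proposal has to stand on its own. Up to the point where you yourself flag ``the main obstacle,'' what you write is correct: the reduction to regular $\theta$, the regressive $f$ with stationary, pairwise disjoint, co-stationary fibers $R_\zeta$, the concentration lemma via $\diagonal_{\zeta<\kappa}C_\zeta$, and the successor step of the recursion (which, if it survives to stage $\theta$, does deliver a cofinal $I$ with literally disjoint $S_{i_\xi}\cap R_{\zeta_\xi}$) are all fine. But the obstacle you flag is the entire content of the lemma, and your proposal does not overcome it.

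Concretely, the gap is termination of the outer iteration. When the recursion gets stuck at some $\xi<\theta$ with $\{i_\eta\mid\eta<\xi\}$ bounded in $\theta$, you pass to a host $R_{\zeta^*}$ and restart with a fresh partition. The observation that each stuck step ``strictly shrinks the host set in $P(\kappa)/\ns$'' yields no termination, because $P(\kappa)/\ns$ is not well-founded: there are infinite strictly decreasing chains of stationary sets, so nothing prevents the stuck alternative from recurring $\omega$ many times, after which the nested hosts $H_0\supseteq H_1\supseteq\cdots$ may well have nonstationary intersection and the process dies without having produced a cofinal $I$. Nor can one derive a contradiction from an infinite run: in the stuck case each tail set $S_i$ is only shown to concentrate on a union of fewer than $\theta$ many pieces, not on the single piece $R_{\zeta^*}$ chosen as the next host, so an infinite run does not force any $S_i$ into $\bigcap_n H_n$. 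The two fixes you gesture at (a bookkeeping device, or Ulam matrices) are exactly where the missing idea lives and are not worked out. The way out is to make the splitting device canonical in $\alpha$ rather than fixing one partition of $\kappa$ in advance: e.g., shrink each $S_i$ to a fixed cofinality $\mu_i$, fix a single ladder $\langle\alpha_\eta\mid\eta<\cf(\alpha)\rangle$ for each $\alpha$, run Solovay's pressing-down argument to obtain for each $i$ an index $\eta_i<\mu_i$ such that $\alpha\mapsto\alpha_{\eta_i}$ assumes arbitrarily large values on stationary subsets of $S_i$, stabilize the relevant parameters on a cofinal $I\s\theta$ (indices with distinct $\mu_i$ give automatically disjoint fibers, since they live in disjoint cofinality classes), and then pick pairwise distinct Fodor values. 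That route produces all the $S_i'$ in one pass from a single regressive function and has no termination issue; as written, your argument establishes the lemma only in the case where the recursion never gets stuck.
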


Evidently, Solovay's theorem follows  by invoking the preceding theorem with a constant sequence of length $\theta=\kappa$.

The simultaneous version of Definition~\ref{principle} reads as follows.
\begin{definition}\label{principle2}  $\prp(S,\nu,T)$ asserts
that for every $\theta\le\nu$, every sequence $\langle S_i\mid i<\theta\rangle$ of subsets of $S$
and every stationary $T'\s T\cap\bigcap_{i<\theta}\Tr(S_i)$,
there exists a sequence $\langle S_i'\mid i\in I\rangle$ of pairwise disjoint stationary sets such that:
\begin{itemize}
\item  $S_i'\s S_i$ for every $i\in I$;
\item $I$ is a cofinal subset of $\theta$;
\item $T'\cap \bigcap_{i\in I}\Tr(S'_i)$ is stationary.
\end{itemize}
\end{definition}

It is proved:

\begin{thmb} Suppose that $\nu<\lambda$ are uncountable cardinals with $\nu\neq\cf(\lambda)$, and $2^\nu\le\lambda$.
Then, any of the following implies that $\prp(\lambda^+,\nu,E^{\lambda^+}_\nu)$ holds:
\begin{enumerate}
\item $\lambda$ is a regular cardinal;
\item $\lambda$ is a singular cardinal admitting a very good scale.
\end{enumerate}
\end{thmb}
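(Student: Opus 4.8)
The plan is to verify Definition~\ref{principle2} directly. So fix $\theta\le\nu$, a sequence $\langle S_i\mid i<\theta\rangle$ of subsets of $\lambda^+$, and a stationary $T'\s E^{\lambda^+}_\nu\cap\bigcap_{i<\theta}\Tr(S_i)$; note each $S_i$ is then stationary. I may assume $\theta$ is infinite (the finite case follows from Solovay's theorem applied inside a reflection point) and, after passing to a cofinal subset of $\theta$ of order type $\cf(\theta)$, that $\theta$ is regular. The first move is a \emph{localization}. For each $\beta\in T'$, fix a closed cofinal $c_\beta\s\beta$ of order type $\nu$ with increasing enumeration $e_\beta\colon\nu\to\beta$; since $S_i\cap\beta$ is stationary in $\beta$ and $e_\beta$ is continuous and cofinal, $W^\beta_i:=e_\beta^{-1}[S_i]$ is stationary in $\nu$ for every $i<\theta$. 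Applying Lemma~\ref{lemma13} inside $\nu$ to $\langle W^\beta_i\mid i<\theta\rangle$ yields a cofinal $I_\beta\s\theta$ and pairwise disjoint stationary sets $\langle Z^\beta_i\mid i\in I_\beta\rangle$ with $Z^\beta_i\s W^\beta_i$. Since $2^\theta\le 2^\nu\le\lambda<\lambda^+$, I would fix a stationary $T^*\s T'$ and a cofinal $I\s\theta$ with $I_\beta\equiv I$ on $T^*$, and put $X^\beta_i:=e_\beta[Z^\beta_i]$ for $\beta\in T^*$, $i\in I$; then $\langle X^\beta_i\mid i\in I\rangle$ is a sequence of pairwise disjoint stationary subsets of $\beta$, each contained in $S_i$.

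It then suffices to \emph{amalgamate}: to find pairwise disjoint stationary $\langle S'_i\mid i\in I\rangle$ with $S'_i\s S_i$ such that $\{\beta\in T^*\mid S'_i\cap\beta$ is stationary in $\beta$ for every $i\in I\}$ is stationary, for then this set witnesses that $T'\cap\bigcap_{i\in I}\Tr(S'_i)$ is stationary, and $I$ is cofinal in $\theta$, as required. The obstruction, and the whole point of the theorem, is that a single ordinal $\gamma$ may lie in $X^\beta_i$ for one $\beta$ and in $X^{\beta'}_{i'}$ for another with $i\neq i'$, so $\bigcup_{\beta\in T^*}X^\beta_i$ need not be pairwise disjoint; one must commit to exactly one color per $\gamma$ in a way that still respects the local partitions along stationarily many $\beta$'s. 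This is where the two hypotheses enter.

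For clause~(1), $\lambda$ is regular and $\nu<\lambda$, so by Shelah's club-guessing theorem I may thin $T^*$ and assume moreover that $\{\beta\in T^*\mid c_\beta\s E\}$ is stationary for every club $E\s\lambda^+$. Since a coherent $C$-sequence on $\lambda^+$ is not available in \zfc, I would substitute for it an Ulam matrix on $\lambda^+$ (which exists in \zfc) together with the hypothesis $2^\nu\le\lambda$ in order to route the mass of each $X^\beta_i$ through a coordinate determined by $i$ alone: this should produce a function $j$, defined on a club-sized subset of $\lambda^+$ with values in $I$, such that for stationarily many $\beta\in T^*$ and all $i\in I$ the set $\{\gamma\in X^\beta_i\mid j(\gamma)=i\}$ is stationary in $\beta$; one then takes $S'_i:=\{\gamma\in S_i\mid j(\gamma)=i\}$. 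This bookkeeping is a variant of the proof of Lemma~\ref{lemma13}, now carrying along the reflection points. For clause~(2), $\lambda$ is singular with a very good scale $\vec f=\langle f_\alpha\mid\alpha<\lambda^+\rangle$. Since $\nu\neq\cf(\lambda)$, every $\alpha\in E^{\lambda^+}_\nu$ is a very good point of $\vec f$ — automatically when $\nu<\cf(\lambda)$, because a $<^*$-increasing sequence of length $<\cf(\lambda)$ is pointwise increasing past some coordinate $<\cf(\lambda)$, and by definition of ``very good'' when $\nu>\cf(\lambda)$. Consequently, after shrinking to a club one may extract from $\vec f$ a coherent sequence of clubs $\langle c_\alpha\rangle$ with $\otp(c_\alpha)=\cf(\alpha)$, so that the position map $\gamma\mapsto\otp(c_\gamma)$ is well defined on a club-sized set, and one again secures club-guessing (here $\nu^+<\lambda$, as $\lambda$ is singular). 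Feeding this coherent structure in place of the Ulam matrix into the same routing argument should produce the desired $\langle S'_i\mid i\in I\rangle$.

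In either case the outcome is as required: the $S'_i$ are pairwise disjoint, $S'_i\s S_i$, each $S'_i$ is stationary since its trace is, $I$ is cofinal in $\theta$, and $T'\cap\bigcap_{i\in I}\Tr(S'_i)$ is stationary; as $\theta$, $\langle S_i\mid i<\theta\rangle$ and $T'$ were arbitrary, $\prp(\lambda^+,\nu,E^{\lambda^+}_\nu)$ holds. The step I expect to be hardest is the amalgamation: converting the pointwise refinements supplied by Lemma~\ref{lemma13} into globally pairwise disjoint sets without destroying reflection on a stationary part of $T'$ — handled via an Ulam matrix together with $2^\nu\le\lambda$ when $\lambda$ is regular, and via the coherence of the club sequence read off from a very good scale when $\lambda$ is singular, with the club-guessing property of $\langle c_\beta\rangle$ ensuring that stationarily many good reflection points survive the routing.
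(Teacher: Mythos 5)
Your localization step is exactly the paper's opening move (Theorem~\ref{thm43} applies Lemma~\ref{lemma13} to the pushforward of $\langle S_i\mid i<\theta\rangle$ along the collapse of a club $c_\alpha\s\alpha$), and you have correctly isolated the amalgamation as the crux. But the amalgamation is precisely where your proposal has a genuine gap, in both clauses. The missing idea is the principle $\snr^-(\lambda^+,\nu,E^{\lambda^+}_\nu)$: a \emph{single, globally defined} function $\varphi:\lambda^+\rightarrow\nu$ which, for stationarily many $\alpha\in T'$, is strictly increasing (hence injective) on some club in $\alpha$. Once such a $\varphi$ exists, the routing is soft: inside each good $\alpha$, the local pairwise-disjoint refinement supplied by Lemma~\ref{lemma13} can be encoded, via the injectivity of $\varphi$ on $c_\alpha$, as a function $h_\alpha\in{}^{\nu}\theta$ with $h_\alpha(\varphi(\delta))=i$ on a stationary subset of $S_i\cap\alpha$ for each $i\in I_\alpha$; since there are only $\theta^\nu\le 2^\nu\le\lambda$ candidates, one stabilizes $h_\alpha=h$ on a stationary $T_2$ and sets $S_i':=\{\delta\in S_i\mid h(\varphi(\delta))=i\}$. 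Note that stabilizing only the index sets $I_\beta$, as you do, is not enough: two reflection points can disagree about which color an individual ordinal should receive, and it is the full coloring functions $h_\alpha$ that must be stabilized.

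Your proposed substitutes for $\varphi$ do not deliver it. In clause~(1) you invoke an Ulam matrix on $\lambda^+$; but an Ulam matrix on $\lambda^+$ carries no information about stationarity \emph{inside} ordinals $\beta$ of cofinality $\nu<\lambda$, and "this should produce a function $j$" is not an argument. The paper instead obtains $\varphi$ for regular $\lambda$ from Shelah's theorem that $E^{\lambda^+}_{<\lambda}$ is a union of $\lambda$ many sets carrying partial squares (as in Theorem~\ref{thm38}): after stabilizing the order type, $\varphi(\alpha):=\otp(C_\alpha)$ is increasing on $\acc(C_\alpha)$ by coherence. In clause~(2) you assert that a very good scale yields a coherent sequence of clubs $\langle c_\alpha\rangle$ so that $\gamma\mapsto\otp(c_\gamma)$ is well defined; this is unjustified --- very goodness gives no coherence among the witnessing clubs of different $\alpha$'s. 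What it does give (Proposition~\ref{prop45}) is that, after stabilizing the coordinate $i$ and the supremum $\zeta$ of $\langle f_\delta(i)\mid\delta\in c_\alpha\rangle$, the single function $\delta\mapsto\psi_\zeta(f_\delta(i))$ (with $\psi_\zeta$ from Lemma~\ref{localtoglobal}) is increasing on a club in each relevant $\alpha$, which is exactly the required $\varphi$. So the architecture of your proof is right, but the load-bearing object --- the global injective-on-clubs function --- is never constructed, and the tools you name in its place (an Ulam matrix on $\lambda^+$; a coherent club sequence read off a very good scale) would not produce it.
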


\subsection*{Notation and conventions} For cardinals $\theta<\kappa$, we let $E^\kappa_\theta:=\{\alpha<\kappa\mid\cf(\alpha)=\theta\}$; $E^\kappa_{\neq\theta}$, $E^\kappa_{>\theta}$, $E^\kappa_{\ge\theta}$ and $E^\kappa_{\le\theta}$ are defined similarly.
For a set of ordinals $a$, we write $\acc^+(a) := \{\alpha < \sup(a) \mid \sup(a \cap \alpha) = \alpha > 0\}$ and $\acc(a) := a \cap \acc^+(a)$.
The class of all ordinals is denoted by $\ord$. We also let $\reg(\kappa):=\{\theta<\kappa\mid \aleph_0\le\cf(\theta)=\theta\}$.

\section{\textit{pcf} scales}
In this section, we recall the notion of a scale for a singular cardinal (also known as a \textit{pcf scale}) and the classification of points of a scale.
These concepts will play a role in the proof of Theorems A and B.
In turn, we also present an application of the partition principle $\prt(\ldots)$ to the study of very good scales.

\begin{definition} Suppose that $\lambda$ is a singular cardinal, and $\vec\lambda=\langle\lambda_i\mid i<\cf(\lambda)\rangle$ is a strictly increasing sequence of regular cardinals, converging to $\lambda$.
For any two functions $f,g\in\prod\vec\lambda$ and $i<\cf(\lambda)$, we write $f<^ig$ iff, for all $j\in\cf(\lambda)\setminus i$, $f(j)<g(j)$.
We also write $f<^* g$ to expresses that $f<^i g$ for some $i<\cf(\lambda)$.
\end{definition}
\begin{definition}\label{def22} Suppose that $\lambda$ is a singular cardinal.
 A sequence $\vec f=\langle f_\gamma\mid\gamma<\lambda^+\rangle$ is said to be a \emph{scale for $\lambda$} iff there exists a sequence $\vec \lambda$ as in the previous definition, such that:
\begin{itemize}
\item for every $\gamma<\lambda^+$, $f_\gamma\in\prod\vec\lambda$;
\item for every $\gamma<\delta<\lambda^+$, $f_\gamma<^*f_\delta$;
\item for every $g\in\prod\vec\lambda$, there exists $\gamma<\lambda^+$ such that $g<^*f_\gamma$.
\end{itemize}

 An ordinal $\alpha<\lambda^+$ is said to be:
\begin{itemize}
\item \emph{good} with respect to $\vec f$ iff there exist a cofinal subset $A\s\alpha$ and $i<\cf(\lambda)$ such that, for every pair of ordinals $\gamma<\delta$ from $A$, $f_\gamma<^if_\delta$;
\item \emph{very good} with respect to $\vec f$ iff there exist a club $C\s\alpha$ and $i<\cf(\lambda)$ such that, for every pair of ordinals $\gamma<\delta$ from $C$, $f_\gamma<^if_\delta$.
\end{itemize}

We also let:
\begin{itemize}
\item $G(\vec f):=\{\alpha\in E^{\lambda^+}_{\neq\cf(\lambda)}\mid \alpha\text{ is good with respect to }\vec f\}$;
\item $V(\vec f):=\{\alpha\in E^{\lambda^+}_{\neq\cf(\lambda)}\mid \alpha\text{ is very good with respect to }\vec f\}$.
\end{itemize}
\end{definition}

Clearly, $E^{\lambda^+}_{<\cf(\lambda)}\s V(\vec f)\s G(\vec f)$. It is also not hard to see that if $\vec f,\vec g$ are two scales in the same product $\prod\vec\lambda$, then they interleave each other on a club,
so that $G(\vec f)\symdiff G(\vec g)$ is nonstationary.
This means that, up to a club, the set of good points is in fact an invariant of $\vec\lambda$.
We shall soon discuss the question of whether the same is true for the set of very good points, but let us first recall a few fundamental results of Shelah.\footnote{For an excellent survey, see \cite{MR2768694}.}

\begin{fact}[Shelah, \cite{Sh:420},\cite{Sh:g}]\label{scales} For every singular cardinal $\lambda$:
\begin{enumerate}
\item\label{c1}  There exists a scale for $\lambda$;
\item\label{c2} For any scale $\vec f$ for $\lambda$ and every $\theta\in\reg(\lambda)\setminus\{\cf(\lambda)\}$,
the intersection $G(\vec f)\cap E^{\lambda^+}_\theta$ is stationary;
\item\label{c3} If $\vec f=\langle f_\gamma\mid\gamma<\lambda^+\rangle$ is a scale for $\lambda$ in a product $\prod_{i<\cf(\lambda)}\lambda_i$ and $\alpha\in E^{\lambda^+}_{\neq\cf(\lambda)}$, then $\alpha\in G(\vec f)$ iff there exists $e\in \prod_{i<\cf(\lambda)}\lambda_i$
satisfying $\cf(e(i))=\cf(\alpha)$ whenever $\lambda_i>\cf(\alpha)$,
and such that $e$ forms an \emph{exact upper bound} for $\vec f\restriction\alpha$, i.e.:
\begin{itemize}
\item for all $\gamma<\alpha$, $f_\gamma<^*e$;
\item for all $g\in\prod_{i<\cf(\lambda)}\lambda_i$ with $g<^*e$, there is $\gamma<\alpha$ with $g<^* f_\gamma$.
\end{itemize}
\end{enumerate}
\end{fact}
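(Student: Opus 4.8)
Since Fact~\ref{scales} collects classical theorems of Shelah's \textit{pcf} theory, my plan is not to reprove them from scratch but to recall the standard route, flagging the genuinely heavy inputs.

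For Clause~(1), fix any strictly increasing sequence $\langle\mu_j\mid j<\cf(\lambda)\rangle$ of regular cardinals above $\cf(\lambda)$ converging to $\lambda$, and put $a:=\{\mu_j\mid j<\cf(\lambda)\}$, a progressive set of regular cardinals with $|\prod a|\geq\lambda^+$. The \textit{pcf} theorem guarantees that $\operatorname{pcf}(a)$ has a maximum and that the ideal $J_{<\lambda^+}[a]$ has a generator; after thinning $a$ to a suitable cofinal subset and re-indexing the generator as $\langle\lambda_i\mid i<\cf(\lambda)\rangle$ --- while arranging that $J_{<\lambda^+}$ restricted to it is the bounded ideal, so that $<_{J_{<\lambda^+}}$ becomes $<^*$ --- the statement that $\prod_i\lambda_i$ has true cofinality $\lambda^+$ modulo $<^*$ is precisely the statement that it carries a scale. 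The one substantive input here is the \textit{pcf} theorem itself, which I would cite rather than reprove.

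For Clause~(3), write $\rho:=\cf(\alpha)$. The implication that a suitable exact upper bound $e$ for $\vec f\restriction\alpha$ makes $\alpha$ good is a direct construction: for each $i$ with $\lambda_i>\rho$ fix a continuous increasing sequence $\langle e^\xi_i\mid\xi<\rho\rangle$ cofinal in $e(i)$, set $e^\xi:=\langle e^\xi_i\mid i<\cf(\lambda)\rangle\in\prod_i\lambda_i$, use exactness to pick $\gamma_\xi<\alpha$ with $e^\xi<^*f_{\gamma_\xi}$, and arrange $\langle\gamma_\xi\mid\xi<\rho\rangle$ to be strictly increasing, continuous and cofinal in $\alpha$. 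The key point is that \emph{every} $g<^*e$ satisfies $g<^*e^\xi$ for some $\xi<\rho$: one uses that $\rho$ is regular with $\rho\neq\cf(\lambda)$, so that a diagonal supremum formed over the $\cf(\lambda)$ coordinates of values below $\rho$ stays below $\rho$ (when $\rho>\cf(\lambda)$), together with the companion fact that past a common threshold $\langle f_{\gamma_\xi}\mid\xi<\rho\rangle$ is pointwise increasing. Feeding this back and stabilizing the relevant threshold on a cofinal subset $A\subseteq\{\gamma_\xi\mid\xi<\rho\}$, a single index $i^*$ witnesses that $\alpha$ is good. Conversely, if $\alpha$ is good via a cofinal $A\subseteq\alpha$ and an index $i_0$, put $e(i):=\sup\{f_\gamma(i)\mid\gamma\in A\}$ for $i\geq i_0$ (correcting $e$ on the lower coordinates so that $\cf(e(i))=\rho$ whenever $\lambda_i>\rho$), and verify that $e$ is an exact upper bound of $\vec f\restriction\alpha$; exactness again reduces to the same diagonal-supremum estimate, which is exactly where $\rho\neq\cf(\lambda)$ enters. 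When $\rho<\cf(\lambda)$ the statement is in any case immediate from $E^{\lambda^+}_{<\cf(\lambda)}\subseteq V(\vec f)\subseteq G(\vec f)$ recalled in the text, modulo a separate and easier production of the upper bound.

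Clause~(2) then follows from Clause~(3). If $\theta<\cf(\lambda)$, then $E^{\lambda^+}_\theta\subseteq V(\vec f)\subseteq G(\vec f)$ by that inclusion, so $G(\vec f)\cap E^{\lambda^+}_\theta$ equals the stationary set $E^{\lambda^+}_\theta$. If $\theta>\cf(\lambda)$, fix a club $D\subseteq\lambda^+$ and build an $\in$-increasing and continuous chain $\langle M_\xi\mid\xi<\theta\rangle$ of elementary submodels of $H(\chi)$ (for large regular $\chi$), each of size $\theta$, with $\{\vec f,D\}\subseteq M_0$ and $\langle M_\eta\mid\eta\leq\xi\rangle\in M_{\xi+1}$ for every $\xi<\theta$; then $\alpha:=\sup(\bigcup_{\xi<\theta}(M_\xi\cap\lambda^+))$ lies in $D$, has cofinality $\theta$, and $e(i):=\sup(\bigcup_{\xi<\theta}(M_\xi\cap\lambda_i))$ is an exact upper bound for $\vec f\restriction\alpha$ with $\cf(e(i))=\theta$ for all but boundedly many $i$, so $\alpha\in G(\vec f)$ by Clause~(3). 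The main obstacle throughout is not the bookkeeping above but its two heavy inputs: the \textit{pcf} theorem for Clause~(1), and --- for Clauses~(2) and~(3) --- the verification that the ``limit'' function attached to a good point, equivalently to an internally approachable chain, is an \emph{exact} upper bound and not merely the least upper bound. In the paper, both are simply imported from \cite{Sh:420} and \cite{Sh:g} (see also the survey \cite{MR2768694}).
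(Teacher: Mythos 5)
The paper does not prove this statement: it is imported as a black-box Fact with citations to Shelah, which is essentially what your write-up also does for the two heavy inputs you isolate. Your sketch of the standard derivations --- the pcf theorem for Clause~(1), the exact-upper-bound characterization for Clause~(3) with the $\cf(\alpha)\neq\cf(\lambda)$ diagonal-supremum estimate correctly identified as the crux, and the internally approachable chain of elementary submodels for Clause~(2) --- is accurate and matches the classical treatment (cf.\ \cite{MR2768694}), so there is nothing to correct against the paper.
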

\begin{definition}
A scale $\vec f$ for $\lambda$ is said to be \emph{good} (resp. \emph{very good}) iff there exists a club $D\s\lambda^+$ such that $D\cap E^{\lambda^+}_{\neq\cf(\lambda)}\s G(\vec f)$
(resp. $D\cap E^{\lambda^+}_{\neq\cf(\lambda)}\s V(\vec f)$).
\end{definition}

Our Definition~\ref{principle} is motivated by a proof of a result of Cummings and Foreman \cite[Theorem~3.1]{cummings2010} asserting that if $V=L$, then
 $\prod_{n<\omega}\aleph_n$ carries a very good scale and yet another scale which fails to be very good at every point of uncountable cofinality.
Among other things, their proof shows:
\begin{prop}\label{prop25} Suppose that $\vec\lambda=\langle\lambda_i\mid i<\cf(\lambda)\rangle$ is a strictly increasing sequence of cardinals, converging to a singular cardinal $\lambda$, and $\prod\vec\lambda$ carries a scale.
If $\prt(\lambda^+,\cf(\lambda),E^{\lambda^+}_{>\cf(\lambda)})$ holds, then $\prod\vec\lambda$ also carries a scale which is not very good.
\end{prop}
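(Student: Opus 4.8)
The plan is to take a scale $\vec f=\langle f_\gamma\mid\gamma<\lambda^+\rangle$ for $\lambda$ lying in the product $\prod\vec\lambda$ (one exists by hypothesis) and to perturb it into a new scale $\vec g$ in the same product which fails to be very good on a stationary set. Write $\mu:=\cf(\lambda)$, and apply $\prt(\lambda^+,\mu,E^{\lambda^+}_{>\mu})$ to obtain a partition $\langle S_i\mid i<\mu\rangle$ of $\lambda^+$ for which
\[
T^*:=E^{\lambda^+}_{>\mu}\cap\bigcap_{i<\mu}\Tr(S_i)
\]
is stationary; thus for every $\alpha\in T^*$ and every $i<\mu$, the set $S_i\cap\alpha$ is stationary in $\alpha$.

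For each $\gamma<\lambda^+$, let $i_\gamma<\mu$ be the unique index with $\gamma\in S_{i_\gamma}$, and define $g_\gamma\in\prod\vec\lambda$ by knocking the first $i_\gamma$ coordinates of $f_\gamma$ down to $0$:
\[
g_\gamma(j):=\begin{cases}0,& j<i_\gamma,\\ f_\gamma(j),& i_\gamma\le j<\mu.\end{cases}
\]
The first step is to check that $\vec g=\langle g_\gamma\mid\gamma<\lambda^+\rangle$ is again a scale for $\lambda$ in $\prod\vec\lambda$. Since $g_\gamma$ agrees with $f_\gamma$ on the tail $[i_\gamma,\mu)$ of coordinates, any $h\in\prod\vec\lambda$ with $h<^* f_\gamma$ also satisfies $h<^* g_\gamma$, so $<^*$-cofinality of $\vec f$ transfers to $\vec g$; and for $\gamma<\delta$, picking $i$ with $f_\gamma<^i f_\delta$ and setting $i':=\max\{i_\gamma,i_\delta,i\}<\mu$, one gets $g_\gamma<^{i'}g_\delta$, so $\vec g$ is $<^*$-increasing as well.

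The second step is to show that $T^*\cap V(\vec g)=\emptyset$. Suppose towards a contradiction that some $\alpha\in T^*$ is very good with respect to $\vec g$, as witnessed by a club $C\s\alpha$ and an index $i_0<\mu$ with $g_\gamma<^{i_0}g_\delta$ whenever $\gamma<\delta$ lie in $C$. As $S_{i_0+1}\cap\alpha$ is stationary in $\alpha$ while $C$ is a club in $\alpha$, we may pick $\delta\in S_{i_0+1}\cap C$ with $\delta>\min(C)$. Then $g_{\min(C)}(i_0)<g_\delta(i_0)$ by the choice of $C$, whereas $g_\delta(i_0)=0$ because $i_0<i_0+1=i_\delta$, a contradiction. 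Since $T^*$ is a stationary subset of $E^{\lambda^+}_{\neq\mu}$ disjoint from $V(\vec g)$, no club $D\s\lambda^+$ can satisfy $D\cap E^{\lambda^+}_{\neq\mu}\s V(\vec g)$, so $\vec g$ is not very good.

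I do not expect a genuine obstacle here: the real content sits in Theorem~A, which supplies the needed instance of $\prt(\ldots)$. The one point deserving care is the verification that the perturbed sequence $\vec g$ stays $<^*$-cofinal in $\prod\vec\lambda$ — which works precisely because each $g_\gamma$ was altered only on a bounded (indeed $<\mu$-sized) initial segment of coordinates — together with the observation that the simultaneous reflection of all $\mu$ pieces $S_i$ at each point of $T^*$ is exactly what defeats a hypothetical very-good witness regardless of which threshold $i_0<\mu$ it would use.
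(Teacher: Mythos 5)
Your proposal is correct and follows essentially the same strategy as the paper: perturb a given scale by planting zeros according to the partition supplied by $\prt(\lambda^+,\cf(\lambda),E^{\lambda^+}_{>\cf(\lambda)})$, and use the simultaneous reflection of all the pieces at each point of $T^*$ to defeat any candidate very-good witness. The only (harmless) deviations are that you zero out the initial segment of coordinates below $i_\gamma$ rather than the single coordinate $i_\gamma$, and you derive the contradiction from one element of $S_{i_0+1}\cap C$ having value $0$ at coordinate $i_0$ instead of two elements of $S_{i_0}\cap C$ sharing that value.
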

\begin{proof} We repeat the argument from \cite[\S3]{cummings2010}.
Let $\vec f = \langle f_\gamma\mid \gamma<\lambda^+\rangle$ be a scale in $\prod\vec\lambda$.
By $\prt(\lambda^+,\cf(\lambda),E^{\lambda^+}_{>\cf(\lambda)})$, we fix a partition $\langle S_i\mid i<\cf(\lambda)\rangle$ of $\lambda^+$
for which $T:=E^{\lambda^+}_{>\cf(\lambda)}\cap \bigcap_{i<\cf(\lambda)}\Tr(S_i)$ is stationary.
Now, define a new scale $\vec g = \langle g_\gamma\mid \gamma<\lambda^+\rangle$ by letting, for all $\gamma<\lambda^+$ and $i<\cf(\lambda)$,
$$g_\gamma(i):=\begin{cases}0,&\text{if }\gamma\in S_i;\\
f_\gamma(i),&\text{otherwise}.\end{cases}$$

Evidently, $f_\gamma$ and $g_\gamma$ differ on at most a single index, and so $\vec g$ is a scale. However, $\vec g$ fails to be very good at any given point $\alpha \in T$.
To see this, fix an arbitrary club $C \s \alpha$ and an index $i<\cf(\lambda)$. Let $\gamma:=\min(C\cap S_i)$ and ${\delta}:=\min(C\cap S_i\setminus(\gamma+1))$. Then $\gamma<{\delta}$ is a pair of elements of $C$, while $g_\gamma(i) = 0 = g_{\delta}(i)$.
\end{proof}

\begin{remark}
Gitik and Sharon \cite{Gitik-Sharon2008} constructed a model in which $\aleph_{\omega^2}$ carries a very good scale in one product and a bad (i.e., not good) scale in another --- hence $\square_{\aleph_{\omega^2}}$, let alone $V=L$, cannot hold.
The question, then, is whether the former product carries only very good scales. Our results show that it does not, and in fact that it is a theorem of $\zfc$ that in any product that carries a scale, there are scales which are not very good.
Furthermore, it follows from the preceding proof together with Corollary~\ref{cor34} below (using $\theta:=\cf(\lambda)$) that any scale for a singular cardinal $\lambda$ may be manipulated to have its set of very good points to not be a club relative to cofinality $\nu$ for unboundedly many regular cardinals $\nu<\lambda$.
\end{remark}

\section{Theorem~A}
Recall that $\mathcal D(\nu,\theta)$ stands for $\cf([\nu]^\theta,\supseteq)$,\footnote{Take note of the direction of the containment.} i.e., the least size of a family $\mathcal D\s[\nu]^\theta$
with the property that for every  $a\in[\nu]^\theta$, there is $d\in\mathcal D$ with $d\s a$.
Suppose now that $\nu$ is regular and uncountable;
we let $\mathcal C(\nu,\theta)$ denote the least size of a family $\mathcal C\s[\nu]^\theta$
with the property that for every club $b$ in $\nu$, there is $c\in\mathcal C$ with $c\s b$.
It is well-known that $\mathcal C(\nu,\nu)$, better known as $\cf(\ns_\nu,\s)$, can be arbitrarily large. In contrast, for small values of $\theta$, $\mathcal C(\nu,\theta)$ is provably small:

\begin{lemma}\label{lemmaC} Suppose that $\theta\le\nu$ are cardinals, with $\nu$ regular uncountable. Then:
\begin{itemize}
\item $\nu\le\mathcal C(\nu,\theta)\le\mathcal D(\nu,\cf(\theta))\le 2^\nu$;
\item $\mathcal C(\nu,\theta)=\nu$ whenever $\theta^{++}<\nu$;
\item $\mathcal C(\nu,\theta)=\nu$ whenever $\theta<\cf(\theta)^+<\nu$.
\end{itemize}
\end{lemma}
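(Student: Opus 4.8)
The plan is to clear the outer inequalities of the first bullet and the ``$\theta$ regular'' cases immediately, and then to concentrate on the single combinatorial core: the inequality $\mathcal C(\nu,\theta)\le\mathcal D(\nu,\cf(\theta))$ for singular $\theta$, which will also drive Clause~(2). The bound $\nu\le\mathcal C(\nu,\theta)$ is a tail argument: if $\mathcal C\s[\nu]^\theta$ has size $<\nu$ then, $\nu$ being regular and $\theta<\nu$, the ordinal $\beta:=\sup\{\sup(c)\mid c\in\mathcal C\}$ is $<\nu$, and the club $\nu\setminus(\beta+1)$ contains no member of $\mathcal C$. The bound $\mathcal D(\nu,\cf(\theta))\le 2^\nu$ holds since $[\nu]^{\cf(\theta)}$ is itself cofinal in $([\nu]^{\cf(\theta)},\supseteq)$ and has size $\le\nu^{\cf(\theta)}\le 2^\nu$. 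Finally, $\mathcal C(\nu,\theta)\le\mathcal D(\nu,\cf(\theta))$ is immediate when $\theta$ is regular, for then $\cf(\theta)=\theta$ and any family cofinal in $([\nu]^\theta,\supseteq)$ is in particular club-guessing. So the first bullet reduces to the case of singular $\theta$ --- which is then automatically a limit cardinal, a fact I use freely below.

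For singular $\theta$ my plan is as follows. Fix an increasing sequence $\langle\tau_j\mid j<\cf(\theta)\rangle$ of regular cardinals cofinal in $\theta$; since $\tau_j^+<\theta<\nu$, Shelah's club-guessing theorem (in essence Clause~(3) applied to the regular cardinal $\tau_j$) provides a club-guessing sequence $\langle g^j_\rho\mid\rho\in E^\nu_{\tau_j}\rangle$, each $g^j_\rho$ a club of $\rho$ of order type $\tau_j$, so that in particular $S^j_b:=\{\rho\in E^\nu_{\tau_j}\mid g^j_\rho\s b\}$ is unbounded in $\nu$ for every club $b\s\nu$. Also fix $\mathcal D\s[\nu]^{\cf(\theta)}$ cofinal in $([\nu]^{\cf(\theta)},\supseteq)$ with $|\mathcal D|=\mathcal D(\nu,\cf(\theta))=:\chi\ge\nu$. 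Given a club $b\s\nu$: the set $B:=\bigcap_{j<\cf(\theta)}\acc^+(S^j_b)$ is again a club, being an intersection of $\cf(\theta)<\nu$ clubs, so I may fix an increasing sequence $\langle\delta_k\mid k<\cf(\theta)\rangle$ of points of $B$; for each $k$, as $\delta_k\in\acc^+(S^k_b)$ I may then fix $\rho_k\in S^k_b$ with $\sup_{i<k}\delta_i<\rho_k<\delta_k$, so that $g^k_{\rho_k}$ is a size-$\tau_k$ club of $\rho_k$ contained in $b$, and $\rho_k\in b$ since $b$ is closed; the $\rho_k$ are distinct, hence $R:=\{\rho_k\mid k<\cf(\theta)\}$ is a $\cf(\theta)$-sized subset of $b$ and so contains some $d_1\in\mathcal D$. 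The punchline is that, writing $\bar g_\rho:=g^j_\rho$ for the unique $j$ with $\cf(\rho)=\tau_j$ (defined for all $\rho\in\bigcup_j E^\nu_{\tau_j}$, in particular for all $\rho\in d_1$), the set $c:=\bigcup_{\rho\in d_1}\bar g_\rho$ is a subset of $b$ of size exactly $\theta$ --- writing $d_1=\{\rho_{k_m}\mid m<\cf(\theta)\}$ increasingly, the set $\{k_m\mid m<\cf(\theta)\}$ is a $\cf(\theta)$-sized, hence cofinal, subset of $\cf(\theta)$, so $\sup_m\tau_{k_m}=\theta$ --- and $c$ depends on $d_1$ alone. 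Therefore $\{\bigcup_{\rho\in d}\bar g_\rho\mid d\in\mathcal D\}$ is a club-guessing family of size $\le\chi$, as desired. The routine points to be checked with care are that $\acc^+(S^j_b)$ is a club and that $|c|=\theta$, since the whole count hinges on $c$ being recoverable from $d_1$.

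It remains to establish Clauses~(3) and~(2). Clause~(3): the hypothesis $\theta<\cf(\theta)^+$ forces $\theta$ regular, so the assertion is that ``$\theta$ regular and $\theta^+<\nu$'' imply $\mathcal C(\nu,\theta)=\nu$, and this is Shelah's club-guessing theorem for $E^\nu_\theta$ (whose guessing sets are clubs of size $\theta$) combined with the lower bound of the first bullet. Clause~(2): for regular $\theta$ this is contained in Clause~(3) (as $\theta^{++}<\nu$ implies $\theta^+<\nu$), so the content is again singular $\theta$. Here I cannot simply rerun the previous paragraph with a smaller outer family, because the ``catch $R$'' step genuinely needs a family cofinal in $([\nu]^{\cf(\theta)},\supseteq)$, of size $\mathcal D(\nu,\cf(\theta))$ rather than $\nu$. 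Instead the plan is to produce, directly, a club-guessing sequence $\langle c_\delta\mid\delta\in E^\nu_{\cf(\theta)}\rangle$ with each $c_\delta$ a subset of $\delta$ of size $\theta$ and with $c_\delta\s b$ for stationarily many $\delta$, for every club $b$; one then reads off a club-guessing family of size $\nu$. I expect the main obstacle of the whole lemma to be exactly this: upgrading Shelah's basic club-guessing sequence on $E^\nu_{\cf(\theta)}$ --- whose guesses are merely $\cf(\theta)$-sized --- to one whose $\cf(\theta)$-sized skeleton is already closed under a fixed $C$-sequence, so that spreading the $C$-sequence over the skeleton yields a $\theta$-sized guess still lying inside $b$. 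It is precisely for this upgrade that the extra cardinal of room $\theta^{++}<\nu$ is needed (as opposed to the mere $\theta^+<\nu$ of Clause~(3)), and I would attempt it through the club-guessing/Ulam-matrix machinery alluded to in the introduction, which secures such improvements without cardinal-arithmetic hypotheses.
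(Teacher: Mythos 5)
Your first bullet and Clause~(3) are correct and essentially follow the paper's route: the tail argument for the lower bound, the trivial reduction when $\theta$ is regular, and, for singular $\theta$, a guessing family of size $\mathcal D(\nu,\cf(\theta))$ obtained by letting a cofinal $\mathcal D\subseteq[\nu]^{\cf(\theta)}$ act on a well-chosen $\cf(\theta)$-sized set of guessed points of strictly increasing cofinalities $\tau_k$. (Your care in making the $\tau_k$ increase along the chosen $\rho_k$, so that any $d\in\mathcal D$ caught inside $R$ still produces a union of size exactly $\theta$, is precisely the detail behind the paper's ``Evidently'' in its final paragraph, and it is right; one should also discard the members of the resulting family whose size is not exactly $\theta$.)

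The genuine gap is in Clause~(2) for singular $\theta$: you announce an ``upgrade'' of club-guessing on $E^\nu_{\cf(\theta)}$ via Ulam-matrix machinery and do not carry it out, so as written that clause is unproven. Moreover, the obstacle you diagnose there is illusory, because $\mathcal C(\nu,\cdot)$ is monotone: if $\theta\le\theta'\le\nu$, then replacing each member of a witnessing family in $[\nu]^{\theta'}$ by a fixed subset of size $\theta$ yields a witnessing family in $[\nu]^{\theta}$ of the same cardinality, whence $\mathcal C(\nu,\theta)\le\mathcal C(\nu,\theta')$. Thus $\theta^{++}<\nu$ gives, by Shelah's club-guessing at $E^\nu_{\theta^+}$ (note that $\theta^+$ is regular and $(\theta^+)^+=\theta^{++}<\nu$), the chain $\mathcal C(\nu,\theta)\le\mathcal C(\nu,\theta^+)\le\nu$, with no case split on $\cf(\theta)$ and no new machinery. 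This one-line reduction is exactly how the paper handles Clause~(2).
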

\begin{proof}
Evidently, $\mathcal D(\nu,\cf(\theta))\le\nu^{\cf(\theta)}\le\nu^\nu=2^\nu$.
As, for all $\alpha<\nu$, $\nu\setminus\alpha$ is a club in $\nu$, we also have $\nu\le\mathcal C(\nu,\theta)$.
In addition, it is obvious that if $\cf(\theta)=\theta$ then  $\mathcal C(\nu,\theta)\le\mathcal D(\nu,\cf(\theta))$.

Next, suppose that $\mu$ is an arbitrary infinite regular cardinal, with $\mu^+<\nu$. By Shelah's club-guessing theorem \cite[III.\S2]{Sh:g},
there exists a sequence $\langle c_\alpha\mid \alpha\in E^\nu_\mu\rangle$ having the following crucial property: for every club $c$ in $\nu$, there exists $\alpha\in E^\nu_\mu$ such that $c_\alpha\s c\cap\alpha$ and  $\otp(c_\alpha)=\mu$.
It follows that $\{ c_\alpha\mid \alpha\in E^\nu_\mu\}$ witnesses that $\mathcal C(\nu,\mu)\le\nu$.
In particular:
\begin{itemize}
\item if $\theta^{++}<\nu$, then using $\mu:=\theta^+$. we have $\mathcal C(\nu,\theta)\le\mathcal C(\nu,\mu)=\nu$;
\item if $\theta<\cf(\theta)^+<\nu$, then using $\mu:=\theta$, we have $\mathcal C(\nu,\theta)=\mathcal C(\nu,\mu)=\nu$.
\end{itemize}

Finally, we are left with dealing with the case that $\nu\in\{\theta^+,\theta^{++}\}$ and $\cf(\theta)<\theta$.
For every $\mu\in\reg(\theta)$, fix a family $\mathcal C_\mu$ witnessing that $\mathcal C(\nu,\mu)=\nu$.
Fix an enumeration $\{ c_\delta\mid \delta<\nu\}$ of $\bigcup_{\mu\in\reg(\theta)}\mathcal C_\mu$.
Also, fix a family $\mathcal D$ witnessing the value of $\mathcal D(\nu,\cf(\theta))$.
Then, let $\mathcal C:=\{\bigcup_{\delta\in d} c_\delta\mid d\in\mathcal D\}$.
Evidently, $\mathcal C(\nu,\theta)\le|\mathcal C|\le|\mathcal D|=\mathcal D(\nu,\cf(\theta))$.
\end{proof}

\begin{cor} For every infinite cardinal $\theta$ and every cardinal $\lambda\ge\mathcal D(\theta,\cf(\theta))$, we have $\mathcal C(\nu,\theta)\le\lambda$ whenever $\nu\in\reg(\lambda)\setminus\theta$.
\end{cor}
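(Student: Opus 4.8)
The plan is to derive this from Lemma~\ref{lemmaC} together with one small self-improvement of the estimate $\mathcal C(\nu,\theta)\le\mathcal D(\nu,\cf(\theta))$ recorded there. Fix $\theta$ and $\lambda\ge\mathcal D(\theta,\cf(\theta))$, and let $\nu\in\reg(\lambda)\setminus\theta$, so that $\theta\le\nu<\lambda$ and $\nu$ is regular uncountable. If $\theta^{++}<\nu$, then Lemma~\ref{lemmaC} already gives $\mathcal C(\nu,\theta)=\nu<\lambda$, and we are done. Otherwise $\theta\le\nu\le\theta^{++}$, so $\nu\in\{\theta,\theta^+,\theta^{++}\}$, and the first item of Lemma~\ref{lemmaC} still yields $\mathcal C(\nu,\theta)\le\mathcal D(\nu,\cf(\theta))$. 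Thus the whole task reduces to verifying $\mathcal D(\nu,\cf(\theta))\le\lambda$ for these three values of $\nu$.

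The one ingredient I would isolate is the inequality $\mathcal D(\rho^+,\chi)\le\rho^+\cdot\mathcal D(\rho,\chi)$, valid for all infinite cardinals $\chi\le\rho$. Indeed, since $\cf(\rho^+)>\chi$, every $a\in[\rho^+]^\chi$ is bounded, say $a\subseteq\beta$ with $\beta<\rho^+$; so, fixing once and for all an injection $e_\beta\colon\beta\to\rho$ for each $\beta<\rho^+$ (possible as $|\beta|\le\rho$) together with a family $\mathcal D_0\subseteq[\rho]^\chi$ of size $\mathcal D(\rho,\chi)$ such that every member of $[\rho]^\chi$ contains some element of $\mathcal D_0$, the collection $\{e_\beta^{-1}[d]\mid\beta<\rho^+,\ d\in\mathcal D_0\}$ has size $\le\rho^+\cdot\mathcal D(\rho,\chi)$ and witnesses $\mathcal D(\rho^+,\chi)$: given $a\subseteq\beta$ as above, choose $d\in\mathcal D_0$ with $d\subseteq e_\beta[a]$, and then $e_\beta^{-1}[d]$ is a member of the collection, has size $\chi$, and is contained in $a$.

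Granting this, I would apply it at most twice. Trivially $\mathcal D(\theta,\cf(\theta))\le\theta\cdot\mathcal D(\theta,\cf(\theta))$; with $\rho:=\theta$ and $\chi:=\cf(\theta)$ it gives $\mathcal D(\theta^+,\cf(\theta))\le\theta^+\cdot\mathcal D(\theta,\cf(\theta))$; and with $\rho:=\theta^+$ and $\chi:=\cf(\theta)$ (note $\cf(\theta)\le\theta<\theta^+$) it gives $\mathcal D(\theta^{++},\cf(\theta))\le\theta^{++}\cdot\mathcal D(\theta^+,\cf(\theta))\le\theta^{++}\cdot\mathcal D(\theta,\cf(\theta))$. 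So in each of the three cases $\mathcal D(\nu,\cf(\theta))\le\nu\cdot\mathcal D(\theta,\cf(\theta))=\max\{\nu,\mathcal D(\theta,\cf(\theta))\}$, which is $\le\lambda$ because $\nu<\lambda$ and $\mathcal D(\theta,\cf(\theta))\le\lambda$ by hypothesis; hence $\mathcal C(\nu,\theta)\le\lambda$, as wanted.

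The only step with any actual content is the inequality $\mathcal D(\rho^+,\chi)\le\rho^+\cdot\mathcal D(\rho,\chi)$; the rest is bookkeeping with Lemma~\ref{lemmaC}. The two places I would be slightly careful are that the notation ``$\reg(\lambda)$'' already forces $\nu<\lambda$, so the stray factor $\nu$ is harmless, and that the case $\nu=\theta$ can occur only when $\theta$ is regular, in which case $\mathcal D(\nu,\cf(\theta))=\mathcal D(\theta,\theta)$ is literally the quantity the hypothesis bounds by $\lambda$, so no auxiliary work is needed there.
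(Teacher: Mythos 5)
Your proof is correct and follows essentially the same route as the paper: reduce via Lemma~\ref{lemmaC} to the cases $\nu\in\{\theta,\theta^+,\theta^{++}\}$ and then bound $\mathcal D(\nu,\cf(\theta))$ by $\max\{\nu,\mathcal D(\theta,\cf(\theta))\}$ by stepping up through successor cardinals. The only difference is cosmetic: the paper asserts the stepping-up inequality $\mathcal D(\theta^{+n},\cf(\theta))\le\max\{\theta^{+n},\mathcal D(\theta,\cf(\theta))\}$ without proof and phrases the rest as a contrapositive, whereas you argue directly and supply the (correct) bounded-sets-plus-injections verification of that inequality.
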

\begin{proof}
First, note that $\mathcal D(\theta^{+n},\cf(\theta))\le\max\{\theta^{+n},\mathcal D(\theta,\cf(\theta))\}$ for all $n<\omega$.

We now prove the contrapositive. Suppose that $\nu$ is a regular cardinal and $\mathcal C(\nu,\theta)>\lambda>\nu\ge\theta$.
Then, by Lemma~\ref{lemmaC}, $\nu=\theta^{+n}$ for some $n<3$ and $\mathcal D(\nu,{\cf(\theta)})>\lambda$.
It follows that $$\lambda<\mathcal D(\nu,\cf(\theta))\le\max\{\nu,\mathcal D(\theta,\cf(\theta))\}\le\max\{\lambda,\mathcal D(\theta,\cf(\theta)\},$$
and hence $\mathcal D(\theta,{\cf(\theta)})>\lambda$.
\end{proof}
\begin{remark} See \cite{MR3563076} for a study of the map $\theta\mapsto\mathcal D(\theta,\cf(\theta))$ over the class of singular cardinals.
\end{remark}
A main aspect of the upcoming proofs is the analysis of local versus global features of a function.
For this, it is useful to establish the following lemma.

\begin{lemma}\label{localtoglobal}
For any ordinal $\zeta$ of uncountable cofinality, there exists a class map $\psi_\zeta:\ord\rightarrow\cf(\zeta)$ satisfying the following.
For every ordinal $\alpha$ with $\cf(\alpha)=\cf(\zeta)$, every function $f:\alpha\rightarrow\ord$
and every stationary $s\s\alpha$ such that $f\restriction s$ is strictly increasing and converging to $\zeta$,
there exists a club $c\s\alpha$ such that  $(\psi_\zeta\circ f)\restriction(c\cap s)$ is strictly increasing.

\end{lemma}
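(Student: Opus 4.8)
The plan is to build $\psi_\zeta$ from a single fixed cofinal set inside $\zeta$, and then, for each instance $(\alpha,f,s)$, to manufacture the required club $c$ as a set of closure points of an auxiliary function read off from $f$ and that fixed set. Concretely: since $\cf(\zeta)$ is uncountable, $\zeta$ is a limit ordinal of cofinality $\cf(\zeta)$, so I would fix once and for all a cofinal $C\s\zeta$ of order type $\cf(\zeta)$ (the image of a strictly increasing cofinal map $\cf(\zeta)\to\zeta$), and declare $\psi_\zeta(\beta):=\otp(C\cap\beta)$ for $\beta<\zeta$ and $\psi_\zeta(\beta):=0$ for $\beta\ge\zeta$. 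For $\beta<\zeta$ the set $C\cap\beta$ is a proper initial segment of $C$, so $\otp(C\cap\beta)<\otp(C)=\cf(\zeta)$, and thus $\psi_\zeta$ is a class map from $\ord$ into $\cf(\zeta)$. The only feature of $\psi_\zeta$ the rest of the argument will use is: if $\beta<\gamma<\zeta$ and $C\cap[\beta,\gamma)\ne\emptyset$, then $\psi_\zeta(\beta)<\psi_\zeta(\gamma)$ (because then $C\cap\beta$ is a proper initial segment of $C\cap\gamma$).

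Next, given $\alpha$, $f$ and $s$ as in the statement, set $\kappa:=\cf(\zeta)=\cf(\alpha)$, an uncountable regular cardinal, and I would begin by recording two elementary boundedness facts. Since $s$ is cofinal in the limit ordinal $\alpha$, $f\restriction s$ is strictly increasing, and $\sup(f[s])=\zeta$, we get $f(\gamma)<\zeta$ for every $\gamma\in s$; and, picking any $\gamma\in s$ with $\gamma\ge\beta$, we get $F(\beta):=\sup(f[s\cap\beta])\le f(\gamma)<\zeta$ for every $\beta<\alpha$. Then, using $\sup(C)=\zeta=\sup(f[s])$, I would define $\rho\colon s\to\zeta$ by $\rho(\gamma):=\min(C\setminus(f(\gamma)+1))$ and $\sigma\colon s\to s$ by $\sigma(\gamma):=\min\{\gamma'\in s\mid f(\gamma')>\rho(\gamma)\}$, noting that $f(\gamma)<\rho(\gamma)\in C$ and that $\gamma<\sigma(\gamma)\in s$ with $f(\sigma(\gamma))>\rho(\gamma)$.

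The decisive step will be to take $c:=\{\beta<\alpha\mid \sigma[s\cap\beta]\s\beta\}$ and to prove it is a club in $\alpha$; closedness is immediate, and I expect the clubness — specifically, cofinality, in the case that $\alpha$ is singular — to be the one place where care is needed. Here the point is that $\sigma$ has bounded image on every proper initial segment of $\alpha$: for $\beta<\alpha$, the local boundedness $F(\beta)<\zeta$ yields a single ordinal $\bar\rho:=\min(C\setminus(F(\beta)+1))<\zeta$ with $\rho(\gamma)\le\bar\rho$ for all $\gamma\in s\cap\beta$, so $\sigma[s\cap\beta]$ is contained in the fixed ordinal $\min\{\gamma'\in s\mid f(\gamma')>\bar\rho\}+1<\alpha$. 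Granted this, from any $\beta_0<\alpha$ one recursively chooses $\beta_{n+1}<\alpha$ above both $\beta_n$ and $\sup(\sigma[s\cap\beta_n])$, and then $\beta_\omega:=\sup_n\beta_n<\alpha$ since $\cf(\alpha)=\kappa>\omega$, with $\beta_\omega\in c$ and $\beta_\omega>\beta_0$.

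Finally I would verify that $(\psi_\zeta\circ f)\restriction(c\cap s)$ is strictly increasing. For $\gamma<\delta$ in $c\cap s$: since $\gamma\in s\cap\delta$ and $\delta\in c$, we have $\sigma(\gamma)<\delta$, so (as $\sigma(\gamma),\delta\in s$ and $f\restriction s$ is strictly increasing) $f(\delta)>f(\sigma(\gamma))>\rho(\gamma)$. Hence $f(\gamma)<\rho(\gamma)<f(\delta)$ with $\rho(\gamma)\in C$, so $C\cap[f(\gamma),f(\delta))\ne\emptyset$, while $f(\gamma),f(\delta)<\zeta$; by the recalled property of $\psi_\zeta$ this gives $\psi_\zeta(f(\gamma))<\psi_\zeta(f(\delta))$. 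The only genuine obstacle, as flagged, is the clubness of $c$ for singular $\alpha$, which the observation $F(\beta)<\zeta$ resolves; the rest is routine bookkeeping.
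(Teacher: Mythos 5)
Your proof is correct. The map you define is essentially the paper's: there one fixes an increasing $\pi_\zeta:\cf(\zeta)\rightarrow\zeta$ with club image and sets $\psi_\zeta(\eta):=\min\{i\mid\eta<\pi_\zeta(i)\}$ for $\eta<\zeta$, which differs from your $\otp(C\cap\eta)$ only at points of $C$, and both arguments use only the property you isolate (an element of the fixed cofinal set lying in $[\beta,\gamma)$ forces $\psi_\zeta(\beta)<\psi_\zeta(\gamma)$). Where you genuinely diverge is in producing the club $c$. The paper collapses $\alpha$ to $\nu:=\cf(\zeta)$ via a fixed $\pi_\alpha$, forms $\bar f:=\psi_\zeta\circ f\circ\pi_\alpha$, and must discard the set $\bar t$ on which $\bar f$ is regressive relative to $s$; that $\bar t$ is nonstationary is proved by Fodor's lemma together with the unboundedness of $f[s]$ in $\zeta$, and then $c$ is taken inside $\pi_\alpha[\bar c\setminus\bar t]$, where the inequality $\bar f(\bar\gamma)<\bar\delta\le\bar f(\bar\delta)$ does the job. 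You instead stay in $\alpha$, define the ``next witness'' map $\sigma$ on $s$, and let $c$ be its set of closure points; the role of Fodor is played by the dual observation that $\sup(f[s\cap\beta])<\zeta$ for every $\beta<\alpha$, which is exactly what makes $\sigma$ locally bounded and hence $c$ unbounded even when $\alpha$ is not regular. Your route avoids pressing down altogether and, as written, uses only that $s$ is cofinal in $\alpha$ rather than stationary; the paper's version is a little shorter once Fodor is invoked, but the two arguments deliver the same statement with the same $\psi_\zeta$.
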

\begin{proof} Let $\nu$ be some regular uncountable cardinal, and let $\zeta$ be an ordinal of cofinality $\nu$.
For every ordinal $\alpha$ of cofinality $\nu$,
fix a strictly increasing function $\pi_\alpha:\nu\rightarrow\alpha$ whose image is a club in $\alpha$.
Define $\psi_\zeta:\ord\rightarrow\nu$ by letting:
$$\psi_\zeta(\eta):=\begin{cases}
\min\{ i<\nu\mid \eta<\pi_\zeta(i)\},&\text{if }\eta<\zeta;\\
0,&\text{otherwise}.
\end{cases}$$

Now, suppose that we are given a function $f:\alpha\rightarrow\ord$ along with a stationary $s\s \alpha$
on which $f$ is strictly increasing and converging to $\zeta$. Let $\bar f:=\psi_\zeta\circ f\circ \pi_\alpha$,
which is a function from $\nu$ to $\nu$.
Consider the club $\bar c:=\{\bar\delta<\nu\mid \bar f[\bar\delta]\s\bar\delta\}$,
and the set $\bar t:=\{\bar\delta<\nu\mid \pi_\alpha(\bar\delta)\in s\ \&\ \bar f(\bar\delta)<\bar\delta\}$.

If $\bar t$ is stationary, then by Fodor's lemma, we may fix some stationary $\hat t\s\bar t$ and some $i<\nu$ such that $\bar f[\hat t]=\{i\}$.
As $\pi_\alpha[\hat t]$ is a cofinal (indeed, stationary) subset of $s$, and $f\restriction s$ is strictly increasing and converging to $\zeta$, we may find a large enough $\delta\in\pi_\alpha[\hat t]$ such that $\eta:=f(\delta)$ is greater than $\pi_\zeta(i)$.
But then, for $\bar\delta:=\pi_\alpha^{-1}(\delta)$, we have $\bar f(\bar\delta)=\psi_\zeta(f(\pi_\alpha(\bar\delta)))=\psi_\zeta(f(\delta))=\psi_\zeta(\eta)>i$, contradicting the fact that $\bar\delta\in\hat t$.

So $\bar t$ is nonstationary, and hence we may find a club $c$ in $\alpha$ with $c\s\pi_\alpha[\bar c\setminus\bar t]$.
To see that $(\psi_\zeta\circ f)\restriction (c\cap s)$ is strictly increasing, let $\gamma<\delta$ be an arbitrary pair of elements of $c\cap s$. Put $\bar\gamma:=\pi_\alpha^{-1}(\gamma)$ and $\bar\delta:=\pi_\alpha^{-1}(\delta)$.
As $\bar\delta\in\bar c\setminus\bar t$ and $\pi_\alpha(\bar\delta)\in s$, we indeed have
\[(\psi_\zeta\circ f)(\gamma)=\bar f(\bar\gamma)<\bar\delta\le f(\bar\delta)=(\psi_\zeta\circ f)(\delta).\qedhere\]
\end{proof}

\begin{theorem}\label{thm35} Suppose:
\begin{itemize}
\item $\lambda$ is a singular cardinal;
\item $\vec f = \langle f_\beta\mid\beta<\lambda^+\rangle$ is a scale for $\lambda$;
\item  $\nu$ is a regular uncountable cardinal $\neq\cf(\lambda)$;
\item $S$ and $T$ are subsets of $\lambda^+$;
\item $G(\vec f)\cap \Tr(S)\cap E^{\lambda^+}_{\nu}\cap T$ is stationary.\footnote{Recall Definition~\ref{def22}.}
\end{itemize}
For every (finite or infinite) cardinal $\theta\le\nu$, if any of the following holds true:
\begin{enumerate}
\item[(i)] $\nu$ is an infinite successor cardinal and $\mathcal C(\nu,\theta)\le\lambda$;
\item[(ii)] $2^\nu\le\lambda$,
\end{enumerate}
then $\prt(S,\theta,T)$ holds.
\end{theorem}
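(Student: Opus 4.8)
The aim is, for each cardinal $\theta\le\nu$, to produce a partition $\langle S_i\mid i<\theta\rangle$ of $S$ together with a stationary $R'\subseteq R:=G(\vec f)\cap\Tr(S)\cap E^{\lambda^+}_\nu\cap T$ such that $S_i\cap\alpha$ is stationary in $\alpha$ for every $\alpha\in R'$ and every $i<\theta$; this suffices, since then $R'\subseteq T\cap\bigcap_{i<\theta}\Tr(S_i)$. Note that the stationarity hypothesis already forces $\nu<\lambda$. The plan is to use the scale to pull the splitting problem down to cofinality $\nu$ and then globalise it using the hypothesis on $\mathcal C(\nu,\theta)$.

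First I would read goodness off in a single coordinate. Fix $i^\bullet<\cf(\lambda)$ with $\lambda_{i^\bullet}>\nu$. For $\alpha\in R$, Fact~\ref{scales}\eqref{c3} supplies an exact upper bound $e_\alpha$ for $\vec f\restriction\alpha$ with $\cf(e_\alpha(i))=\nu$ for all $i\ge i^\bullet$, and goodness supplies a cofinal $A_\alpha\subseteq\alpha$ of order type $\nu$ and an index $j_\alpha\ge i^\bullet$ with $f_\gamma<^{j_\alpha}f_\delta$ for all $\gamma<\delta$ in $A_\alpha$. Using that an exact upper bound $e$ satisfies $e(i)\le u(i)$ for all large $i$ whenever $u$ is an upper bound, and that the coordinatewise supremum $u$ of $\vec f\restriction A_\alpha$ also satisfies $u(i)\le e_\alpha(i)$ for all large $i$ (here one splits on whether $\cf(\lambda)<\nu$ or $\cf(\lambda)>\nu$), we may enlarge $j_\alpha$ so that $\zeta_\alpha:=e_\alpha(j_\alpha)=\sup_{\gamma\in A_\alpha}f_\gamma(j_\alpha)$, an ordinal of cofinality $\nu$, and so that $\langle f_\gamma(j_\alpha)\mid\gamma\in A_\alpha\rangle$ is strictly increasing and cofinal in $\zeta_\alpha$. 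Set $g_\alpha(\beta):=\sup_{\gamma\in A_\alpha\cap\beta}f_\gamma(j_\alpha)$ for $\beta\in\acc^+(A_\alpha)$; this is strictly increasing on the club $\acc^+(A_\alpha)$ with supremum $\zeta_\alpha$, and since $\alpha\in\Tr(S)$ the set $s_\alpha:=S\cap\acc^+(A_\alpha)$ is stationary in $\alpha$ with $g_\alpha\restriction s_\alpha$ strictly increasing and converging to $\zeta_\alpha$.

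Next, fixing once and for all the maps $\psi_\zeta$ from Lemma~\ref{localtoglobal} for ordinals $\zeta$ of cofinality $\nu$, apply that lemma to $g_\alpha$ and $s_\alpha$ to obtain a club $c_\alpha\subseteq\alpha$ on which $\rho_\alpha:=(\psi_{\zeta_\alpha}\circ g_\alpha)\restriction(c_\alpha\cap s_\alpha)$ is strictly increasing. Then $t_\alpha:=c_\alpha\cap s_\alpha\subseteq S$ is stationary in $\alpha$ and $\rho_\alpha$ is an order-isomorphism of $t_\alpha$ onto a subset $B_\alpha$ of $\nu$ necessarily of order type $\nu$, whence $\acc^+(B_\alpha)$ is a club of $\nu$; an easy lemma shows that $\rho_\alpha^{-1}[\iota[E]]$ is stationary in $\alpha$ whenever $E\subseteq\nu$ is stationary and $\iota\colon\nu\to B_\alpha$ is the increasing enumeration. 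In case (ii) Lemma~\ref{lemmaC} gives $\mathcal C(\nu,\theta)\le 2^\nu\le\lambda$, so in both cases we may fix $\{c^\xi\mid\xi<\lambda\}\subseteq[\nu]^\theta$ such that every club of $\nu$ contains some $c^\xi$, and for $\alpha\in R$ choose $\xi(\alpha)<\lambda$ with $c^{\xi(\alpha)}\subseteq\acc^+(B_\alpha)$. The triple $(j_\alpha,\zeta_\alpha,\xi(\alpha))$ codes an ordinal below $\lambda$, hence is regressive on the stationary set $R\cap[\lambda,\lambda^+)$, so by Fodor's lemma we pass to a stationary $R'\subseteq R$ on which it is constantly $(j^*,\zeta^*,\xi^*)$. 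At this point $\psi_{\zeta^*}$ and the template $c^{\xi^*}\in[\nu]^\theta$ are $\alpha$-independent while $c^{\xi^*}\subseteq\acc^+(B_\alpha)$ for every $\alpha\in R'$, and the partition $\langle S_i\mid i<\theta\rangle$ of $S$ is to be built from the pointwise data $\beta\mapsto\psi_{\zeta^*}(f_\beta(j^*))$, an enumeration of $c^{\xi^*}$, and a fixed partition of $\nu$ into $\theta$ stationary sets, so arranged that each $S_i\cap t_\alpha$ pulls back along $\rho_\alpha$ to a stationary subset of $B_\alpha$.

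The main obstacle is exactly this globalisation. Although $\psi_{\zeta^*}$ and $c^{\xi^*}$ have been pinned down, the auxiliary objects $A_\alpha$ — and hence $g_\alpha$, $c_\alpha$, $\rho_\alpha$, $B_\alpha$ — still vary with $\alpha$, whereas the colour of a point $\beta$ must be computed from $f_\beta$ alone; so one must show that, after the Fodor reduction, $\beta\mapsto\psi_{\zeta^*}(f_\beta(j^*))$ already tracks $\rho_\alpha$ on $t_\alpha$ closely enough to carry the template $c^{\xi^*}$ uniformly in $\alpha\in R'$. It is here that the two hypotheses part company: when $\nu$ is a successor one feeds in a club-guessing sequence on $\nu$ (with Ulam-matrix bookkeeping) to lay the template along the various $B_\alpha$ coherently, while the bound $2^\nu\le\lambda$ instead affords enough room to diagonalise directly against all clubs of $\nu$. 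The remaining points — the cofinality bookkeeping in the first step and the transfer lemma in the second — should be routine.
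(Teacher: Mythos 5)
There is a genuine gap, and you have in effect flagged it yourself: the entire content of the theorem is the construction of a single function $h:S\rightarrow\theta$ whose value at $\beta$ is computed from $f_\beta$ alone and which nevertheless realizes every colour stationarily below every $\alpha$ in a fixed stationary set, and this is exactly the step your write-up defers as ``the main obstacle''. Moreover, the reduction you set up beforehand makes that step unreachable. The local strictly increasing function you feed into Lemma~\ref{localtoglobal} is $g_\alpha(\beta)=\sup_{\gamma\in A_\alpha\cap\beta}f_\gamma(j_\alpha)$, which depends on the auxiliary set $A_\alpha$; for $\beta\in S\cap\acc^+(A_\alpha)$ one only knows $f_\gamma<^*f_\beta$ for each $\gamma\in A_\alpha\cap\beta$ separately, at coordinates depending on $\gamma$, so $f_\beta(j^*)$ --- and hence $\psi_{\zeta^*}(f_\beta(j^*))$ --- bears no controlled relation to $g_\alpha(\beta)$, and the asserted ``tracking'' has no basis. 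The paper's proof instead works from the outset with the genuinely global function $\beta\mapsto f_\beta(i^*)$ restricted to $S\cap\alpha$: Claim~\ref{claim351} uses the exact upper bound to show its values accumulate stationarily at every level below a fixed $\varepsilon$ of cofinality $\nu$, and Claim~\ref{claim212} then gives the key dichotomy --- either $\beta\mapsto f_\beta(i^*)$ is strictly increasing on a stationary subset of $S\cap\alpha$ (and only then does Lemma~\ref{localtoglobal} enter, followed by a uniformized $g\in{}^\nu\theta$ under $2^\nu\le\lambda$, or an Ulam matrix plus a $\mathcal C(\nu,\theta)$-template when $\nu$ is a successor), or the sets $\{\beta\in S\cap\alpha\mid\gamma<f_\beta(i^*)<\delta\}$ are stationary for all pairs $\gamma<\delta$ from some $D_\alpha\in[E]^\nu$, in which case $h(\beta):=\sup(\otp(f_\beta(i^*)\cap D))$ works for a $\mathcal C(\nu,\theta)$-guessed $D$. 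Nothing in your proposal establishes either horn of such a dichotomy for a globally defined colouring, and ``club-guessing with Ulam-matrix bookkeeping'' does not address the fact that your colour is not a function of $\beta$.

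Separately, the ``easy lemma'' you invoke is false: the image of a stationary $E\subseteq\nu$ under the increasing enumeration of a stationary set of order type $\nu$ need not be stationary, because that enumeration is not continuous. For instance, take $\nu=\alpha=\omega_1$ and $t:=\omega_1\setminus E$ for a stationary, co-stationary set $E$ of limit ordinals; then $t$ is stationary, its increasing enumeration sends each $\xi\in E$ to $\xi+1$, and so $E$ is mapped onto a set of successor ordinals. The correct substitute is precisely the Fodor-style argument the paper runs in Lemma~\ref{localtoglobal} and in Claims~\ref{claim212} and~\ref{claim343}: either the relevant map dominates the transitive collapse on a club, or pressing down produces a stationary fibre, and one must argue which alternative holds rather than assert a transfer of stationarity. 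As it stands, both the template-transfer and the final globalisation are unproved, and the latter is the theorem.
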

\begin{proof} Let $\theta\le\nu$ be cardinal satisfying Clause (i) or (ii).
We shall prove that $\prt(S,\theta,T)$ holds by exhibiting a function $h:S\rightarrow\theta$ and some stationary $T'\s T$ such that $T'\s \Tr(h^{-1}\{\tau\})$ for all $\tau<\theta$.

Denote $T_0:=G(\vec f)\cap \Tr(S)\cap E^{\lambda^+}_{\nu}\cap T$.
For every $i<\cf(\lambda)$, denote $\lambda_i:=\sup\{ f_\beta(i)\mid \beta<\lambda^+\}$,
so that $\vec\lambda:=\langle\lambda_i\mid i<\cf(\lambda)\rangle$ is a strictly increasing sequence of regular cardinals, converging to $\lambda$.
Let $k<\cf(\lambda)$ be the least to satisfy $\lambda_k>\nu$.

\begin{claim}\label{claim351} Let $\alpha \in T_0$. There exist $i\in\cf(\lambda)\setminus k$ and $\varepsilon\in E^{\lambda_i}_\nu$ 
such that, for every $\gamma<\varepsilon$,
$\{\beta \in S \cap \alpha\mid \gamma\le f_\beta(i) <\varepsilon\}$ is stationary in $\alpha$.
\end{claim}
\begin{proof} If $\alpha\in V(\vec f)$,
then pick a club $C$ in $\alpha$ of order-type $\nu$
and a large enough $i\in\cf(\lambda)\setminus k$ such that $\langle f_\beta(i)\mid \beta\in C\rangle$ is strictly increasing.
Evidently, in this case, $i$ and $\varepsilon:=\sup_{\beta\in C}f_\beta(i)$ are as sought.
Next, suppose that $\alpha\notin V(\vec f)$, so that $\cf(\alpha)\ge\cf(\lambda)$.
Recalling that $\cf(\alpha)=\nu$ and $\nu\neq\cf(\lambda)$, this must mean that $\nu>\cf(\lambda)$.

Since $\alpha\in T_0\s G(\vec f)\cap E^{\lambda^+}_{>\cf(\lambda)}$,
we use Fact~\ref{scales}(\ref{c2}) to fix an exact upper bound $e_\alpha\in\prod\vec\lambda$ for $\vec f\restriction\alpha$ such that $\cf(e_\alpha(i))= \nu$ for all $i\in \cf(\lambda)\setminus k$.
Of course, we may also assume that $e_\alpha(i)>0$ for all $i<k$.
We will show that there exists $i\in\cf(\lambda)\setminus k$ for which $\varepsilon:=e_\alpha(i)$ satisfies the conclusion of the claim.

Define $g:\cf(\lambda)\rightarrow\lambda$ by letting, for all $i<k$, $g(i):=0$, and, for all $i\in\cf(\lambda)\setminus k$,
$$g(i):=\sup\{\gamma<e_\alpha(i)\mid \{\beta \in S \cap \alpha\mid \gamma\le f_\beta(i)< e_\alpha(i)\} \text{ is stationary in }\alpha\}+1.$$
Towards a contradiction, suppose that $g\in\prod_{i<\cf(\lambda)}e_\alpha(i)$.
For each $i\in \cf(\lambda)\setminus k$, pick a club $C_i$ in $\alpha$ such that, for all $\beta \in S\cap C_i$, $f_\beta(i)\notin [g(i),e_\alpha(i))$.
As $\cf(\alpha)>\cf(\lambda)$,  $C := \bigcap_{i\in\cf(\lambda)\setminus k}C_i$ is a club in $\alpha$.
By the choice of $e_\alpha$ and as $\cf(\alpha)>\cf(\lambda)$, we may also fix a stationary subset $B\s S\cap C$ and a large enough $j\in\cf(\lambda)\setminus k$
such that, for all $\beta\in B$, $f_\beta<^je_\alpha$.
It follows that, for all $\beta \in B$,  $f_\beta<^jg$.
But $B$ is cofinal in $\alpha$, so that, for all $\beta<\alpha$,  $f_\beta<^* g$.
This is a contradiction to the facts that $g\in\prod_{i<\cf(\lambda)}e_\alpha(i)$ and that $e_\alpha$ is an exact upper bound for $\vec f\restriction \alpha$.
\end{proof}

For each $\alpha\in T_0$, fix $i_\alpha$ and $\varepsilon_\alpha$ as in the claim.
Then fix some stationary $T_1 \s T_0$ along with $i^*<\cf(\lambda)$ and $\varepsilon\in E^{\lambda_{i^*}}_\nu$ such that $i_\alpha=i^*$ and $\varepsilon_\alpha=\varepsilon$ for all $\alpha\in T_1$.
Let $E$ be some club in $\varepsilon$ of order-type $\nu$.

\begin{claim}\label{claim212} Let $\alpha\in T_1$. Then at least one of the following holds true:\footnote{The first alternative is quite prevalent,
so that the second alternative is here for the rescue just in case that $\alpha$ is a good point which is not \emph{better} (see \cite[\S4]{MR2768694}) and $S\cap E^\alpha_{\neq\cf(\lambda)}$ is nonstationary.}
\begin{enumerate}
\item $\delta\mapsto f_\delta(i^*)$ is strictly increasing over some stationary subset of $S\cap\alpha$.
\item there is $D\in [E]^\nu$ such that, for any pair of ordinals $\gamma<\delta$ from $D$,
$\{\beta \in S \cap \alpha\mid \gamma< f_\beta(i^*)<\delta\}$ is stationary in $\alpha$.
\end{enumerate}
\end{claim}
\begin{proof} Suppose that Clause~(1) fails.
As $\cf(\varepsilon)=\nu$, to prove that Clause~(2) holds,
it suffices to show that, for all $\gamma\in E$, there is a large enough $\delta \in E$ such that
$\{\beta \in S \cap \alpha\mid \gamma<f_\beta(i^*) <\delta\}$ is stationary in $\alpha$.
Thus, let $\gamma\in E$ be arbitrary.

Fix a strictly increasing function $\pi_0:\nu\rightarrow\alpha$ whose image is a club in $\alpha$,
and a strictly increasing function $\pi_1:\nu\rightarrow \varepsilon$ whose image is $E$.
As $\alpha\in T_1$ and $i^*=i_\alpha$, we infer that
$$\bar S:=\pi_0^{-1}\{\beta \in S \cap \alpha\mid \gamma+1\le f_\beta(i^*)<\varepsilon\}$$ is stationary in $\nu$.
Define a function $\phi:\bar S\rightarrow\nu$ by stipulating:
$$\phi(\bar\beta):=\min\{\bar\delta<\nu\mid f_{\pi_0(\bar\beta)}(i^*)<\pi_1(\bar\delta)\}.$$
Let $\hat C:=\{\bar\beta\in\bar S\mid \phi[\bar\beta]\s\bar\beta\}$ and $\hat S:=\{ \bar\beta\in \bar S\mid \phi(\bar\beta)<\bar\beta\}$.

Note that if $\hat S$ is nonstationary, then $R:=\pi_0[\hat C\setminus \hat S]$ is a stationary subset of $S\cap\alpha$,
and for any pair of ordinals $\beta<\beta'$ from $R$, we have $$\phi(\pi_0^{-1}(\beta))<\pi_0^{-1}(\beta')\le \phi(\pi_0^{-1}(\beta')),$$
meaning that $f_\beta(i^*)<\pi_1(\phi(\pi_0^{-1}(\beta)))\le f_{\beta'}(i^*)$, and contradicting the fact that Clause~(1) fails.
So $\hat S$ must be stationary.

Fix a stationary subset $S'\s\hat S$ on which $\phi$ is constant, with value, say, $\bar\delta$.
Put $\delta:=\pi_1(\bar\delta)$, so that $\delta\in E$. Then $\pi_0[S']$ is a stationary subset of $S\cap\alpha$,
and, for all $\beta\in \pi_0[S']$, we have $\gamma<\gamma+1\le f_\beta(i^*)<\pi_1(\bar\delta)=\delta$, as sought.
\end{proof}

Let $T_2$ denote the set of all $\alpha\in T_1$ for which Clause~(2) of Claim~\ref{claim212} holds.

\medskip

\underline{Case 1.}
Suppose that $T_2$ is stationary.
For each $\alpha\in T_2$, fix some $D_\alpha$ as in the claim.
By replacing $D_\alpha$ with its closure, we may assume that $D_\alpha$ is a subclub of $E$.
As $E$ is the order-preserving continuous image of $\nu$ and as $\mathcal C(\nu,\theta)\le\lambda$,\footnote{Recall that by Lemma~\ref{lemmaC}, $\mathcal C(\nu,\theta)\le2^\nu$.}
we may fix some stationary $T_3 \s T_2$ and some  $D\s E$ of order-type $\theta$ such that $D\s D_\alpha$ for all $\alpha\in T_3$.
Define $h:S\rightarrow\theta$ by letting $h(\beta):=0$ whenever $f_\beta(i^*)\ge\sup(D)$, and $h(\beta):=\sup(\otp(f_\beta(i^*)\cap D))$, otherwise.
We claim that $T_3\s\Tr(h^{-1}\{\tau\})$ for all $\tau<\theta$.
To see this, let $\alpha\in T_3$ and $\tau<\theta$ be arbitrary. Let $\gamma$ denote the unique element of $D$ such that $\otp(D\cap\gamma)=\tau$.
Let $\delta:=\min(D\setminus(\gamma+1))$. As $\gamma<\delta$ is a pair of elements from $D_\alpha$, we know that $S':=\{ \beta\in S\cap\alpha\mid \gamma<f_\beta(i^*)<\delta\}$ is stationary.
Now, for each $\beta\in S'$, we have $h(\beta)=\sup(\otp(f_\beta(i^*)\cap D))=\sup(\otp((\gamma+1)\cap D))=\sup(\tau+1)=\tau$.\footnote{Note that in this case, we did not need to assume that $\nu$ is a successor cardinal.}

\medskip

\underline{Case 2.} Suppose that $T_2$ is nonstationary.
Define $f:\lambda^+\rightarrow\lambda_{i^*}$ by letting $f(\delta):=f_\delta(i^*)$ for all $\delta<\lambda^+$. 
As $T_2$ is nonstationary, the set $T_4$ of all $\alpha\in T_1$ for which $\delta\mapsto f(\delta)$ is injective over some stationary $S_\alpha\s S\cap\alpha$, is stationary.
Fix $\zeta\le\lambda_{i^*}$ and some stationary subset $T_5\s T_4$ such that, for all $\alpha\in T_5$, $\sup(f[S_\alpha])=\zeta$.\footnote{Indeed, this means that in this case, back at the beginning, we could have chosen $\varepsilon$ to be $\zeta$.}
Let $\psi_\zeta$ be given by Lemma~\ref{localtoglobal}, and then set $\varphi:=(\psi_\zeta\circ f)\restriction S$.
Then $\varphi$ is a function from $S$ to $\nu$ with the property that, for all $\alpha\in T_5$,
there exists a stationary $s_\alpha\s S\cap\alpha$ on which $\varphi$ is strictly increasing.

\medskip

\underline{Case 2.1.}  Suppose that $2^\nu\le\lambda$.
For each $g\in{}^\nu\theta$, we attach a function $h_g:S\rightarrow\theta$ by letting $h_g:=g\circ\varphi$.
We claim that for every $\alpha\in T_5$, there is some $g_\alpha \in{}^\nu\theta$ such that, for all $\tau<\theta$,
$h_{g_\alpha}^{-1}\{\tau\}\cap\alpha$ is stationary in $\alpha$.
Indeed, given $\alpha\in T_5$, we fix a stationary $s_\alpha\s S\cap\alpha$ on which $\varphi$ is injective,
then fix a partition $\langle R_\tau\mid\tau<\theta\rangle$ of $s_\alpha$ into stationary sets,
and then pick  $g:\nu\rightarrow\theta$ such that, for all $\tau<\theta$ and $\delta\in R_\tau$, $g(\varphi(\delta))=\tau$.
Evidently, for all $\tau<\theta$, $h_g^{-1}\{\tau\}$ covers the stationary set $R_\tau$.

Now, as $2^\nu\le\lambda$, fix some stationary $T_6\s T_5$ and some $g\in{}^\nu\theta$ such that $g_\alpha=g$ for all $\alpha\in T_6$.
Then $T_6\s\Tr(h_g^{-1}\{\tau\})$ for all $\tau<\theta$.

\medskip

\underline{Case 2.2.} Suppose that $2^\nu>\lambda$, so that $\nu$ is a successor cardinal, say $\nu=\chi^+$.
Let $\langle A_{\xi,\eta}\mid \xi<\nu, \eta<\chi\rangle$ be an Ulam matrix over $\nu$ \cite{ulam1930masstheorie}. That is:
\begin{itemize}
\item for all $\xi<\nu$,  $|\nu\setminus\bigcup_{\eta<\chi}A_{\xi,\eta}|\le\chi$;
\item for all $\eta<\chi$ and $\xi<\xi'<\nu$, $A_{\xi,\eta}\cap A_{\xi',\eta}=\emptyset$.
\end{itemize}

\begin{claim}\label{claim343} Let $\alpha\in T_6$. There exist $\eta<\chi$ and $x\in[\nu]^\nu$ such that, for all $\xi\in x$, $\varphi^{-1}[A_{\xi,\eta}]\cap\alpha$ is stationary in $\alpha$.
\end{claim}
\begin{proof} Suppose not.
Then, for all $\eta<\chi$, the set $x_\eta:=\{ \xi<\nu\mid \varphi^{-1}[A_{\xi,\eta}]\cap\alpha\allowbreak\text{ is stationary in }\alpha\}$ has size $\le\chi$.
So $X:=\bigcup_{\eta<\chi}x_\eta$ has size $\le\chi$, and we may fix $\xi\in\nu\setminus X$.
It follows that, for all $\eta<\chi$, $\varphi^{-1}[A_{\xi,\eta}]\cap\alpha$ is nonstationary in $\alpha$.
Consequently, $\varphi^{-1}[\bigcup_{\eta<\chi}A_{\xi,\eta}]\cap\alpha$ is nonstationary in $\alpha$.
However, $\bigcup_{\eta<\chi}A_{\xi,\eta}$ contains a tail of $\nu$, contradicting the fact that there exists a stationary $s_\alpha\s S\cap\alpha$ on which $\varphi$ is strictly increasing and converging to $\nu$.
\end{proof}

For each $\alpha\in T_6$, fix $\eta_\alpha$ and $x_\alpha$ as in the claim.
As $\mathcal C(\nu,\theta)\le\lambda$,
fix some stationary $T_7\s T_6$ along with $\eta<\chi$ and  $x\s\nu$ of order-type $\theta$
such that $\eta_\alpha=\eta$ and $x\s \acc^+(x_\alpha)$ for all $\alpha\in T_7$.
Let $h:S\rightarrow\theta$ be any function satisfying $h(\delta):=\sup(\otp(x\cap\xi))$ whenever $\varphi(\delta)\in A_{\xi,\eta}$.
We claim that $T_7\s\Tr( h^{-1}\{\tau\})$ for all $\tau<\theta$.
To see this, let $\alpha\in T_7$ and $\tau<\theta$ be arbitrary.
Let $\xi'$ denote the unique element of $x$ such that $\otp(x\cap\xi')=\tau$.
Put $\xi:=\min(x_\alpha\setminus(\xi'+1))$. As $x\s\acc^+(x_\alpha)$, we know that $[\xi',\xi)\cap x=\{\xi'\}$, so that $\otp(x\cap\xi)=\otp(x\cap(\xi'+1))=\tau+1$.
As $\eta_\alpha=\eta$ and $\xi\in x_\alpha$, the set $S':=\varphi^{-1}[A_{\xi,\eta}]\cap\alpha$ is a stationary subset of $S\cap\alpha$.
Now, for each $\delta\in S'$, we have $\varphi(\delta)\in A_{\xi,\eta}$, meaning that $h(\delta)=\sup(\otp(x\cap\xi))=\sup(\tau+1)=\tau$, as sought.
\end{proof}

\begin{remarks}
\begin{enumerate}
\item It follows from Theorem~\ref{thm35} together with Lemma~\ref{lemmaC} and Fact~\ref{scales} that for every singular cardinal $\lambda$ there exists a partition of $\lambda^+$ into $\lambda$ many reflecting stationary sets.
\item By appealing to a refinement of Fact~\ref{scales}(2), implicitly stated in \cite[Footnote~5]{MR2608402},\footnote{See Lambie-Hanson's answer in \url{https://mathoverflow.net/questions/296225}.}
we infer from Theorem~\ref{thm35} and Lemma~\ref{lemmaC}
that for every singular cardinal $\lambda$, every regular cardinal $\theta$ with $\cf(\lambda)<\theta<\lambda$,
every $S\s\lambda^+$, and every stationary $T\s\Tr(S)\cap E^{\lambda^+}_{\theta^{+3}}$, $\prt(S,\theta^+,T)$ holds.
\end{enumerate}
\end{remarks}

We now prove a variation of Theorem~\ref{thm35} that, compared to its Clause~(i), does not require $\nu$ to be a successor cardinal.
\begin{theorem}\label{thm37} Suppose:
\begin{itemize}
\item $\lambda$ is a singular cardinal;
\item $\vec f = \langle f_\beta\mid\beta<\lambda^+\rangle$ is a scale for $\lambda$;
\item $\chi<\mu<\nu$ are cardinals in $\reg(\lambda)\setminus\{\cf(\lambda)\}$;
\item $S\s E^{\lambda^+}_\mu$ and $T\s E^{\lambda^+}_\nu$ are sets;
\item $G(\vec f)\cap \Tr(S)\cap T$ is stationary.
\end{itemize}
For every  cardinal $\theta\le\nu$ satisfying  $\mathcal C(\nu,\theta)\le\lambda$,\footnote{Note that if $\nu$ is not a successor cardinal, then, by Lemma~\ref{lemmaC}, $\mathcal C(\nu,\theta)<\lambda$ for all $\theta<\nu$.}
$\prt(S,\theta,T)$ holds.
\end{theorem}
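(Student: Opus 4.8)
The plan is to retrace the proof of Theorem~\ref{thm35}, pinpointing the one step where its Clause~(i) exploited $\nu$ being a successor cardinal — the Ulam matrix over $\nu$ in Case~2.2 — and to replace it by a construction combining club-guessing with an Ulam matrix over a successor cardinal strictly below $\nu$. First, if $\nu$ is itself a successor cardinal the conclusion is immediate from Theorem~\ref{thm35}(i), since the hypothesis $T\s E^{\lambda^+}_\nu$ makes ``$G(\vec f)\cap\Tr(S)\cap T$ is stationary'' coincide with the stationarity requirement there (and $\mu,\chi$ play no role); so I would assume $\nu$ is a regular limit cardinal, noting in any case that $\chi<\mu<\nu$ forces $\chi^+<\nu$. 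Next I would run, verbatim, the part of the proof of Theorem~\ref{thm35} that introduces $\langle\lambda_i\mid i<\cf(\lambda)\rangle$, an index $i^*<\cf(\lambda)$, an ordinal $\varepsilon\in E^{\lambda_{i^*}}_\nu$, a stationary $T_1\s T_0:=G(\vec f)\cap\Tr(S)\cap T$ as in Claim~\ref{claim351}, a club $E$ in $\varepsilon$ of order-type $\nu$, and the dichotomy of Claim~\ref{claim212}; if the set $T_2$ of $\alpha\in T_1$ realizing its alternative~(2) is stationary, Case~1 of Theorem~\ref{thm35} applies word for word, using only $\mathcal C(\nu,\theta)\le\lambda$. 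So the heart of the matter is when $T_2$ is nonstationary, in which case the argument of Case~2 of Theorem~\ref{thm35} still yields a function $\varphi\colon S\to\nu$ and a stationary $T_5\s T_1$ such that for every $\alpha\in T_5$ there is a stationary $s_\alpha\s S\cap\alpha$ — a subset of $S\s E^{\lambda^+}_\mu$ — on which $\varphi$ is strictly increasing and converging to $\nu$.

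From here, since $2^\nu\le\lambda$ is not assumed and $\nu$ is not a successor cardinal, neither Case~2.1 nor Case~2.2 of Theorem~\ref{thm35} is available, and I would manufacture a substitute for the missing Ulam matrix over $\nu$ as follows. Using $\chi^+<\nu$, fix by Shelah's club-guessing theorem a sequence $\langle c_\rho\mid\rho\in E^\nu_\chi\rangle$ with each $c_\rho$ a club in $\rho$ of order-type $\chi$ and with $\{\rho\in E^\nu_\chi\mid c_\rho\s c\}$ stationary for every club $c$ in $\nu$. Feeding this into a genuine Ulam matrix over $\chi^+$ — and using that $S\s E^{\lambda^+}_\mu$, so that the points of each $s_\alpha$ all have cofinality $\mu>\chi$ (this is where it matters that $\chi,\mu\in\reg(\lambda)\setminus\{\cf(\lambda)\}$, allowing a secondary descent to a successor cardinal) — I would assemble a family $\langle A_{\xi,\eta}\mid\xi<\nu,\ \eta<\chi\rangle$ of subsets of $\nu$ that, against the cofinal-in-$\nu$ restrictions of $\varphi$, does the job an Ulam matrix over $\nu$ would do: for each $\eta<\chi$ the sets $A_{\xi,\eta}$, $\xi<\nu$, are pairwise disjoint, while for each $\alpha\in T_5$ there are $\eta_\alpha<\chi$ and $x_\alpha\in[\nu]^\nu$ with $\varphi^{-1}[A_{\xi,\eta_\alpha}]\cap\alpha$ stationary in $\alpha$ for all $\xi\in x_\alpha$. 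Proving this last statement — the analogue of Claim~\ref{claim343} — is, I expect, the main obstacle: a naive construction inside a single guessed club fails, because $\varphi\restriction s_\alpha$ being cofinal in $\nu$ forces the preimage of any bounded subset of $\nu$ to be bounded, hence nonstationary, in $\alpha$; so the substitute family must genuinely spread across $\nu$ while remaining row-disjoint, and it is exactly the concentration of $S$ on a single cofinality $\mu$, with a regular cardinal $\chi$ to spare below it, that I would use to achieve this.

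Granting that, the endgame mirrors the last paragraph of the proof of Theorem~\ref{thm35}. There are at most $\chi\cdot\mathcal C(\nu,\theta)\le\lambda$ possibilities for the pair made of $\eta_\alpha$ together with a $\theta$-sized subset of $\acc^+(x_\alpha)$ taken from a fixed family of size $\mathcal C(\nu,\theta)$; since $T_5$ is stationary in $\lambda^+$, I would fix a stationary $T_6\s T_5$, an $\eta<\chi$, and an $x\s\nu$ of order-type $\theta$ with $\eta_\alpha=\eta$ and $x\s\acc^+(x_\alpha)$ for all $\alpha\in T_6$, and then define $h\colon S\to\theta$ by $h(\delta):=\sup(\otp(x\cap\xi))$ whenever $\varphi(\delta)\in A_{\xi,\eta}$, and $h(\delta):=0$ otherwise. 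To see that $T_6\s\Tr(h^{-1}\{\tau\})$ for every $\tau<\theta$, given $\alpha\in T_6$ and $\tau<\theta$ one takes the element $\xi'$ of $x$ with $\otp(x\cap\xi')=\tau$, puts $\xi:=\min(x_\alpha\setminus(\xi'+1))$ so that $\otp(x\cap\xi)=\tau+1$, and observes that $\varphi^{-1}[A_{\xi,\eta}]\cap\alpha$ is a stationary subset of $S\cap\alpha$ on which $h$ takes the constant value $\tau$. Hence $\langle h^{-1}\{\tau\}\mid\tau<\theta\rangle$ witnesses $\prt(S,\theta,T)$.
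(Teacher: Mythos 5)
Your reduction to the case of a regular limit $\nu$ (for $\nu$ a successor the statement does indeed follow from Theorem~\ref{thm35}(i)), your re-use of Claims \ref{claim351} and \ref{claim212}, and your observation that Case~1 of the proof of Theorem~\ref{thm35} needs only $\mathcal C(\nu,\theta)\le\lambda$ are all fine. But the proposal has a genuine gap exactly where you flag it: the ``substitute Ulam matrix'' $\langle A_{\xi,\eta}\mid\xi<\nu,\ \eta<\chi\rangle$ of subsets of $\nu$ is never constructed, and the analogue of Claim~\ref{claim343} is asserted rather than proved. Worse, the data you carry into that step is too weak for the plan to be completable. All you know about $\varphi$ below a given $\alpha\in T_5$ is that it is strictly increasing and cofinal in $\nu$ on a stationary $s_\alpha\s S\cap\alpha$ with $s_\alpha\s E^{\lambda^+}_\mu$. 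Whether $\varphi^{-1}[A]\cap\alpha$ is stationary is then governed (from below) by the $\nu$-complete ideal on $\nu$ obtained by pushing $\ns_\alpha\restriction s_\alpha$ forward along the injection $\varphi\restriction s_\alpha$, and a single row-disjoint $\chi\times\nu$ matrix of subsets of $\nu$ would have to produce, for every one of these ideals simultaneously, a row with $\nu$ many positive columns. Neither the club-guessing sequence on $E^\nu_\chi$ nor an Ulam matrix over $\chi^+$ gives a handle on these ideals: the cofinality $\mu$ of the points of $s_\alpha$ becomes invisible once you pass to the set $\varphi[s_\alpha]\s\nu$, and the mechanism by which the paper exploits $\cf(\beta)=\mu$ --- that the canonical club of a point $\beta\in S$ meets the set on which $\varphi$ is controlled --- is unavailable to you, because a set of $\mu$-cofinal ordinals is nonstationary in every ordinal of cofinality $\mu$, so no $\beta\in S$ reflects your $s_\alpha$.

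The paper's proof does not pass through the $T_2$ dichotomy at all; it restructures the first half of the argument instead of patching its last step. Using the goodness of $\alpha$ together with $\chi,\mu,\nu\neq\cf(\lambda)$, it first produces, for each $\alpha\in T_0$, an index $i_\alpha<\cf(\lambda)$ and a set $S_\alpha\s E^\alpha_\chi$ --- concentrating on cofinality $\chi$, strictly below $\mu$ --- such that $\gamma\mapsto f_\gamma(i_\alpha)$ is strictly increasing on $S_\alpha$ and, crucially, $\Tr(S_\alpha)\cap S$ is stationary in $\alpha$. After uniformizing $i_\alpha=i^*$ and the limit value $\zeta$ and applying Lemma~\ref{localtoglobal}, the resulting $\varphi$ is increasing on (a club relative to) $S_\alpha$, not on a subset of $S$, and the matrix is taken to be $A_{\xi,\eta}:=\{\beta\in S\mid \varphi(\pi_\beta(\eta))=\xi\}$ with $\mu$ rows: a point $\beta\in S$ is classified by the value of $\varphi$ at the $\eta$-th point of a fixed club $\pi_\beta[\mu]$ in $\beta$. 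Row-disjointness is then automatic, and the Ulam-type positivity statement (Claim~\ref{claim353}) is obtained by a pressing-down argument over the stationary set of $\beta\in\Tr(S_\alpha)\cap S$ below $\alpha$, using $\cf(\alpha)=\nu>\mu>\chi$. This two-level descent --- first to cofinality $\chi$ so that points of $S$ can reflect the set carrying the increasing function, then to the clubs inside the points of $S$ to index the matrix --- is the idea your proposal is missing and would need to reinvent.
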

\begin{proof} Set $T_0:=G(\vec f)\cap\Tr(S)\cap T$.
\begin{claim} Let $\alpha\in T_0$.
There exist $i_\alpha<\cf(\lambda)$ and $S_\alpha\s E^\alpha_\chi$ such that $\Tr(S_\alpha)\cap S$ is stationary in $\alpha$,
and $\langle f_\gamma(i_\alpha)\mid \gamma\in S_\alpha\rangle$ is strictly increasing.
\end{claim}
\begin{proof}
 As $\alpha$ is good, let us fix a cofinal $A\s\alpha$ and $i<\cf(\alpha)$
such that, for all $\delta<\gamma$ from $A$, $f_\delta<^i f_\gamma$.
Now, for every $\gamma\in\acc^+(A)\cap E^\alpha_\chi$, since $\chi\neq\cf(\lambda)$,
we may fix a cofinal $a_\gamma\s A\cap\gamma$ along with $i_\gamma<\cf(\lambda)$ such that, for all $\delta\in a_\gamma$, $f_\delta<^{i_\gamma}f_\gamma$.
By possibly increasing $i_\gamma$, we may also assume that $f_\gamma<^{i_\gamma}f_{\min(A\setminus(\gamma+1))}$.
Next, for every $\beta\in S\cap\acc(\acc^+(A))$, since $\cf(\beta)=\mu$ and $\mu\neq\cf(\lambda)$, we may find some $i_\beta<\cf(\lambda)$
along with a stationary $S_\beta\s\acc^+(A)\cap E^\beta_\chi$ such that, for all $\gamma\in S_\beta$, $i_\gamma=i_\beta$.
Then, since $\nu\neq\cf(\lambda)$, we may find a stationary $B\s S\cap\acc(\acc^+(A))$ and $i_\alpha<\cf(\lambda)$ such that, for all $\beta\in B$,  $\max\{i_\beta,i\}=i_\alpha$.
Put $S_\alpha:=\bigcup\{ S_\beta\mid\beta\in B\}$. Trivially, $\Tr(S_\alpha)\cap S$ covers the stationary set $B$.
Now, let $\varepsilon<\gamma$ be an arbitrary pair of elements from $S_\alpha$. Find $\beta\le\beta'$ from $B$ such that $\varepsilon\in S_\beta$ and $\gamma\in S_{\beta'}$.
Let $\epsilon:=\min(A\setminus(\varepsilon+1))$. Since $\gamma\in S_{\beta'}\s\acc^+(A)$, we have $\varepsilon<\epsilon<\gamma$.
As $\sup(a_\gamma)=\gamma$, we may also fix $\delta\in a_\gamma$ above $\epsilon$,
so that $\varepsilon<\epsilon<\delta<\gamma$.
By the choice of $i_\varepsilon$ and $i_\gamma$, respectively,  we have $f_\varepsilon<^{i_\varepsilon}f_\epsilon$ and $f_\delta<^{i_\gamma}f_\gamma$.
As $\epsilon,\delta\in A$, we also have $f_\epsilon<^if_\delta$. But $i_\alpha=\max\{i_\beta,i_{\beta'},i\}=\max\{i_{\varepsilon},i_\gamma,i\}$,
so that, altogether, $f_{\varepsilon}<^{i_\alpha}f_\epsilon<^{i_\alpha}f_\delta<^{i_\alpha}f_\gamma$.
Thus, we have established that $\langle f_\gamma(i_\alpha)\mid \gamma\in S_\alpha\rangle$ is strictly increasing.
\end{proof}

For each $\alpha\in T_0$, fix $i_\alpha$ and $S_\alpha$ in the claim. Then find a stationary $T_1\s T_0$ along with $i^*<\cf(\lambda)$ and $\zeta<\lambda$ such that,
for all $\alpha\in T_1$, $i_\alpha=i^*$ and $\langle f_\gamma(i^*)\mid \gamma\in S_\alpha\rangle$ converges to $\zeta$. Define $f:\lambda^+\rightarrow\lambda_{i^*}$ by letting $f(\gamma):=f_\gamma(i^*)$ for all $\gamma<\lambda^+$.
Let $\psi_\zeta$ be given by Lemma~\ref{localtoglobal}, and then put $\varphi:=\psi_\zeta\circ f$. For each $\alpha\in T_1$, pick a club $C_\alpha\s\alpha$ such that $\varphi\restriction(C_\alpha\cap S_\alpha)$ is strictly increasing and converging to $\nu$.
For every $\beta\in S$, fix a strictly increasing function $\pi_\beta:\mu\rightarrow\beta$ whose image is a club in $\beta$. For all $\xi<\nu$ and $\eta<\mu$,  let $A_{\xi,\eta}:=\{\beta\in S\mid \varphi(\pi_\beta(\eta))=\xi\}$.
\begin{claim}\label{claim353} Let $\alpha\in T_1$. There exist $\eta<\mu$ and $x\in[\nu]^\nu$ such that, for all $\xi\in x$, $A_{\xi,\eta}\cap\alpha$ is stationary in $\alpha$.
\end{claim}
\begin{proof} Suppose not.
Then, for all $\eta<\mu$, the set $x_\eta:=\{ \xi<\nu\mid A_{\xi,\eta}\cap\alpha\text{ is}\allowbreak \text{ stationary in }\alpha\}$ has size $<\nu$.
So $X:=\bigcup_{\eta<\mu}x_\eta$ has size $<\nu$, and $\xi:=\sup(X)$ is smaller than $\nu$.
Pick $\gamma\in C_\alpha\cap S_\alpha$ with $\varphi(\gamma)>\xi$.

Next, fix a strictly increasing function $\pi_\alpha:\nu\rightarrow\alpha$ whose image is a club in $\alpha$.
Let $g:=\varphi\circ \pi_\alpha$, so that $g$ is a function from $\nu$ to $\nu$.
Consider  $D:=\{\bar\beta<\nu\mid g[\bar\beta]\s\bar\beta\}$ which is a club in $\nu$,
and  $$B:=\Tr(S_\alpha)\cap S\cap\acc(C_\alpha\setminus\gamma)\cap\acc(\pi_\alpha[D])$$
which is a stationary subset of $\alpha$. Let $\beta\in B$ be arbitrary.
We have that $S_\alpha\cap\beta$ is stationary in $\beta$ and $\im(\pi_\beta)\cap (C_\alpha\setminus\gamma)\cap\pi_\alpha[D]$ is a club in $\beta$, and hence we may find $\eta<\mu$ such that $\pi_\beta(\eta)\in S_\alpha\cap C_\alpha\cap\im(\pi_\alpha)\setminus(\gamma+1)$.
As  $\pi_\beta(\eta)>\gamma$ is a pair of ordinals of $C_\alpha\cap S_\alpha$, we infer that
 $\varphi(\pi_\beta(\eta))>\varphi(\gamma)>\xi$.
In addition, $\pi_\beta(\eta)\in\im(\pi_\alpha)\cap\beta$ and $\beta\in\pi_\alpha[D]$, so that $g(\pi_\alpha^{-1}(\pi_\beta(\eta)))<\pi_\alpha^{-1}(\beta)$.

Thus, we have established that for every $\beta\in B$, there exist $\eta_\beta<\mu$ such that $\xi<\varphi(\pi_{\beta}(\eta_\beta))<\pi_\alpha^{-1}(\beta)$.
As $B$ is stationary in $\alpha$ and $\cf(\alpha)=\nu>\mu$,
we may fix a stationary $B'\s B$ on which the function $\beta\mapsto\eta_\beta$ is constant with value, say, $\eta^*$. So $\bar\beta\mapsto \varphi(\pi_{\pi_\alpha(\bar\beta)}(\eta^*))$ is regressive over $\pi_\alpha^{-1}[B']$, and hence we may find a stationary $B''\s B'$  on which $\beta\mapsto\varphi(\pi_\beta(\eta^*))$ is constant with value, say, $\xi^*$. Then $A_{\xi^*,\eta^*}\cap\alpha$ covers the stationary set $B''$, contradicting the fact that $\xi^*>\xi=\sup(X)$.
\end{proof}
For each $\alpha\in T_1$, fix $\eta_\alpha$ and $x_\alpha$ as in the claim.
As $\mathcal C(\nu,\theta)\le\lambda$,
fix some stationary $T_2\s T_1$ along with $\eta<\chi$ and  $x\s\nu$ of order-type $\theta$
such that $\eta_\alpha=\eta$ and $x\s \acc^+(x_\alpha)$ for all $\alpha\in T_2$.
Let $h:S\rightarrow\theta$ be any function satisfying $h(\delta):=\sup(\otp(x\cap\xi))$ whenever $\delta\in A_{\xi,\eta}$.
We claim that $T_2\s\Tr( h^{-1}\{\tau\})$ for all $\tau<\theta$.
To see this, let $\alpha\in T_2$ and $\tau<\theta$ be arbitrary.
Let $\xi'$ denote the unique element of $x$ such that $\otp(x\cap\xi')=\tau$.
Put $\xi:=\min(x_\alpha\setminus(\xi'+1))$. As $x\s\acc^+(x_\alpha)$, we know that $[\xi',\xi)\cap x=\{\xi'\}$, so that $\otp(x\cap\xi)=\otp(x\cap(\xi'+1))=\tau+1$.
As $\eta_\alpha=\eta$ and $\xi\in x_\alpha$, the set $S':=A_{\xi,\eta}\cap\alpha$ is a stationary subset of $S\cap\alpha$.
Now, for each $\delta\in S'$, we have $\delta \in A_{\xi,\eta}$, meaning that $h(\delta)=\sup(\otp(x\cap\xi))=\sup(\tau+1)=\tau$, as sought.
\end{proof}

Next, we address the case that $\lambda$ is regular.

\begin{theorem}\label{thm38}
Suppose that $\mu<\theta<\lambda$ are infinite regular cardinals, and $T\s E^{\lambda^+}_\theta$ is stationary. Then $\prt(E^{\lambda^+}_\mu,\theta,T)$ holds.
\end{theorem}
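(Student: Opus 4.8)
The plan is to produce, in the manner of the proofs of Theorems~\ref{thm35} and~\ref{thm37}, a single function $h\colon E^{\lambda^+}_\mu\to\theta$ such that, for every $\tau<\theta$, the fiber $S_\tau:=h^{-1}\{\tau\}$ reflects at all but nonstationarily many points of $E^{\lambda^+}_\theta$; that is, $E^{\lambda^+}_\theta\setminus\Tr(S_\tau)$ is nonstationary. This would suffice: since $\theta<\lambda^+$ and $\ns_{\lambda^+}$ is $\lambda^+$-complete, the union $\bigcup_{\tau<\theta}(E^{\lambda^+}_\theta\setminus\Tr(S_\tau))$ is nonstationary, so $T\cap\bigcap_{\tau<\theta}\Tr(S_\tau)$ contains $T$ minus a nonstationary set and is therefore stationary, while $\langle S_\tau\mid\tau<\theta\rangle$ is a partition of $E^{\lambda^+}_\mu$; thus $\prt(E^{\lambda^+}_\mu,\theta,T)$ holds. (If this ``almost everywhere'' form turns out to be out of reach, one can instead, exactly as in Theorem~\ref{thm37}, keep shrinking a stationary subset of $T$ throughout; the content below is the same.)

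Since $\theta<\lambda$, we have $\theta^+<\lambda^+$, so Shelah's club-guessing theorem supplies a sequence $\langle C_\alpha\mid\alpha\in E^{\lambda^+}_\theta\rangle$ with each $C_\alpha$ a club in $\alpha$ of order type $\theta$ such that, for every club $D\s\lambda^+$, the set $\{\alpha\in E^{\lambda^+}_\theta\mid C_\alpha\s D\}$ is stationary. For each such $\alpha$, writing $c_\alpha$ for the increasing continuous enumeration of $C_\alpha$, the set $\acc(C_\alpha)\cap E^{\lambda^+}_\mu$ is stationary in $\alpha$ and equals $\{c_\alpha(\delta)\mid\delta\in E^\theta_\mu\}$, and $c_\alpha$ transports stationary subsets of $E^\theta_\mu$ to stationary subsets of $E^\alpha_\mu$ and back. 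Fixing, as a convenient target, a partition $\langle Q_\tau\mid\tau<\theta\rangle$ of $E^\theta_\mu$ into sets stationary in $\theta$, the goal becomes a function $h$ for which, for all but nonstationarily many $\alpha$, the set $\{\delta\in E^\theta_\mu\mid h(c_\alpha(\delta))=\tau\}$ is stationary in $\theta$ for every $\tau<\theta$ --- which, via $c_\alpha$, forces $S_\tau\cap\alpha$ to be stationary in $\alpha$ for every $\tau<\theta$.

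The difficulty is that the ladders $\langle C_\alpha\rangle$ need not be coherent, so a point $\beta\in E^{\lambda^+}_\mu$ carries no canonical ``level'': the value $\otp(C_\alpha\cap\beta)$ depends on $\alpha$. To get around this I would follow the pattern of the proof of Theorem~\ref{thm37}. Fix an increasing continuous cofinal map $\pi_\beta\colon\mu\to\beta$ for each $\beta\in E^{\lambda^+}_\mu$; synthesize from the $C_\alpha$'s --- invoking Lemma~\ref{localtoglobal} to pass from ``inside $\alpha$'' increasingness to a global one --- a function $\varphi\colon\lambda^+\to\theta$ which, for club-many $\alpha\in E^{\lambda^+}_\theta$, is strictly increasing and cofinal in $\theta$ along some stationary subset of $E^\alpha_\mu$; and set $A_{\xi,\eta}:=\{\beta\in E^{\lambda^+}_\mu\mid\varphi(\pi_\beta(\eta))=\xi\}$ for $\xi<\theta$ and $\eta<\mu$. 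The heart of the matter is then the analogue of Claim~\ref{claim353}: for club-many $\alpha$ there is $\eta<\mu$ such that $A_{\xi,\eta}\cap\alpha$ is stationary in $\alpha$ for a stationary-in-$\theta$ set of colors $\xi$ --- proved by a Fodor-style pressing-down inside $\alpha$, which is available since $\mu<\theta=\cf(\alpha)$. After pigeonholing a single value $\eta$ (legitimate since $\mu<\lambda^+=\cf(\lambda^+)$), the function $h$ is read off from $\beta\mapsto\varphi(\pi_\beta(\eta))$ composed with a fixed order isomorphism, so that all $\theta$ colors genuinely occur stationarily below almost every $\alpha$.

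The main obstacle --- and the point where the regularity of $\lambda$ is really used --- is the construction of the global $\varphi$. Unlike in Theorem~\ref{thm37}, there is no scale to lean on, so $\varphi$ must be built directly from the club-guessing sequence while controlling its behavior along stationarily-many inner cofinality-$\mu$ sets for almost every $\alpha\in E^{\lambda^+}_\theta$, and without any hypothesis on cardinal arithmetic (note that $\mathcal C(\theta,\theta)$ can be arbitrarily large, so the device used in Theorem~\ref{thm37} with $\nu=\theta$ is not available here). I expect this step to require an Ulam matrix on the successor cardinal $\lambda^+$, in the spirit of Case~2.2 of the proof of Theorem~\ref{thm35}, to convert the incoherent family of local patterns into data capturable by one function. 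Once $\varphi$ is in hand, the remainder is a routine rerun of the bookkeeping in the proof of Theorem~\ref{thm37}, producing the desired $h$ and hence $\prt(E^{\lambda^+}_\mu,\theta,T)$.
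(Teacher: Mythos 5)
Your reduction to finding a single $h\colon E^{\lambda^+}_\mu\to\theta$ whose fibers all reflect at every point of some stationary subset of $T$ is the right target, and you have correctly isolated the central difficulty: without coherence, a point $\beta\in E^{\lambda^+}_\mu$ has no canonical level, so the obvious colorings are not well defined. But the proposal does not overcome this difficulty; it only names it and defers it. The step ``synthesize from the $C_\alpha$'s, invoking Lemma~\ref{localtoglobal}, a function $\varphi\colon\lambda^+\to\theta$ which for club-many $\alpha$ is increasing and cofinal along a stationary subset of $E^\alpha_\mu$'' is circular: Lemma~\ref{localtoglobal} takes as \emph{input} a globally defined $f\colon\alpha\rightarrow\ord$ that is already strictly increasing into a common limit $\zeta$ on a stationary set, and merely recalibrates its values; in Theorems~\ref{thm35} and~\ref{thm37} that input is supplied by the scale (namely $f(\gamma)=f_\gamma(i^*)$), and here there is nothing to feed it. Likewise, an Ulam matrix over $\lambda^+$ is a $\lambda^+\times\lambda$ array of subsets of $\lambda^+$ and offers no mechanism for reconciling the incoherent values $\otp(C_\alpha\cap\beta)$ across different $\alpha$'s; in the paper, Ulam matrices are used only \emph{after} a global $\varphi$ is in hand, to spread its values over $\theta$ colors. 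So the heart of the proof is missing.

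The ingredient you need is Shelah's theorem that, for $\lambda$ regular, $E^{\lambda^+}_{<\lambda}$ is the union of $\lambda$ many sets each carrying a \emph{partial square} (\cite[Lemma~4.4]{Sh:351}): one gets sequences $\langle C^j_\alpha\mid\alpha\in\Gamma_j\rangle$ with full coherence $C^j_{\bar\alpha}=C^j_\alpha\cap\bar\alpha$ for $\bar\alpha\in\acc(C^j_\alpha)$. Coherence is exactly what restores the canonical level: after picking $j$ with $T\cap\Gamma_j$ stationary, stabilizing $\otp(C^j_\alpha)=\varepsilon$ on a stationary $T_0$, and thinning each club to order type $\theta$ coherently (via \cite[Lemma~3.1]{paper29}), the map $\delta\mapsto\otp(C_\delta)$ becomes a well-defined global function whose values enumerate a club in $\theta$ as $\delta$ ranges over $\acc(C_\alpha)$, for each $\alpha\in T_0$. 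Composing with any $g\colon\theta\rightarrow\theta$ all of whose fibers meet $E^\theta_\mu$ stationarily then yields $h$ directly --- no club guessing, no Ulam matrix, and no pressing-down inside $\alpha$ are needed. Without this (or some equally strong coherence input), your outline cannot be completed as written.
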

\begin{proof}
By \cite[Lemma~4.4]{Sh:351}, $E^{\lambda^+}_{<\lambda}$ is the union of $\lambda$ many sets, each of which carries a partial square.
That is, there exists a sequence $\langle \Gamma_j\mid i<\lambda\rangle$ such that:
\begin{itemize}
\item $\bigcup_{i<\lambda}\Gamma_j=E^{\lambda^+}_{<\lambda}$;
\item for each $j<\lambda$, there is a sequence $\langle C^j_\alpha\mid\alpha\in\Gamma_j\rangle$ such that, for every limit ordinal $\alpha\in\Gamma_j$, $C^j_\alpha$ is a club in $\alpha$ of order-type $<\lambda$,
and for each $\bar\alpha\in\acc(C^j_\alpha)$, we have $\bar\alpha\in\Gamma_j$ and $C^j_{\bar\alpha}=C^j_\alpha\cap\bar\alpha$.
\end{itemize}

Fix $j<\lambda$ such that $T\cap E^{\lambda^+}_\theta\cap\Gamma_j$ is stationary.
Then find $\varepsilon<\lambda$ and a stationary $T_0\s T\cap E^{\lambda^+}_\theta\cap\Gamma_j$ such that, for all $\alpha\in T_0$, $\otp(C_\alpha^j)=\varepsilon$.
By \cite[Lemma~3.1]{paper29}, we may fix a function $\Phi:\mathcal P(\lambda^+)\rightarrow\mathcal P(\lambda^+)$ satisfying that for every $\alpha\in\acc(\lambda^+)$ and every club $x$ in $\alpha$:
\begin{itemize}
\item $\Phi(x)$ is a club in $\alpha$;
\item $\acc(\Phi(x))\s\acc(x)$;
\item if $\bar\alpha\in\acc(\Phi(x))$, then $\Phi(x)\cap\bar\alpha=\Phi(x\cap\bar\alpha)$;
\item if $\otp(x)=\varepsilon$, then $\otp(\Phi(x))=\theta$.
\end{itemize}
For each $\alpha\in \Gamma_j$, let $C_\alpha:=\Phi(C^j_\alpha)$.
Fix $g:\theta\rightarrow\theta$ such that, for all $\tau<\theta$, $E^\theta_\mu\cap g^{-1}\{\tau\}$ is stationary in $\theta$.
Define $h:\lambda^+\rightarrow\theta$ as follows:
$$h(\delta):=\begin{cases}
    g(\otp(C_\delta)),&\text{if }\delta\in\Gamma_j\ \&\ \otp(C_\delta)<\theta;\\
0,&\text{otherwise}.
\end{cases}$$

Now, let $\alpha\in T_0$ and $\tau<\theta$ be arbitrary. As $C_\alpha\cap\delta=C_\delta$ for all $\delta\in\acc(C_\alpha)$, we get that $\langle \otp(C_\delta)\mid \alpha\in\acc(C_\alpha)\rangle$ is a club in $\theta$,
and hence $\{\delta\in E^{\lambda^+}_\mu\cap \acc(C_\alpha)\mid g(\otp(C_\delta))=\tau\}$ is stationary in $\alpha$.
\end{proof}

\begin{remark}\label{remark39}
The proof of the preceding makes clear that if $\mu<\lambda$ are infinite regular cardinals, $\square_\lambda$ holds, and $T\s E^{\lambda^+}_\lambda$  is stationary, then $\prt(E^{\lambda^+}_\mu,\lambda,T)$ holds.
\end{remark}

We are now ready to derive Theorem~A.

\begin{proof}[Proof of Theorem~A]
\begin{enumerate}
\item Suppose that $\lambda$ is inaccessible, so that $\lambda=\aleph_\lambda$.
Trivially, $\langle E^{\lambda^+}_\mu\mid \mu\in\reg(\lambda)\rangle$ witnesses $\prt(E^{\lambda^+}_{<\lambda},\lambda,E^{\lambda^+}_\lambda)$.
Likewise, for cofinally many $\theta<\lambda$ (e.g., $\theta$ singular with $\theta=\aleph_\theta$), $\langle E^{\lambda}_\mu\mid \mu\in\reg(\theta)\rangle$ witnesses $\prt(E^\lambda_{<\theta},\theta,\lambda)$.

\item By Theorem~\ref{thm38}.
\item By Fact~\ref{scales}(\ref{c1}), we may let $\vec f=\langle f_\beta\mid\beta<\lambda^+\rangle$ be some scale for $\lambda$.
Let $\nu:=\theta$ so that $\nu$ is a regular cardinal $\neq\cf(\lambda)$. Let $S:=E^{\lambda^+}_\mu$, and $T:=E^{\lambda^+}_\nu$, so that $\Tr(S)\supseteq T$.
By Fact~\ref{scales}(\ref{c2}), $G(\vec f)\cap E^{\lambda^+}_{\nu}$ is stationary.
So, by Theorem~\ref{thm35}, $\prt(S,\theta,T)$ holds.

\item Let $\vec f$ be some scale for $\lambda$.
Let $\nu:=\min(\{\theta^{+2},\theta^{+3}\}\setminus\{\cf(\lambda)\})$, so that $\nu$ is a successor cardinal $\neq\cf(\lambda)$.
By Lemma~\ref{lemmaC} and as $\theta<\cf(\theta)^+<\nu$, we have $\mathcal C(\nu,\theta)=\nu<\lambda$.
Let $S:=E^{\lambda^+}_\mu$ and $T:=E^{\lambda^+}_\nu$, so that $\Tr(S)\supseteq T$.
Then $G(\vec f)\cap E^{\lambda^+}_{\nu}$ is stationary,
and so, by Theorem~\ref{thm35}, $\prt(S,\theta,T)$ holds.

\end{enumerate}
\end{proof}

We conclude this section by establishing a corollary that was promised at the end of the previous section.
\begin{cor}\label{cor34} For every singular cardinal  $\lambda$ and every $\theta<\lambda$,
 there exists a partition $\langle S_i\mid i<\theta\rangle$ of $\lambda^+$ such that $\sup\{\nu<\lambda\mid E^{\lambda^+}_\nu\cap\bigcap_{i<\theta}\Tr(S_i)\text{ is stationary}\}=\lambda$.
\end{cor}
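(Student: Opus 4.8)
The plan is to deduce Corollary~\ref{cor34} from Theorem~\ref{thm35} by a gluing argument: apply that theorem once for each of cofinally many regular cardinals $\nu<\lambda$, each time partitioning a \emph{different} (hence disjoint) ground set $E^{\lambda^+}_\mu$, and then amalgamate the resulting $\theta$-partitions piece by piece.

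To set things up, use Fact~\ref{scales}(\ref{c1}) to fix a scale $\vec f=\langle f_\beta\mid\beta<\lambda^+\rangle$ for $\lambda$, with associated sequence $\vec\lambda=\langle\lambda_i\mid i<\cf(\lambda)\rangle$, and put $\mu_j:=\lambda_j$ for $j<\cf(\lambda)$; thus $\langle\mu_j\mid j<\cf(\lambda)\rangle$ is a strictly increasing (in particular, injective) sequence of regular cardinals converging to $\lambda$. For each $j<\cf(\lambda)$, let $\rho_j:=\max\{\mu_j,\theta^{+3},\cf(\lambda)\}$ and $\nu_j:=\rho_j^+$. Since $\lambda$ is a singular, hence limit, cardinal $>\theta$, we get $\rho_j<\lambda$ and hence $\nu_j<\lambda$; moreover $\nu_j$ is an infinite successor cardinal with $\mu_j<\nu_j$, $\nu_j\neq\cf(\lambda)$, $\theta<\nu_j$, and $\theta^{++}<\nu_j$, so Lemma~\ref{lemmaC} gives $\mathcal C(\nu_j,\theta)=\nu_j\le\lambda$. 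Finally, $\sup_{j<\cf(\lambda)}\nu_j=\lambda$, since $\nu_j>\mu_j$ and $\sup_{j<\cf(\lambda)}\mu_j=\lambda$.

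Now fix $j<\cf(\lambda)$ and apply Theorem~\ref{thm35}, via its Clause~(i), with $\nu:=\nu_j$, $S:=E^{\lambda^+}_{\mu_j}$ and $T:=E^{\lambda^+}_{\nu_j}$. The hypothesis holds: as $\mu_j<\nu_j$, we have $E^{\lambda^+}_{\nu_j}\s\Tr(E^{\lambda^+}_{\mu_j})$, so $G(\vec f)\cap\Tr(E^{\lambda^+}_{\mu_j})\cap E^{\lambda^+}_{\nu_j}\cap T=G(\vec f)\cap E^{\lambda^+}_{\nu_j}$, which is stationary by Fact~\ref{scales}(\ref{c2}) (as $\nu_j\in\reg(\lambda)\setminus\{\cf(\lambda)\}$). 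Hence $\prt(E^{\lambda^+}_{\mu_j},\theta,E^{\lambda^+}_{\nu_j})$ holds; fix a partition $\langle S^j_\tau\mid\tau<\theta\rangle$ of $E^{\lambda^+}_{\mu_j}$ for which $T^j:=E^{\lambda^+}_{\nu_j}\cap\bigcap_{\tau<\theta}\Tr(S^j_\tau)$ is stationary. Having done this for every $j$, set $S_\tau:=\bigcup_{j<\cf(\lambda)}S^j_\tau$ for $0<\tau<\theta$ and $S_0:=\lambda^+\setminus\bigcup_{0<\tau<\theta}S_\tau$. As the sets $E^{\lambda^+}_{\mu_j}$ are pairwise disjoint, $\langle S_\tau\mid\tau<\theta\rangle$ is a partition of $\lambda^+$ with $S^j_\tau\s S_\tau$ for all $j<\cf(\lambda)$ and $\tau<\theta$. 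To conclude, fix $j_0<\cf(\lambda)$: for each $\alpha\in T^{j_0}$ we have $\cf(\alpha)=\nu_{j_0}>\omega$, and for each $\tau<\theta$ the set $S^{j_0}_\tau\cap\alpha$ is stationary in $\alpha$, hence so is the larger set $S_\tau\cap\alpha$; that is, $\alpha\in\Tr(S_\tau)$. Thus $T^{j_0}\s\bigcap_{\tau<\theta}\Tr(S_\tau)$, so $E^{\lambda^+}_{\nu_{j_0}}\cap\bigcap_{\tau<\theta}\Tr(S_\tau)$ is stationary; since $\sup_{j_0<\cf(\lambda)}\nu_{j_0}=\lambda$, this is exactly the assertion of the corollary.

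I do not expect a real obstacle, as all the combinatorics lives in Theorem~\ref{thm35} and what remains is bookkeeping. Two points require some care: (a) the ground sets $E^{\lambda^+}_{\mu_j}$ must be pairwise disjoint, so that the union of the individual $\theta$-partitions is again a genuine partition of $\lambda^+$ --- this is why the $\mu_j$ are chosen distinct; and (b) each target cofinality $\nu_j$ must be chosen large enough, namely above $\theta^{+3}$ and different from $\cf(\lambda)$, so that Lemma~\ref{lemmaC} yields $\mathcal C(\nu_j,\theta)\le\lambda$ and Clause~(i) of Theorem~\ref{thm35} applies; this forces $\nu_j$ to be a successor cardinal, which is harmless since successor cardinals are cofinal in the singular cardinal $\lambda$.
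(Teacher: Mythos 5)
Your proposal is correct and is essentially the paper's own argument: for cofinally many regular $\mu<\lambda$ one applies Theorem~\ref{thm35}(i) to get a witness to $\prt(E^{\lambda^+}_\mu,\theta,E^{\lambda^+}_{\nu(\mu)})$ for a suitable successor cardinal $\nu(\mu)>\mu$ (the paper takes $\mu>\max\{\theta,\cf(\lambda)\}$ and $\nu(\mu)=\mu^{++}$, you take $\nu(\mu)=(\max\{\mu,\theta^{+3},\cf(\lambda)\})^+$), and then glues the partitions along the pairwise disjoint ground sets $E^{\lambda^+}_\mu$, dumping the leftovers into the $0$-th piece. The only difference is that you spell out the verification of the hypotheses of Theorem~\ref{thm35} and Lemma~\ref{lemmaC}, which the paper leaves implicit.
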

\begin{proof} Let $A$ be a cofinal subset of $\lambda$ such that each $\mu\in A$ is a  cardinal satisfying $\mu>\max\{\theta,\cf(\lambda)\}$. For each $\mu\in A$, fix a sequence $\langle S_i^\mu\mid i<\theta\rangle$ witnessing  $\prt(E^{\lambda^+}_\mu,\theta,E^{\lambda^+}_{\mu^{++}})$. Then $\langle \lambda^+\setminus\bigcup_{i=1}^\theta\bigcup_{\mu\in A}S^\mu_i\rangle^{\smallfrown}\langle \bigcup_{\mu\in A}S^\mu_i\mid 1\le i<\theta\rangle$ is
 a partition of $\lambda^+$ as sought.
\end{proof}

\section{Theorem~B}
We now introduce a weak consequence of the principle $\snr(\kappa,\nu)$ from \cite{CDSh:571}:

\begin{definition} $\snr^-(\kappa,\nu,T)$ asserts that for every stationary $T_0\s T\cap E^\kappa_\nu$, there exists a function $\varphi:\kappa\rightarrow\nu$ such that, for stationarily many $\alpha\in T_0$,
for some club $c$ in $\alpha$, $\varphi\restriction c$ is strictly increasing.
\end{definition}

The relationship between $\snr^-(\ldots)$ and $\prt(\ldots)$ includes the following.

\begin{theorem}\label{thm42} Suppose:
\begin{itemize}
\item $\nu<\kappa$ are regular uncountable cardinals;
\item $S$ is subset of $\kappa$;
\item  $T\s\Tr(S)\cap E^\kappa_\nu$ is stationary;
\item $\snr^-(\kappa,\nu,  T)$ holds;
\item $\theta\le\nu$ is a cardinal satisfying $\mathcal C(\nu,\theta)<\kappa$.
\end{itemize}
If any of the following holds true:
\begin{enumerate}
\item $\nu$ is a successor cardinal;
\item $S\s E^\kappa_\mu$ for some regular uncountable $\mu<\kappa$;
\end{enumerate}
then $\prt(S,\theta,T)$ holds.
\end{theorem}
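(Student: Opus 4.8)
The plan is to recycle the arguments from the proofs of Theorems~\ref{thm35} and~\ref{thm37}, substituting for the function that was extracted there from a scale the function $\varphi$ that $\snr^-(\kappa,\nu,T)$ hands us. First I would apply $\snr^-(\kappa,\nu,T)$ with $T_0:=T$ (legitimate since $T=T\cap E^\kappa_\nu$) to obtain $\varphi:\kappa\to\nu$ and a stationary $T'\s T$ such that, for every $\alpha\in T'$, there is a club $c_\alpha$ in $\alpha$ with $\varphi\restriction c_\alpha$ strictly increasing. Since $\alpha\in\Tr(S)$, the set $s_\alpha:=c_\alpha\cap S$ is stationary in $\alpha$, and since $\varphi$ is order-preserving on the club $c_\alpha$ (whose order-type is $\nu$), $\varphi\restriction s_\alpha$ is strictly increasing with $\sup(\varphi[s_\alpha])=\nu$. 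In both clauses I would then exhibit, as in Theorem~\ref{thm35}, a function $h:S\to\theta$ and a stationary $T''\s T$ with $T''\s\Tr(h^{-1}\{\tau\})$ for every $\tau<\theta$; this witnesses $\prt(S,\theta,T)$.

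Assume Clause~(1), say $\nu=\chi^+$. The configuration above is precisely what Case~2.2 of the proof of Theorem~\ref{thm35} relies on, so I would run that argument verbatim: fix an Ulam matrix $\langle A_{\xi,\eta}\mid\xi<\nu,\,\eta<\chi\rangle$ over $\nu$; for each $\alpha\in T'$, Claim~\ref{claim343} (whose proof only invokes the existence of a stationary $s_\alpha\s S\cap\alpha$ on which $\varphi$ is strictly increasing and converges to $\nu$) yields $\eta_\alpha<\chi$ and $x_\alpha\in[\nu]^\nu$ with $\varphi^{-1}[A_{\xi,\eta_\alpha}]\cap\alpha$ stationary in $\alpha$ for all $\xi\in x_\alpha$; then, using $\mathcal C(\nu,\theta)<\kappa$ together with the pigeonhole principle at the regular cardinal $\kappa$, I stabilize $\eta_\alpha=\eta$ and thin $\acc^+(x_\alpha)$ to a fixed $x$ of order-type $\theta$ over a stationary $T''\s T'$; finally, set $h(\delta):=\sup(\otp(x\cap\xi))$ when $\varphi(\delta)\in A_{\xi,\eta}$ (and $h(\delta):=0$ otherwise), and check $T''\s\Tr(h^{-1}\{\tau\})$ as there. (The disjointness $A_{\xi,\eta}\cap A_{\xi',\eta}=\emptyset$ makes $h$ well defined.)

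Assume instead Clause~(2), say $S\s E^\kappa_\mu$ with $\mu$ regular uncountable; observe that the stationarity of $T\s\Tr(S)\cap E^\kappa_\nu$ forces $\mu<\nu$. Now I would follow the proof of Theorem~\ref{thm37} from the point where $\psi_\zeta\circ f$ is available---its replacement $\varphi$ is already in hand, so the first claim of that proof (producing $S_\alpha\s E^\alpha_\chi$ from goodness) and the auxiliary parameter $\chi$ drop out entirely. Concretely: for each $\beta\in S$ fix a strictly increasing continuous $\pi_\beta:\mu\to\beta$ with club image; put $A_{\xi,\eta}:=\{\beta\in S\mid\varphi(\pi_\beta(\eta))=\xi\}$ for $\xi<\nu$, $\eta<\mu$; establish the analogue of Claim~\ref{claim353}; and then, via $\mathcal C(\nu,\theta)<\kappa$, stabilize to a fixed $\eta$ and a fixed $x\s\acc^+(x_\alpha)$ of order-type $\theta$ over a stationary $T''$, and set $h(\delta):=\sup(\otp(x\cap\xi))$ when $\delta\in A_{\xi,\eta}$, just as in Theorem~\ref{thm37}.

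I expect that analogue of Claim~\ref{claim353} to be the main obstacle. Mimicking the original: assuming it fails, for each $\eta<\mu$ the set $x_\eta$ of $\xi<\nu$ for which $A_{\xi,\eta}\cap\alpha$ is stationary in $\alpha$ has size $<\nu$, so $\xi^*:=\sup\bigcup_{\eta<\mu}x_\eta<\nu$ by $\mu<\nu$ and regularity of $\nu$; I pick $\gamma\in c_\alpha$ with $\varphi(\gamma)>\xi^*$, let $\pi_\alpha:\nu\to\alpha$ be the increasing enumeration of $c_\alpha$ (continuous, being the enumeration of a club), put $g:=\varphi\circ\pi_\alpha$ (strictly increasing, from $\nu$ to $\nu$) and $D:=\{\bar\beta<\nu\mid g[\bar\beta]\s\bar\beta\}$, and locate a stationary $B\s S\cap\alpha$ whose elements $\beta$ accumulate both $c_\alpha\setminus(\gamma+1)$ and $\pi_\alpha[D]$ (hence lie in $\pi_\alpha[D]$); for such $\beta$, intersecting the club $\im(\pi_\beta)$ of $\beta$ with these clubs of $\beta$ produces $\eta_\beta<\mu$ with $\xi^*<\varphi(\pi_\beta(\eta_\beta))<\pi_\alpha^{-1}(\beta)$; pressing down on the stationary set $\pi_\alpha^{-1}[B]\s\nu$, first to freeze $\eta_\beta=\eta^*$ (using $\mu<\cf(\alpha)=\nu$) and then to freeze $\varphi(\pi_\beta(\eta^*))=\xi^{**}$, yields a stationary subset of $A_{\xi^{**},\eta^*}\cap\alpha$ with $\xi^{**}>\xi^*\ge\sup x_{\eta^*}$---contradicting $\xi^{**}\in x_{\eta^*}$. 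The delicate points are the continuity of $\pi_\alpha$ (needed so that $\pi_\alpha[D]$ is a club in $\alpha$ and $\pi_\alpha^{-1}[B]$ is stationary in $\nu$) and the bookkeeping keeping $\pi_\beta(\eta_\beta)$ simultaneously above $\gamma$ along $c_\alpha$ and below $\beta$ within $\pi_\alpha[D]$.
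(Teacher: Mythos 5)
Your proposal is correct and follows essentially the same route as the paper: the paper's proof of clause (1) is exactly the Ulam-matrix argument of Case 2.2 of Theorem~\ref{thm35} run on the $\snr^-$-provided $\varphi$ (via Claim~\ref{claim343}), and its proof of clause (2) is the argument of Theorem~\ref{thm37} from the point where $\psi_\zeta\circ f$ becomes available, again with $\varphi$ substituted (via the analogue of Claim~\ref{claim353}). Your reconstruction of that analogue, including the observation that $\mu<\nu$ is forced and the pressing-down bookkeeping, matches the paper's intended verification.
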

\begin{proof}
Fix a function $\varphi:\kappa\rightarrow\nu$, a stationary $T_0\s T$, and a sequence $\vec c=\langle c_\alpha\mid\alpha\in T_0\rangle$ such that, for all $\alpha\in T_0$,
$c_\alpha$ is a club in $\alpha$ (of order-type $\nu$) on which $\varphi$ is strictly increasing.

(1) The proof is similar to that of Theorem~\ref{thm35}, so we only give a sketch.
Suppose that $\nu=\chi^+$ is a successor cardinal.
Let $\langle A_{\xi,\eta}\mid \xi<\nu, \eta<\chi\rangle$ be an Ulam matrix over $\nu$.
For every $\alpha\in T_0$,  $S\cap c_\alpha$ is a stationary subset of $S\cap\alpha$ on which $\varphi$ is injective.
Consequently, and as made by clear by the proof of Claim~\ref{claim343}, there are  $\eta_\alpha<\chi$ and $x_\alpha\in[\nu]^\nu$  such that, for all $\xi\in x_\alpha$, $\varphi^{-1}[A_{\xi,\eta_\alpha}]\cap S\cap \alpha$ is stationary in $\alpha$.
As $\mathcal C(\nu,\theta)<\kappa$,
fix some stationary $T_1\s T_0$ along with $\eta<\chi$ and  $x\s\nu$ of order-type $\theta$
such that $\eta_\alpha=\eta$ and $x\s \acc^+(x_\alpha)$ for all $\alpha\in T_1$.
Let $h:S\rightarrow\theta$ be any function satisfying $h(\delta):=\sup(\otp(x\cap\xi))$ whenever $\varphi(\delta)\in A_{\xi,\eta}$.
Then $T_1\s\Tr(h^{-1}\{\tau\})$ for all $\tau<\theta$.

(2) The proof is similar to that of Theorem~\ref{thm37}.
 For every $\beta\in S$, fix a strictly increasing function $\pi_\beta:\mu\rightarrow\beta$ whose image is a club in $\beta$.
For all $\xi<\nu$ and $\eta<\mu$,  let $A_{\xi,\nu}:=\{\beta\in S\mid \varphi(\pi_\beta(\eta))=\xi\}$.
As made clear by the proof of Claim~\ref{claim353}, for every $\alpha\in T_0$,
there exist $\eta_\alpha<\mu$ and $x_\alpha\in[\nu]^\nu$ such that, for all $\xi\in x_\alpha$, $A_{\xi,\eta_\alpha}\cap\alpha$ is stationary in $\alpha$.\footnote{Note that if $\vec c$ is \emph{coherent}
in the sense that $|\{ c_\alpha\cap\beta\mid \alpha\in T_0, \beta\in\acc(c_\alpha)\}|\le 1$ for all $\beta<\kappa$,
then we can also handle the case $\mu=\aleph_0$. This complements the result mentioned in Remark~\ref{remark39}.}
As $\mathcal C(\nu,\theta)<\kappa$,
fix some stationary $T_1\s T_0$ along with $\eta<\mu$ and  $x\s\nu$ of order-type $\theta$
such that $\eta_\alpha=\eta$ and $x\s \acc^+(x_\alpha)$ for all $\alpha\in T_1$.
Let $h:S\rightarrow\theta$ be any function satisfying $h(\delta):=\sup(\otp(x\cap\xi))$ whenever $\delta\in A_{\xi,\eta}$.
The verification that $h$ witnesses $\prt(S,\theta,T)$ is by now routine.\qedhere
\end{proof}

\begin{theorem}\label{thm43} Suppose:
\begin{itemize}
\item $\nu<\kappa$ are infinite regular cardinals;
\item $S$ is subset of $\kappa$;
\item  $T\s\Tr(S)\cap E^\kappa_\nu$ is stationary;
\item $\snr^-(\kappa,\nu,  T)$ holds;
\item $2^\nu<\kappa$.
\end{itemize}

Then $\prp(S,\nu,T)$ holds.
\end{theorem}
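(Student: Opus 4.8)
The plan is to fuse the function supplied by $\snr^-(\kappa,\nu,T)$ with a pointwise application of Lemma~\ref{lemma13}, using the cardinal-arithmetic hypothesis $2^\nu<\kappa$ to make the pointwise choices uniform. So fix $\theta\le\nu$, a sequence $\langle S_i\mid i<\theta\rangle$ of subsets of $S$, and a stationary $T'\s T\cap\bigcap_{i<\theta}\Tr(S_i)$. By the definition of $\Tr$, every element of $T$ has uncountable cofinality; as $T\s E^\kappa_\nu$, this gives $\nu>\aleph_0$, so $\nu$ is regular uncountable. Since $T'$ is a stationary subset of $T\cap E^\kappa_\nu=T$, the principle $\snr^-(\kappa,\nu,T)$ yields a function $\varphi:\kappa\to\nu$ for which $\hat T:=\{\alpha\in T'\mid \varphi\restriction c\text{ is strictly increasing for some club }c\text{ in }\alpha\}$ is stationary. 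For each $\alpha\in\hat T$ fix a witnessing club $c_\alpha$; as $\varphi\restriction c_\alpha$ is strictly increasing into $\nu$, necessarily $\otp(c_\alpha)=\nu$ and $\varphi$ is injective on $c_\alpha$. Fix the (continuous, increasing) enumeration $\pi_\alpha:\nu\to\alpha$ of $c_\alpha$.

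Next I localize the partition problem to a single $\alpha\in\hat T$. For each $i<\theta$, since $\alpha\in\Tr(S_i)$ the set $S_i\cap c_\alpha$ is stationary in $\alpha$, and hence $\bar S^\alpha_i:=\pi_\alpha^{-1}[S_i\cap c_\alpha]$ is a stationary subset of $\nu$. As $\theta\le\nu$ and $\nu$ is regular uncountable, Lemma~\ref{lemma13} applied to $\langle \bar S^\alpha_i\mid i<\theta\rangle$ provides a cofinal $I_\alpha\s\theta$ and a sequence $\langle \bar R^\alpha_i\mid i\in I_\alpha\rangle$ of pairwise disjoint stationary sets with $\bar R^\alpha_i\s\bar S^\alpha_i$. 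Setting $R^\alpha_i:=\pi_\alpha[\bar R^\alpha_i]$, the sets $\langle R^\alpha_i\mid i\in I_\alpha\rangle$ are pairwise disjoint and stationary in $\alpha$, with $R^\alpha_i\s S_i\cap c_\alpha$; since $\varphi$ is injective on $c_\alpha$, the sets $\langle\varphi[R^\alpha_i]\mid i\in I_\alpha\rangle$ are likewise pairwise disjoint. Define $g_\alpha:\nu\to\theta$ by letting $g_\alpha(\xi):=i$ if $\xi\in\varphi[R^\alpha_i]$ for the (unique) such $i\in I_\alpha$, and $g_\alpha(\xi):=\min(I_\alpha)$ otherwise.

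Finally I globalize. Since $|{}^\nu\theta|\le 2^\nu$ and $|\mathcal P(\theta)|\le 2^\nu<\kappa=\cf(\kappa)$, there is a stationary $T^*\s\hat T$ on which $\alpha\mapsto(g_\alpha,I_\alpha)$ is constant, say with value $(g,I)$. For $i\in I$, put $S'_i:=\{\delta\in S_i\mid g(\varphi(\delta))=i\}$; then $S'_i\s S_i$ and $I$ is cofinal in $\theta$. The $S'_i$ are pairwise disjoint, for if $\delta\in S'_i\cap S'_j$ then $i=g(\varphi(\delta))=j$. Moreover, given $\alpha\in T^*$ and $i\in I$, for every $\delta\in R^\alpha_i$ we have $\varphi(\delta)\in\varphi[R^\alpha_i]$, so by disjointness of the $\varphi[R^\alpha_j]$'s, $g(\varphi(\delta))=g_\alpha(\varphi(\delta))=i$; thus $R^\alpha_i\s S'_i\cap\alpha$, so $S'_i\cap\alpha$ is stationary in $\alpha$, whence $\alpha\in\Tr(S'_i)$ (recall $\cf(\alpha)=\nu>\aleph_0$). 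Therefore $T^*\s T'\cap\bigcap_{i\in I}\Tr(S'_i)$, and since $T^*$ is stationary, $\langle S'_i\mid i\in I\rangle$ witnesses the required instance of $\prp(S,\nu,T)$.

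I do not anticipate a genuine obstacle: the content is exactly the interplay between $\snr^-$ (a global function that is locally increasing on clubs), the Brodsky--Rinot disjointification lemma (applied inside each reflection point after transporting to $\nu$ via $\pi_\alpha$), and the counting hypothesis $2^\nu<\kappa$ (to stabilize $\alpha\mapsto(g_\alpha,I_\alpha)$). The only steps requiring care are checking that $g_\alpha$ is well defined --- which is precisely the pairwise disjointness of $\langle\varphi[R^\alpha_i]\mid i\in I_\alpha\rangle$, resting on the injectivity of $\varphi$ along $c_\alpha$ --- and the routine verification that the order isomorphism $\pi_\alpha$ transports stationary subsets of $\nu$ to stationary subsets of $\alpha$ and back, along its club image $c_\alpha$.
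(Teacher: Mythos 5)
Your proposal is correct and follows essentially the same route as the paper's proof: apply $\snr^-$ to $T'$ to get a globally defined $\varphi$ that is injective on a club through each reflection point, transport the traces $S_i\cap\alpha$ to $\nu$ via the enumeration of that club, disjointify there with Lemma~\ref{lemma13}, pull back, encode the result as a function $\nu\to\theta$, and stabilize over a stationary set using $2^\nu<\kappa$. The only cosmetic difference is that you stabilize the pair $(g_\alpha,I_\alpha)$ where the paper stabilizes a single function $h_\alpha$ and recovers $I$ as its image.
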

\begin{proof}
Suppose $\theta\le\nu$, $\vec S=\langle S_i\mid i<\theta\rangle$ is a sequence of stationary subsets of $S$,
and $T_0\s T\cap\bigcap_{i<\theta}\Tr(S_i)$ is stationary.
By $\snr^-(\kappa,\nu,T)$, fix a function $\varphi:\kappa\rightarrow\nu$, a stationary $T_1\s T_0$ and a sequence $\langle c_\alpha\mid \alpha\in T_1\rangle$ such that, for every $\alpha\in T_1$, $c_\alpha$ is a club in $\alpha$ (of order-type $\nu$) on which $\varphi$ is injective.

\begin{claim}
Let $\alpha\in T_1$.  Then there exists a function $h:\nu\rightarrow\theta$ such that $\im(h)\in[\theta]^\theta$ and, for all $i\in\im(h)$, $\{\delta\in S_i\cap\alpha\mid h(\varphi(\delta))=i\}$ is stationary in $\alpha$.
\end{claim}
\begin{proof} Let $\pi:c_\alpha\rightarrow\nu$ denote the unique order-preserving bijection. As $\alpha\in T_1\s\bigcap_{i<\theta}\Tr(S_i)$,
we know that $\langle \pi[S_i]\mid i<\theta\rangle$ is a sequence of stationary subsets of $\nu$, so by Lemma~\ref{lemma13}, we fix a sequence $\langle S_i'\mid i\in I\rangle$ of pairwise disjoint sets such that:
\begin{itemize}
\item $I$ is a cofinal subset of $\theta$;
\item for each $i\in I$, $S_i'\s \pi[S_i]$ is stationary.
\end{itemize}
Now, as $\varphi\circ\pi^{-1}$ is injective, it easy to find $h:\nu\rightarrow I$ satisfying that, for all $i\in I$ and $\bar\delta\in S_i'$, $h(\varphi(\pi^{-1}(\bar\delta)))=i$. Clearly, any such $h$ is as sought.
\end{proof}
For each $\alpha\in T_1$, fix a function $h_\alpha$ as in the claim.
Then, as $2^\nu<\kappa$, we may find a stationary $T_2\s T_1$ and some $h:\nu\rightarrow\theta$ such that $h_\alpha=h$ for all $\alpha\in T_2$.
Let $I:=\im(h)$. For each $i\in I$, let $S_i':=\{\delta\in S_i\mid h(\varphi(\delta))=i\}$. Clearly, $\langle S'_i\mid i\in I\rangle$ is a sequence as sought.
\end{proof}

\begin{prop} Suppose that $\nu<\lambda=\lambda^\nu<\kappa\le 2^\lambda$ are infinite cardinals with $\nu,\kappa$ regular.
Then:
 \begin{enumerate}
 \item $\snr^-(\kappa,\nu,E^\kappa_\nu)$ holds;
\item  $\prp(\kappa,\nu,E^\kappa_\nu)$ holds.
 \end{enumerate}
\end{prop}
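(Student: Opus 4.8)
The plan is to deduce part~(2) from part~(1) by means of Theorem~\ref{thm43}, and to establish part~(1) directly; the only substantial work is part~(1) when $\nu$ is uncountable. For the reduction of~(2) to~(1): if $\nu$ is uncountable I would apply Theorem~\ref{thm43} with $S:=\kappa$ and $T:=E^\kappa_\nu$, using that $\nu<\kappa$ are infinite regular cardinals, that $\Tr(\kappa)=E^\kappa_{>\omega}\supseteq E^\kappa_\nu$ so that $T\subseteq\Tr(S)\cap E^\kappa_\nu$ is stationary, that $\snr^-(\kappa,\nu,T)$ is precisely the content of part~(1), and that $2^\nu\le\lambda^\nu=\lambda<\kappa$; Theorem~\ref{thm43} then yields $\prp(\kappa,\nu,E^\kappa_\nu)$. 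If instead $\nu=\aleph_0$, then $\prp(\kappa,\aleph_0,E^\kappa_{\aleph_0})$ holds vacuously: for every sequence $\langle S_i\mid i<\theta\rangle$ with $1\le\theta\le\aleph_0$ one has $\bigcap_{i<\theta}\Tr(S_i)\subseteq E^\kappa_{>\omega}$, so $E^\kappa_{\aleph_0}\cap\bigcap_{i<\theta}\Tr(S_i)=\emptyset$ admits no stationary subset, and for $\theta=0$ the requirement is trivially met by $I=\emptyset$.

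Part~(1) in the case $\nu=\aleph_0$ requires no arithmetic hypothesis. I would fix, via Solovay's theorem, a partition $\langle S_n\mid n<\omega\rangle$ of $\kappa$ into stationary sets and let $\varphi(\gamma)$ be the unique $n$ with $\gamma\in S_n$. For each $n$, $\bigcup_{m\ge n}S_m$ is stationary, hence unbounded, so the set $E$ of those $\alpha<\kappa$ at which $\bigcup_{m\ge n}S_m$ is unbounded for every $n$ is a club in $\kappa$; and for any $\alpha\in E\cap E^\kappa_{\aleph_0}$ a routine recursion produces a cofinal $\omega$-sequence $\beta_0<\beta_1<\cdots$ in $\alpha$ with $\varphi(\beta_0)<\varphi(\beta_1)<\cdots$, whose range is a club in $\alpha$ (being of order type $\omega$, it has no limit point below $\alpha$) on which $\varphi$ is strictly increasing. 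Thus $\varphi$ witnesses $\snr^-(\kappa,\aleph_0,E^\kappa_{\aleph_0})$ for every stationary $T_0$.

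For part~(1) with $\nu$ uncountable I would argue as follows. Fix a stationary $T_0\subseteq E^\kappa_\nu$. Since $\kappa\le 2^\lambda$, fix pairwise distinct $\langle x_\alpha\mid\alpha<\kappa\rangle$ in ${}^\lambda 2$; and since $\lambda^\nu=\lambda$, build a continuous $\in$-increasing chain $\langle M_\xi\mid\xi<\kappa\rangle$ of elementary submodels of $H_{\kappa^+}$ (equipped with a well-ordering, with $T_0$, and with $\langle x_\alpha\mid\alpha<\kappa\rangle$), each of size $\lambda$, each closed under $\nu$-sequences --- possible precisely because $\lambda^\nu=\lambda$ --- with $\lambda+1\subseteq M_0$ and $\langle M_\eta\mid\eta\le\xi\rangle\in M_{\xi+1}$. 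Writing $\delta_\xi:=\sup(M_\xi\cap\kappa)$, the set $C:=\{\delta_\xi\mid\xi<\kappa\text{ a limit ordinal}\}$ is a club in $\kappa$ and $T_0':=\{\delta_\xi\in T_0\mid\cf(\xi)=\nu\}$ is stationary. I would then define $\varphi:\kappa\to\nu$ block by block, along the partition of $\kappa$ into the intervals $[\delta_\xi,\delta_{\xi+1})$ --- each of size $\le\lambda$ and handled inside $M_{\xi+1}$, where the relevant initial portion of $\langle x_\alpha\mid\alpha<\kappa\rangle$ is available --- in such a way that each $\alpha=\delta_\xi\in T_0'$ admits a closed cofinal subset $c_\alpha\subseteq\alpha$ of order type $\nu$ on which $\varphi$ is strictly increasing; note that only $\cf(\alpha)=\nu$, not the membership $\alpha\in T_0$, is used in producing $c_\alpha$. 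Then $\varphi$, together with the stationary set $T_0'\subseteq T_0$, witnesses $\snr^-(\kappa,\nu,E^\kappa_\nu)$ for the given $T_0$; the construction of $\varphi$ follows the pattern of the non-reflection arguments in \cite{CDSh:571}.

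The step I expect to be the main obstacle is arranging that $c_\alpha$ be \emph{closed}: at a limit point $\beta$ of $c_\alpha$ the value $\varphi(\beta)$ must dominate the supremum --- now unbounded in $\nu$ --- of the earlier values of $\varphi$ along $c_\alpha$. This coherence cannot be obtained by taking $c_\alpha$ inside the club $C$ of block boundaries, since no $\square$-like principle is at hand; it is exactly here that the $\nu$-closure of the models $M_\xi$ enters, for any proper initial segment of $c_\alpha$ (of length $<\nu$) lies in some $M_\eta$, so the continuation of $c_\alpha$ --- and in particular the behaviour of $\varphi$ at the limit points of $c_\alpha$ --- can be read off reflectively from the tree structure of $\langle x_\alpha\mid\alpha<\kappa\rangle$ inside $M_\eta$, which is what lines the values of $\varphi$ up coherently across blocks. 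Making this bookkeeping precise is the technical heart of the matter; the remaining verifications are routine.
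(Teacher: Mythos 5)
Your reduction of part~(2) to part~(1) via Theorem~\ref{thm43}, using $2^\nu\le\lambda^\nu=\lambda<\kappa$, is exactly the paper's argument, and your separate (trivial) treatment of $\nu=\aleph_0$ is fine if unnecessary. The problem is part~(1) for uncountable $\nu$, which is the entire content of the proposition and which you have not actually proved. You set up a chain of $\nu$-closed elementary submodels and propose to define $\varphi$ block by block on the intervals $[\delta_\xi,\delta_{\xi+1})$, but you yourself isolate the fatal point --- arranging that each $c_\alpha$ be closed, i.e.\ that at a limit point $\beta$ of $c_\alpha$ the single global value $\varphi(\beta)$ dominate the (unbounded in $\nu$) earlier values of $\varphi$ along $c_\alpha$, simultaneously for every $\alpha$ whose prospective club passes through $\beta$ --- and then defer it as ``the technical heart of the matter.'' The appeal to reading the continuation of $c_\alpha$ off ``the tree structure of $\langle x_\alpha\mid\alpha<\kappa\rangle$ inside $M_\eta$'' is not an argument, and since distinct $\alpha$ impose conflicting demands on the same block, it is not clear your scheme can be completed without some additional coherence device. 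As written, part~(1) is a plan, not a proof.

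The paper proves (1) by a standard application of the Engelking--Kar{\l}owicz theorem, which sidesteps your difficulty by never building $\varphi$ recursively. Since $\lambda^\nu=\lambda$ and $\kappa\le2^\lambda$, Engelking--Kar{\l}owicz provides a family $\{h_j\mid j<\lambda\}$ of functions from $\kappa$ to $\nu$ such that every partial function from $\kappa$ to $\nu$ whose domain has size $\le\nu$ is extended by some $h_j$. Given a stationary $T_0\s E^\kappa_\nu$, choose for each $\alpha\in T_0$ a club $c_\alpha\s\alpha$ of order type $\nu$ and let $g_\alpha:c_\alpha\rightarrow\nu$ be its transitive collapse, which is strictly increasing; pick $j_\alpha<\lambda$ with $h_{j_\alpha}\restriction c_\alpha=g_\alpha$. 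As $\lambda<\kappa=\cf(\kappa)$, some fixed $j$ satisfies $j_\alpha=j$ for stationarily many $\alpha\in T_0$, and $\varphi:=h_j$ witnesses $\snr^-(\kappa,\nu,E^\kappa_\nu)$ for $T_0$. This is also, in essence, the argument behind the $\snr$ results of \cite{CDSh:571} whose ``pattern'' you invoke; I recommend replacing your block-by-block construction with it.
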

\begin{proof} (1) By a standard application of the Engelking-Kar{\l}owicz theorem.

(2) By Clause~(1) and Theorem~\ref{thm43}, noticing that $2^\nu\le\lambda<\kappa$.
\end{proof}

The next scenario arises naturally when one tries to relax the hypothesis ``$S\s E^{\lambda^+}_\mu$'' of Theorem~\ref{thm37} into ``$S\s \lambda^+$''.

\begin{prop} Suppose that $\nu<\kappa$ are regular uncountable cardinals, $T\s E^\kappa_\nu$,
and there exists a function $f:\kappa\rightarrow\nu$ such that $T\cap\bigcup_{i<\nu}\Tr(f^{-1}\{i\})$ is nonstationary.
Then $\snr^-(\kappa,\nu,T)$ holds.
\end{prop}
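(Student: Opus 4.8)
The plan is to show that the function $f$ furnished by the hypothesis already witnesses $\snr^-(\kappa,\nu,T)$, so that in the definition of $\snr^-(\kappa,\nu,T)$ one may simply take $\varphi:=f$ irrespective of the stationary set $T_0$. The crux is the following extraction lemma, which I would isolate first: \emph{if $\nu$ is a regular uncountable cardinal, $\alpha$ is an ordinal with $\cf(\alpha)=\nu$, and $g\colon\alpha\to\nu$ is a function such that $g^{-1}\{i\}$ is nonstationary in $\alpha$ for every $i<\nu$, then $g$ is strictly increasing on some club in $\alpha$.}

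To prove the lemma, fix for each $i<\nu$ a club $D_i\s\alpha$ disjoint from $g^{-1}\{i\}$, and fix a continuous strictly increasing cofinal map $e\colon\nu\to\alpha$. I would then construct, by recursion on $\xi<\nu$, a strictly increasing continuous sequence $\langle\beta_\xi\mid\xi<\nu\rangle$ cofinal in $\alpha$ such that the values $\eta_\xi:=g(\beta_\xi)$ form a strictly increasing sequence. At a successor step, take $\beta_{\xi+1}$ to be any point of the club $\bigcap_{j\le\eta_\xi}D_j$ (an intersection of fewer than $\cf(\alpha)$ clubs, hence a club) lying above both $\beta_\xi$ and $e(\xi+1)$; since $\beta_{\xi+1}\in D_j$ for all $j\le\eta_\xi$, we get $g(\beta_{\xi+1})\ge\eta_\xi+1>\eta_\xi$. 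At a limit step $\xi$ the value $\beta_\xi:=\sup_{\xi'<\xi}\beta_{\xi'}$ is forced on us, and here is the key observation: putting $\bar\eta:=\sup_{\xi'<\xi}\eta_{\xi'}$, for each $j<\bar\eta$ a tail of the points $\beta_{\xi'+1}$ lies inside $\bigcap_{k\le j}D_k\s D_j$, so by closedness of $D_j$ we obtain $\beta_\xi\in D_j$ and hence $g(\beta_\xi)\ne j$; as $j<\bar\eta$ was arbitrary, $g(\beta_\xi)\ge\bar\eta$, and since $\bar\eta$ is the supremum of a strictly increasing sequence of limit length it strictly exceeds every $\eta_{\xi'}$ with $\xi'<\xi$. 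So $\eta_\xi:=g(\beta_\xi)$ keeps the sequence strictly increasing, and the recursion goes through. Finally, $c:=\{\beta_\xi\mid\xi<\nu\}$ is closed (by continuity of the construction) and cofinal (as $\beta_\xi\ge e(\xi)$), hence a club in $\alpha$, and $g\restriction c$ is strictly increasing by construction.

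Granting the lemma, I would conclude as follows. Using the hypothesis, fix a club $E_0\s\kappa$ disjoint from $T\cap\bigcup_{i<\nu}\Tr(f^{-1}\{i\})$. Since $\nu<\kappa$ and $\kappa$ is regular, the set $\{\sup(f^{-1}\{i\})\mid i<\nu\ \&\ \sup(f^{-1}\{i\})<\kappa\}$ is bounded in $\kappa$, say below $\kappa^*<\kappa$, so $E:=E_0\setminus(\kappa^*+1)$ is still a club. For every $\alpha\in E\cap T$ we have $\cf(\alpha)=\nu$ (as $T\s E^\kappa_\nu$), and for every $i<\nu$ the set $f^{-1}\{i\}\cap\alpha$ is nonstationary in $\alpha$: if $\sup(f^{-1}\{i\})<\kappa$ then $\sup(f^{-1}\{i\})\le\kappa^*<\alpha$, so $f^{-1}\{i\}\cap\alpha$ is bounded in $\alpha$; and if $\sup(f^{-1}\{i\})=\kappa>\alpha$, then $\alpha<\sup(f^{-1}\{i\})$, so $\alpha\notin\Tr(f^{-1}\{i\})$ together with $\cf(\alpha)>\omega$ forces $f^{-1}\{i\}\cap\alpha$ to be nonstationary in $\alpha$. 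Now, to verify $\snr^-(\kappa,\nu,T)$, let a stationary $T_0\s T=T\cap E^\kappa_\nu$ be given and set $\varphi:=f$. Then $T_0\cap E$ is stationary, and for every $\alpha\in T_0\cap E$ the lemma, applied to $g:=f\restriction\alpha$, produces a club $c\s\alpha$ on which $\varphi\restriction c=f\restriction c$ is strictly increasing.

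The step I expect to be the main obstacle is the limit stage of the extraction lemma. Since the point $\beta_\xi$ there is the supremum of the previously chosen points and is therefore not ours to control, one has to engineer the earlier choices so that $\beta_\xi$ automatically inherits a large $g$-value; the mechanism making this work is the closedness of the clubs $D_j$ combined with the fact that $\bar\eta$, as a supremum of a strictly increasing sequence of limit length, strictly dominates every earlier value. The remaining ingredients — the successor-step selections, the verification that $c$ is a club, and the passage from the global hypothesis on $f$ to the local hypothesis of the lemma — are routine.
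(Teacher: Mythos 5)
Your proposal is correct and follows essentially the same route as the paper: both take $\varphi:=f$ and, at each $\alpha\in T$ off the given nonstationary set, use the clubs avoiding the fibers $f^{-1}\{i\}\cap\alpha$ to produce a club on which $f$ is strictly increasing. Your recursive construction at the limit stages is just a hand-unfolded version of the paper's one-line diagonal intersection $\pi[\diagonal_{i<\nu}\pi^{-1}[c_\alpha^i]]$ (which forces $f(\beta)\ge\pi^{-1}(\beta)$ on a club), though you are slightly more careful than the paper about the edge case $\alpha=\sup(f^{-1}\{i\})$.
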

\begin{proof}  Fix such a function $f:\kappa\rightarrow\nu$. Denote $S_i:=f^{-1}\{i\}$. Let $\alpha\in T\setminus\bigcup_{i<\nu}\Tr(S_i)$ be arbitrary.
For each $i<\nu$, fix a club $c_\alpha^i$ in $\alpha$ disjoint from $S_i$. Let $\pi:\nu\rightarrow\alpha$ denote the inverse collapse of some club in $\alpha$,
and let $c_\alpha:=\pi[\diagonal_{i<\nu}\pi^{-1}[c_\alpha^i]]$.
Then $c_\alpha$ is a club in $\alpha$, and, for all $\beta\in c_\alpha$ and $i<\pi^{-1}(\beta)$, we have $\beta\in c^i_\alpha$ so that $f(\beta)\neq i$.
Consequently, $f(\beta)\ge\pi^{-1}(\beta)$ for all $\beta\in c_\alpha$.
Therefore, there exists a club $c_\alpha'\s c_\alpha$ on which $f$ is strictly increasing.
\end{proof}

\begin{prop}\label{prop45} Suppose $\vec f=\langle f_\beta\mid\beta<\lambda^+\rangle$ is a scale for a singular cardinal $\lambda$, and $\nu\in\reg(\lambda)\setminus\{\aleph_0,\cf(\lambda)\}$.
Then $\snr^-(\lambda^+,\nu,E^{\lambda^+}_\nu\cap V(\vec f))$ holds.\footnote{Recall Definition~\ref{def22}.}
\end{prop}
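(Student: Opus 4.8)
The plan is to extract from the very-goodness of the points of $T:=E^{\lambda^+}_\nu\cap V(\vec f)$ a \emph{single} index $i^*<\cf(\lambda)$ and a \emph{single} ordinal $\zeta<\lambda$ of cofinality $\nu$ that govern the scale along the relevant clubs, and then to feed this uniform data into Lemma~\ref{localtoglobal} to manufacture a global function $\varphi:\lambda^+\to\nu$. Throughout, write, for $i<\cf(\lambda)$, $\lambda_i:=\sup\{f_\beta(i)\mid\beta<\lambda^+\}$, so that $\langle\lambda_i\mid i<\cf(\lambda)\rangle$ is a strictly increasing sequence of regular cardinals converging to $\lambda$ and $\vec f$ lives in $\prod_{i<\cf(\lambda)}\lambda_i$.

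First I would fix an arbitrary stationary $T_0\s T$ and, for each $\alpha\in T_0$, attach the least index $i(\alpha)<\cf(\lambda)$ for which there is a club $C_\alpha\s\alpha$ with $f_\gamma<^{i(\alpha)}f_\delta$ whenever $\gamma<\delta$ are from $C_\alpha$; such an index exists since $\alpha$ is very good with respect to $\vec f$. As $\cf(\lambda)<\lambda^+$, there are a stationary $T_1\s T_0$ and a fixed $i^*<\cf(\lambda)$ with $i(\alpha)=i^*$ for every $\alpha\in T_1$. Put $f(\delta):=f_\delta(i^*)$ for $\delta<\lambda^+$; reading the relation $f_\gamma<^{i^*}f_\delta$ at coordinate $i^*$ shows that $f\restriction C_\alpha$ is strictly increasing for each $\alpha\in T_1$. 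Let $\zeta_\alpha:=\sup(f[C_\alpha])\le\lambda_{i^*}<\lambda$. Since $f\restriction C_\alpha$ is strictly increasing and $C_\alpha$ is cofinal in $\alpha$, a routine computation (pushing cofinal subsets of $\alpha$ forward and pulling cofinal subsets of $\zeta_\alpha$ back through $f\restriction C_\alpha$) gives $\cf(\zeta_\alpha)=\cf(\alpha)=\nu$. Applying pigeonhole once more (the map $\alpha\mapsto\zeta_\alpha$ takes fewer than $\lambda^+$ many values), we obtain a stationary $T_2\s T_1$ and an ordinal $\zeta<\lambda$ with $\zeta_\alpha=\zeta$ for all $\alpha\in T_2$; as $\nu\neq\aleph_0$, $\cf(\zeta)=\nu$ is uncountable.

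Finally, I would invoke Lemma~\ref{localtoglobal} with this $\zeta$ to obtain a class map $\psi_\zeta:\ord\to\nu$, and set $\varphi:=\psi_\zeta\circ f$, a function from $\lambda^+$ to $\nu$. Fix $\alpha\in T_2$. Then $\cf(\alpha)=\nu=\cf(\zeta)$, the club $C_\alpha$ is in particular a stationary subset of $\alpha$, and $f\restriction C_\alpha$ is strictly increasing and converging to $\zeta$; so Lemma~\ref{localtoglobal} yields a club $c\s\alpha$ such that $\varphi\restriction(c\cap C_\alpha)=(\psi_\zeta\circ f)\restriction(c\cap C_\alpha)$ is strictly increasing. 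Since $c\cap C_\alpha$ is itself a club in $\alpha$, the point $\alpha$ is as required, and as $T_2$ is a stationary subset of $T_0$, this single $\varphi$ witnesses the desired instance of $\snr^-(\lambda^+,\nu,E^{\lambda^+}_\nu\cap V(\vec f))$.

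I do not anticipate a genuine obstacle: essentially everything is bookkeeping plus the prior Lemma~\ref{localtoglobal}. The one step meriting care is the verification that $\cf(\zeta_\alpha)=\nu$, so that Lemma~\ref{localtoglobal} is applicable; this uses that $f\restriction C_\alpha$ is strictly increasing together with $\nu\neq\aleph_0$. The hypothesis $\nu\neq\cf(\lambda)$ plays no role in the argument beyond the fact that $V(\vec f)\s E^{\lambda^+}_{\neq\cf(\lambda)}$, which is what keeps the target set $E^{\lambda^+}_\nu\cap V(\vec f)$ from being vacuously irrelevant.
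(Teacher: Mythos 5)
Your proposal is correct and follows essentially the same route as the paper: fix a uniform index and a uniform limit $\zeta$ by pigeonhole over the very-good witnesses, then compose the coordinate map $\delta\mapsto f_\delta(i^*)$ with $\psi_\zeta$ from Lemma~\ref{localtoglobal}. Your explicit check that $\cf(\zeta)=\nu$ (hence uncountable, so the lemma applies) is a detail the paper leaves implicit, but there is no substantive difference.
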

\begin{proof}
Let $T_0$ be an arbitrary stationary subset of $E^{\lambda^+}_\nu\cap V(\vec f)$.
For each $\alpha\in T_0$, fix a club $c_\alpha\s\alpha$ and some $i_\alpha<\cf(\lambda)$ such that for any pair $\delta<\gamma$ of ordinals of $c_\alpha$,
$f_\delta<^{i_\alpha} f_\gamma$. Fix a stationary $T_1\s T_0$, and ordinals $i<\cf(\lambda)$, $\zeta<\lambda$ such that, for all $\alpha\in T_1$,
$\langle f_\delta(i)\mid \delta\in c_\alpha\rangle$ is strictly increasing and converging to $\zeta$.
Let $\psi_\zeta$ be given by Lemma~\ref{localtoglobal}.
Define $\varphi:\lambda^+\rightarrow\nu$ by letting $\varphi(\delta):=\psi_\zeta(f_\delta(i))$ for all $\delta<\lambda^+$.
Clearly, for every $\alpha\in T_1$, there exists a club $c_\alpha'\s c_\alpha$ on which $\varphi$ is strictly increasing.
\end{proof}

\begin{cor}\label{cor45} Suppose that $\nu,\lambda$ are cardinals with $\aleph_0<\cf(\nu)=\nu<\cf(\lambda)$.
Then:
\begin{enumerate}
\item $\snr^-(\lambda^+,\nu,E^{\lambda^+}_\nu)$ holds;
\item If $2^\nu\le\lambda$, then  $\prp(\lambda^+,\nu,E^{\lambda^+}_\nu)$ holds.
\end{enumerate}
\end{cor}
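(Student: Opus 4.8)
The plan is to deduce clause~(2) from clause~(1) via Theorem~\ref{thm43}, and to prove clause~(1) by treating separately the case where $\lambda$ is singular and the case where $\lambda$ is regular. For clause~(2), note that since $\nu$ is uncountable we have $\Tr(\lambda^+)=E^{\lambda^+}_{>\aleph_0}\supseteq E^{\lambda^+}_\nu$, so $E^{\lambda^+}_\nu=\Tr(\lambda^+)\cap E^{\lambda^+}_\nu$ is stationary; granting clause~(1), $\snr^-(\lambda^+,\nu,E^{\lambda^+}_\nu)$ holds, and $2^\nu\le\lambda<\lambda^+$, so Theorem~\ref{thm43} applied with $\kappa:=\lambda^+$, $S:=\lambda^+$ and $T:=E^{\lambda^+}_\nu$ yields $\prp(\lambda^+,\nu,E^{\lambda^+}_\nu)$.

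Suppose first that $\lambda$ is singular. Since $\aleph_0<\nu<\cf(\lambda)\le\lambda$, we have $\nu\in\reg(\lambda)\setminus\{\aleph_0,\cf(\lambda)\}$; fixing a scale $\vec f$ for $\lambda$ by Fact~\ref{scales}(\ref{c1}), Proposition~\ref{prop45} gives that $\snr^-(\lambda^+,\nu,E^{\lambda^+}_\nu\cap V(\vec f))$ holds. But recalling that $E^{\lambda^+}_{<\cf(\lambda)}\s V(\vec f)$ and that $\nu<\cf(\lambda)$, we get $E^{\lambda^+}_\nu\s V(\vec f)$, so $E^{\lambda^+}_\nu\cap V(\vec f)=E^{\lambda^+}_\nu$, and hence $\snr^-(\lambda^+,\nu,E^{\lambda^+}_\nu)$ holds, as required.

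Suppose now that $\lambda$ is regular, so that $\nu<\lambda=\cf(\lambda)$; this is the case that requires genuine work, and the plan is to imitate the proof of Theorem~\ref{thm38}. Let $T_0\s E^{\lambda^+}_\nu$ be an arbitrary stationary set. By \cite[Lemma~4.4]{Sh:351}, fix $\langle\Gamma_j\mid j<\lambda\rangle$ with $\bigcup_{j<\lambda}\Gamma_j=E^{\lambda^+}_{<\lambda}$, each carrying a partial square $\langle C^j_\alpha\mid\alpha\in\Gamma_j\rangle$. As $\nu<\lambda$, we have $T_0\s E^{\lambda^+}_{<\lambda}$, so we may fix $j<\lambda$ with $T_0\cap\Gamma_j$ stationary, and then $\varepsilon<\lambda$ (necessarily of cofinality $\nu$) such that $T_0':=\{\alpha\in T_0\cap\Gamma_j\mid\otp(C^j_\alpha)=\varepsilon\}$ is stationary. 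Applying \cite[Lemma~3.1]{paper29} exactly as in the proof of Theorem~\ref{thm38}, but with target order-type $\nu$ in place of $\theta$, fix a postprocessing function $\Phi$ and set $C_\alpha:=\Phi(C^j_\alpha)$ for all $\alpha\in\Gamma_j$; the coherence of the $C^j_\alpha$'s together with the properties $\acc(\Phi(x))\s\acc(x)$ and $\Phi(x)\cap\bar\alpha=\Phi(x\cap\bar\alpha)$ (for $\bar\alpha\in\acc(\Phi(x))$) ensure that $\langle C_\alpha\mid\alpha\in\Gamma_j\rangle$ is a coherent sequence of clubs with $\otp(C_\alpha)=\nu$ for every $\alpha\in T_0'$. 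Finally, define $\varphi:\lambda^+\rightarrow\nu$ by letting $\varphi(\delta):=\otp(C_\delta)$ whenever $\delta\in\Gamma_j$ and $\otp(C_\delta)<\nu$, and $\varphi(\delta):=0$ otherwise. For each $\alpha\in T_0'$, coherence gives $C_\delta=C_\alpha\cap\delta$ for all $\delta\in\acc(C_\alpha)$, so $\varphi\restriction\acc(C_\alpha)$ agrees with the strictly increasing map $\delta\mapsto\otp(C_\alpha\cap\delta)$; since $\acc(C_\alpha)$ is a club in $\alpha$ and $T_0'\s T_0$ is stationary, $\varphi$ witnesses $\snr^-(\lambda^+,\nu,E^{\lambda^+}_\nu)$. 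The main obstacle is precisely this regular case: lacking a scale, one must extract the global function $\varphi$ from the ZFC partial-square technology of \cite{Sh:351}, using the coherent thinning afforded by the postprocessing function of \cite{paper29}; by contrast the singular case is immediate from the already-established Proposition~\ref{prop45}, and the passage to clause~(2) is a direct appeal to Theorem~\ref{thm43}.
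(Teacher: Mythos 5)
Your proposal is correct and follows essentially the same route as the paper: the singular case via Proposition~\ref{prop45} together with $E^{\lambda^+}_\nu\s V(\vec f)$, the regular case by extracting a coherent club sequence of order-type $\nu$ from the partial squares of \cite{Sh:351} via the postprocessing function of \cite{paper29} and setting $\varphi(\delta):=\otp(C_\delta)$, and clause~(2) by a direct appeal to Theorem~\ref{thm43}. The only difference is that you spell out the details of the regular case which the paper compresses into a citation of the proof of Theorem~\ref{thm38}.
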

\begin{proof} (1) If $\lambda$ is singular, then by Fact~\ref{scales}(1), let us fix a scale $\vec f$ for $\lambda$.
As $\nu<\cf(\lambda)$, we have $E^{\lambda^+}_\nu\s V(\vec f)$. Now, appeal to Proposition~\ref{prop45}.

Next, suppose that $\lambda$ is regular.
Let $T_0$ be an arbitrary stationary subset of $E^{\lambda^+}_{\nu}$. As made clear by the proof of Theorem~\ref{thm38}, there exists a sequence $\langle C_\alpha\mid\alpha\in\Gamma\rangle$ such that:
\begin{itemize}
\item  $\Gamma\s\acc(\lambda^+)$;
\item for all $\alpha\in\Gamma$, $C_\alpha$ is a club in $\alpha$;
\item for all $\alpha\in\Gamma$ and $\bar\alpha\in\acc(C_\alpha)$,  $\bar\alpha\in\Gamma$ and $C_{\bar\alpha}=C_\alpha\cap\bar\alpha$;
\item $T_1:=\{\alpha\in\Gamma\cap T_0\mid \otp(C_\alpha)=\nu\}$ is stationary.
\end{itemize}
Now, define $\varphi:\kappa\rightarrow\nu$ by letting $\varphi(\alpha):=\otp(C_\alpha)$ whenever $\alpha\in\Gamma$ and $\otp(C_\alpha)<\nu$; otherwise, let $\varphi(\alpha):=0$.
Then $\langle C_\alpha\mid\alpha\in T_1\rangle$ witnesses that $\varphi$ is as sought.

(2) By Clause~(1) and Theorem~\ref{thm43}.
\end{proof}

We are now ready to derive Theorem~B.
\begin{proof}[Proof of Theorem~B]
If $\lambda$ is a singular cardinal admitting a very good scale, then by Proposition~\ref{prop45}, $\snr^-(\lambda^+,\nu,E^{\lambda^+}_\nu)$ holds.
If $\lambda$ is a regular cardinal, then by Corollary~\ref{cor45}(1), $\snr^-(\lambda^+,\nu,E^{\lambda^+}_\nu)$ holds.
Now, appeal to Theorem~\ref{thm43}.
\end{proof}

\section*{Acknowledgments} 
The main results of this paper were presented by the first author at the \emph{$7^{th}$ European Set Theory Conference},
Vienna, July 2019, and by second author at the \emph{Arctic Set Theory Workshop 4}, Kilpisj\"arvi, January 2019. We thank the organizers and the participants for their feedback.

\end{document}